\documentclass{amsart}
\usepackage{amsfonts,amssymb,amsmath,amsthm,amsbsy}
\usepackage{url}

\usepackage{enumerate}
\usepackage{stmaryrd}
\usepackage{tikz}
\usetikzlibrary{arrows,calc,matrix}
\usepackage{amsrefs}

\vfuzz2pt 
\hfuzz2pt 

\urlstyle{sf}
\newtheorem{thrm}{Theorem}[section]
\newtheorem{lem}[thrm]{Lemma}
\newtheorem{prop}[thrm]{Proposition}

\theoremstyle{definition}
\newtheorem{definition}[thrm]{Definition}

\numberwithin{equation}{section}

\author{S. Madariaga}
\address{Departamento de Matem\'aticas y Computaci\'on, Universidad de
La Rioja, 26004 \\ Lo\-gro\-\~no, Spain}
\email{sara.madariaga@unirioja.es}

\author{Jos\'e M. P\'erez-Izquierdo}
\address{Departamento de Matem\'aticas y Computaci\'on, Universidad de
La Rioja, 26004 \\ Lo\-gro\-\~no, Spain}
\email{jm.perez@unirioja.es}

\thanks{The authors thank support from the Spanish Ministerio de Ciencia e Innovaci\'on (MTM2010-18370-C04-03).}%

\keywords{Loops, Moufang loops, Sabinin algebras, Malcev algebras, Representation theory}
\subjclass[2010]{20N05,17D10}

%
\newcommand{\bm}{\boxminus}
\newcommand{\bp}{\boxplus}
\newcommand{\cB}{\mathbf{IBialg}}
\newcommand{\cC}{\mathbf{C}}

\newcommand{\cL}{\mathbf{Loops}}
\newcommand{\cM}{\mathbf{Moufang}}
\newcommand{\cS}{\mathbf{Sab}}
\newcommand{\cV}{\mathbf{V}}

\newcommand{\fd}{{f.d.}\ }
\newcommand{\field}{\boldsymbol{k}}

\newcommand{\m}{\mathfrak m}
\newcommand{\Oc}{\mathbb O}
\newcommand{\Prim}{\mathop{\mathrm{Prim}}}

\newcommand{\Z}{\mathbb Z}

\newcommand{\x}{\boldsymbol{x}}
\newcommand{\xx}{\boldsymbol{x}}
\newcommand{\yy}{\boldsymbol{y}}
\newcommand{\zz}{\boldsymbol{z}}
\newcommand{\llambda}{\boldsymbol{\lambda}}
\newcommand{\rrho}{\boldsymbol{\rho}}
\newcommand{\rr}{\boldsymbol{r}}
\newcommand{\sss}{\boldsymbol{s}}

\newcommand{\1}{_{(1)}}
\newcommand{\2}{_{(2)}}
\newcommand{\3}{_{(3)}}
\newcommand{\4}{_{(4)}}
\newcommand{\5}{_{(5)}}

%
\DeclareMathOperator{\ad}{\mathrm{ad}}

\DeclareMathOperator{\Atp}{\mathrm{Atp}}

\DeclareMathOperator{\Doro}{\mathcal{D}}
\DeclareMathOperator{\Endo}{\mathrm{End}}
\DeclareMathOperator{\Eq}{\mathrm{Eq}}
\DeclareMathOperator{\gl}{\mathfrak{gl}}

\DeclareMathOperator{\Id}{\mathrm{Id}}
\DeclareMathOperator{\Lie}{\mathcal{L}}

\DeclareMathOperator{\Mult}{\mathrm{Mlt}}
\DeclareMathOperator{\Nalt}{\mathrm{N}_{\mathrm{alt}}}
\DeclareMathOperator{\Na}{\mathrm{N}}

\DeclareMathOperator{\sl2}{\mathrm{sl}}
\DeclareMathOperator{\spann}{\mathrm{span}}
\DeclareMathOperator{\U}{\mathrm{U}}

%
%
\begin{document}

\title{Linear representations of formal loops}

\begin{abstract}
	A representation of an object in a category is an abelian group in the corresponding comma category. 
	In this paper we derive the formulas describing linear representations of objects in the category of formal loops 
	and apply them to obtain a new approach to the representation theory of formal Moufang loops and Malcev algebras 
	based on Moufang elements. Certain `non-associative Moufang symmetry' of groups is revealed.
\end{abstract}

\maketitle

%
%
%

\section{Introduction}

\emph{In this paper the base field $\field$ is assumed to be of characteristic zero.}

\medskip

Finite-dimensional real Lie algebras are the tangent spaces of real Lie groups. 
This crucial result was naturally extended to a non-associative setting using differential geometry \cite{Sabinin1988}. 
Loops play the role of non-associative groups and Sabinin algebras are the new `non-associative' Lie algebras. 
When convergence is not taken into account, local loops are replaced by formal loops in this non-associative Lie correspondence. 
The category of formal loops is equivalent to the category of irreducible unital non-associative bialgebras \cite{Mostovoy2010}. 
From an algebraic point of view this equivalence allows the substitution of bialgebras for loops, 
so instead of studying the Sabinin algebra of a formal loop 
one considers the Sabinin algebra of primitive elements of an irreducible bialgebra, 
as described by Shestakov and Umirbaev \cite{Shestakov2002}. 
The integration of a Sabinin algebra to a local analytic loop is replaced by 
the construction of the universal enveloping algebra of the Sabinin algebra, 
an irreducible unital non-associative bialgebra whose primitive elements form a Sabinin algebra 
which can be identified with the initial Sabinin algebra \cites{Perez-Izquierdo2007,Mostovoy2010}.

Lie algebras are essential to understand linear representations of local Lie groups, 
so it seems reasonable to expect that Sabinin algebras will be equally essential to understand linear representations 
of both local analytic loops and irreducible unital non-associative bialgebras. 
The main obstacle is that while there is a general consensus about what a linear representation of a Lie group is, 
the corresponding notion for loops is not so developed. 
For instance, different approaches to the representation theory of Moufang loops have been proposed \cites{Loginov1993,Dharwadker1995}. 
Sabinin already mentioned the lack of a natural representation theory for loops in his book \cite{Sabinin1999}.

In this paper we conciliate in an algebraic way the work on linear representations of loops \cites{Smith1986,Smith2007} 
with the work on the non-associative Lie correspondence \cites{Sabinin1988,Mostovoy2010}. 
Our goal is to study linear representations of formal loops by considering representations of the corresponding Sabinin algebra 
and conversely, to integrate representations of a Sabinin algebra to representations of its formal loop. 
We chose the approach of J. D. H. Smith to the representation theory of loops since it is based on 
an attractive categorical formulation by J. M. Beck \cite{Beck2003} that turns out to be very illuminating in our context.

We illustrate our techniques in the particular case of Moufang loops. 
Malcev algebras are the Sabinin algebras of local Moufang loops \cites{Maltsev1955,Kuzmin1972}, 
and the representation theory for them is well developed \cites{Kuzmin1968,Carlsson1976,Elduque1990,Elduque1995}. 
This theory is based on a standard definition of bimodule \cite{Eilenberg1948}.  
Malcev algebras include Lie algebras and other algebras such as the algebra of the traceless octonions. 
The theoretical existence of Malcev modules other than Lie modules for Lie algebras suggests certain kind of `Moufang symmetry' on groups. 
\footnote{We use the expression `Moufang symmetry' very loosely without defining it. E. Paal systematically approached this notion  \cites{Paal1990,Paal2008I,Paal2008II,Paal2008III,Paal2008IV,Paal2008V,Paal2008VI,Paal2008VII,Paal2008VIII,Paal2008X,Paal2008XI,Paal2008XII}. While the present work is modeled on the group $\Doro(Q)_e$, Paal's approach is modeled on the group $\Doro(Q)$ 
(see Section \ref{sec:Moufang_modules} for the definition of $\Doro(Q)$).}However, in practice, over algebraically closed fields of characteristic zero the only finite-dimensional simple Lie algebra 
that admits a non-Lie Malcev irreducible module is $\sl2(2,\field)$, and this module is unique and 2-dimensional \cites{Carlsson1976,Elduque1990}. Thus, no hope for that symmetry is left.

In this paper we propose a more general definition of a Malcev representation that we call \emph{relative representation}. 
This notion is the infinitesimal counterpart of the idea of relative representation for Moufang loops that we also introduce. 
We prove that any relative representation of a formal Moufang loop induces a relative representation of its Malcev algebra. 
Conversely, relative representations of Malcev algebras can be formally integrated to relative representations 
of the corresponding formal Moufang loops. 
Relative representations of Moufang loops are based on \emph{Moufang elements} \cite{Phillips2009}, 
i.e., elements $a$ in a loop such that
\begin{displaymath}
	a(x(ay)) = ((ax)a)y \quad \textrm{and} \quad ((xa)y)a = x(a(ya))
\end{displaymath}
for any $x,y$. 
By definition, any element in a Moufang loop is a Moufang element. 
However, Moufang elements might be present in non-Moufang loops and even in that case they always form a Moufang subloop. 
It seems natural to embed elements of Moufang loops as Moufang elements in other loops,
because they behave in the new loop as they do in the original loop. 
However, since groups are examples of Moufang loops, elements of a group might be embedded in other loops 
as Moufang elements without satisfying associativity with respect to the other elements in the loop. 
In this case the group would be exhibiting a Moufang symmetry rather than an associative symmetry. 
It is then apparent that a new approach to the representation theory of Moufang loops and Malcev algebras is required 
to understand this situation, specially since this Moufang symmetry occurs quite often. 
For instance, given a group $G$ and two linear representations $V$ and $W$ of $G$, the set $V \otimes W \times G$ with product
\begin{displaymath}
	\left(\sum v_i \otimes w_i,a\right) \hskip -2pt \left(\sum v'_j \otimes v'_j,b\right) \hskip -2pt 
	=
	\hskip -2pt  \left(\sum b^{-1}a^{-1}bav_i \otimes w_i + \sum b^{-1}a^{-2}b v'_j \otimes a w'_j,ab \right)
\end{displaymath}
is a loop and the subloop $0 \otimes 0 \times G$ isomorphic to $G$ consists of Moufang elements that, in general, 
do not associate with all the other elements.

This paper is structured as follows. 
In Section \ref{sec:Modules_for_loops} we review the representation theory for objects in the category of loops developed by J. D. H. Smith. 
In Section \ref{sec:Modules_Formal_Loops} we extend these results to objects in the category of irreducible unital non-associative bialgebras. 
Although this category is equivalent to the category of formal loops, in practice it is much more natural to work with bialgebras than with formal loops. 
In Section \ref{sec:Moufang_modules} we specialize these results to the case of formal Moufang loops and obtain a new notion of representation 
(\emph{relative representation}) for these loops and for Moufang-Hopf algebras. 
Representations of formal loops induce representations of the corresponding Sabinin algebras. 
In Section \ref{Modules_for_Malcev_algebras_revisited} we define \emph{relative representations} for Malcev algebras 
and classify these representations for central simple Malcev algebras. 
Finally, we prove the equivalence between relative representations of formal Moufang loops and Malcev algebras.

%
%
\section{Modules for loops} \label{sec:Modules_for_loops}
By a \emph{loop} we mean a non-empty set $Q$ endowed with a binary product $xy$ so that $(Q,xy)$ 
has a unit element denoted by $e$ and the left and right multiplication operators $L_x \colon y \mapsto xy$ and $R_x \colon y \mapsto yx$ 
are bijective for all $x \in Q$ \cite{Pflugfelder1990}. 
It is customary to consider two extra binary operations on $Q$, the so called \emph{left} and \emph{right divisions}:
\begin{displaymath}
	x\backslash y := L^{-1}_x(y) \quad \textrm{and} \quad y/x := R^{-1}_x(y).
\end{displaymath}
Clearly
\begin{displaymath}
	x\backslash (xy) = y =x( x \backslash y ), \quad  (yx)/x = y = (y/x)x \quad \textrm{and} \quad x \backslash x = y/y.
\end{displaymath}
These identities characterize loops.
\subsection{Modules for objects in a category with pullbacks}
The notion of module for a loop \cites{Smith1986,Smith2007} is an example of a general categorical definition given by Beck \cite{Beck2003}. 
Let $\cC$ be a category with pullbacks and $B$ an object in $\cC$. 
The \emph{comma category} $\cC \downarrow B$ \cite{MacLane1998} is the category whose objects are arrows $A \stackrel{\pi}{\rightarrow} B$ in $\cC$ 
and whose arrows are commutative diagrams

%
\begin{center}
 \begin{tikzpicture}
  \node(l) at (-1,0.7) {$A'$};
  \node(r) at (1,0.7) {$A''$};
  \node(d) at (0,-0.7) {$B$};

  \draw[->] ($(l.east)$) --  ($(r.west)$);
  \draw[->] ($(l.south)$) -- node[left]{$\pi$} ($(d.north)$);
  \draw[->] ($(r.south)$) -- node[right]{$\pi$} ($(d.north)$);
 \end{tikzpicture}
\end{center}
The identity arrow $B \rightarrow B$ is a terminal object in $\cC \downarrow B$. 
The existence of pullbacks in $\cC$ implies the existence of the product of any two objects 
$A' \stackrel{\pi}{\rightarrow} B$ and $A'' \stackrel{\pi}{\rightarrow} B$ in $\cC \downarrow B$, 
denoted by $A' \times_B A'' \stackrel{\pi}{\rightarrow} B$.

Groups are defined in categories with finite products and terminal objects \cites{Beck2003,MacLane1998}. 
A \emph{representation} of $B$ (or \emph{$B$-module}) is an \emph{abelian group} in $\cC \downarrow B$, 
i.e., an object $A \stackrel{\pi}{\rightarrow} B$ endowed with morphisms
\begin{center}
	\begin{tikzpicture}
		\node(ll) at (-1,0.7) {$A\times_B A$};
		\node(rr) at (1,0.7) {$A$};
		\node(dd) at (0,-0.7) {$B$};
		\node(ll2) at (3,0.7) {$A$};
		\node(rr2) at (5,0.7) {$A$};
		\node(dd2) at (4,-0.7) {$B$};
		\node(ll3) at (8,0.7) {$B$};
		\node(rr3) at (10,0.7) {$A$};
		\node(dd3) at (9,-0.7) {$B$};
		
		\draw[->] ($(ll.east)$) -- node[above]{$\boxplus$} ($(rr.west)$);
		\draw[->] ($(ll.south)$) -- node[left]{$\pi$} ($(dd.north)$);
		\draw[->] ($(rr.south)$) -- node[right]{$\pi$}($(dd.north)$);
		\draw[->] ($(ll2.east)$) -- node[above]{$\boxminus$} ($(rr2.west)$);
		\draw[->] ($(ll2.south)$) -- node[left]{$\pi$} ($(dd2.north)$);
		\draw[->] ($(rr2.south)$) -- node[right]{$\pi$}($(dd2.north)$);
		\draw[->] ($(ll3.east)$) -- node[above]{$0$} ($(rr3.west)$);
		\draw[->] ($(ll3.south)$) -- node[left]{$\Id$} ($(dd3.north)$);
		\draw[->] ($(rr3.south)$) -- node[right]{$\pi$}($(dd3.north)$);
	\end{tikzpicture}
\end{center}
subject to the usual commutative diagrams for the \emph{addition map} $\bp$, the \emph{opposite map} $\bm$ 
and the \emph{zero map} $0$ of any abelian group \cite{Beck2003}.
\subsection{Modules for objects in $\cL$}
The objects of the category $\cL$ are loops and the arrows are homomorphisms of loops. 
The pullback of two arrows $Q_1 \stackrel{f_1}{\rightarrow} Q$ and $Q_2 \stackrel{f_2}{\rightarrow}Q$ is the loop 
$Q_1\times_Q Q_2 = \{(x_1,x_2) \in Q_1 \times Q_2 \mid f_1(x_1) = f_2(x_2)\}$ with arrows given by the projections onto $Q_1$ and $Q_2$. 
The existence of pullbacks in the category $\cL$ leads to a natural notion of representation of a loop. 
In case that our loop satisfies some identities, we can focus on the category given by the variety of loops determined by those identities 
instead of the entire category $\cL$. This leads to a more restrictive representation theory and 
working in $\cL$ provides the general framework to develop the representation theory of loops in other subcategories such as varieties. 
We briefly review the representation theory of objects in $\cL$. A very well-written exposition can be found in \cites{Smith1986,Smith2007}.

Given a module $(E\stackrel{\pi}{\rightarrow} Q, \bp, \bm, 0)$ for an object $Q$ in $\cL$,  the commutative diagram
\begin{center}
	\begin{tikzpicture}
		\node(ll) at (-1,0.7) {$Q$};
		\node(rr) at (1,0.7) {$E$};
		\node(dd) at (0,-0.7) {$Q$};
		
		\draw[->] ($(ll.east)$) -- node[above]{$0$} ($(rr.west)$);
		\draw[->] ($(ll.south)$) -- node[left]{$\Id$} ($(dd.north)$);
		\draw[->] ($(rr.south)$) -- node[right]{$\pi$}($(dd.north)$);
	\end{tikzpicture}
\end{center}
says that the exact sequence $\ker (\pi) \rightarrowtail E \stackrel{\pi}{\twoheadrightarrow} Q$ splits. 
The \emph{fiber} over $a \in Q$ is the set  $E_a := \{ x \in E \mid \pi(x) = a\}$. 
The fiber $E_e = \ker(\pi)$ is a normal subloop of $E$. 
The image of $a$ under $0$ will be denoted by $0_a$. 
We also denote elements from $E_a$ by $x_a, y_a,\dots$. 
The set $0_Q = \{ 0_a \mid a \in Q\}$ is a subloop of $E$ isomorphic to $Q$. 
Any $x \in E$ can be written as $x = (x/0_{\pi(x)})0_{\pi(x)}$ and $(x/0_{\pi(x)}) \in E_e$, thus the map
\begin{align*}
	E_e \times Q &\rightarrow  E\\
	(x_e,a) &\mapsto x_e 0_a
\end{align*}
is bijective. 
Since $\bp$ and $\bm$ preserve fibers, the commuting diagrams satisfied by $\bp, \bm$ and $0$ 
in the definition of abelian group in $\cL \downarrow Q$ imply that $E_a$ is an abelian group with respect to the addition map
\begin{displaymath}
	(x_a,y_a) \mapsto x_a + y_a :=\bp(x_a,y_a),
\end{displaymath}
the opposite $-x_a$ of $x_a$ being $\bm(x_a)$, and with zero element $0_a$. 
The map $\bp$ is a homomorphism of loops, hence $(x_e + y_e)0_a = \bp(x_e,y_e)\bp(0_a,0_a) = \bp(x_e0_a,y_e0_a) = x_e0_a + y_e0_a$. 
This shows that all the fibers $E_a$ are isomorphic as abelian groups. 
In fact, if we write $Q[\boldsymbol{x}]$ for the free product of $Q$ and the free loop on one generator $\boldsymbol{x}$, 
then the group $\U(Q;\cL)$ generated by the left and right multiplication operators $L_a,R_a \colon Q[\boldsymbol{x}] \rightarrow Q[\boldsymbol{x}]$ 
by elements $a \in Q$ acts on $E$ by $L_ax :=0_{a}x$ and $R_ax:=x0_{a}$, inducing isomorphisms between the fibers. 
This defines an action of the subgroup $\U(Q;\cL)_e := \{\phi \in \U(Q;\cL) \mid \phi (e) = e\}$ on the abelian group $E_e$, 
and so the abelian group $E_e$ is a $\U(Q;\cL)_e$-module. 
Under the identification of $E$ with $E_e \times Q$, the maps $\bp$, $\bm$ and $0$ correspond to
\begin{equation} \label{eq:plusminuszero}
	 ((x_e,a),(y_e,a)) \stackrel{\bp}{\mapsto} (x_e+y_e,a),\quad (x_e,a) \stackrel{\bm}{\mapsto} (-x_e,a) 
	 \quad\text{and}\quad  
	 a \stackrel{0}{\mapsto} (0_e,a)	
\end{equation}	
respectively. We can transport the loop structure of $E$ to $E_e \times Q$.  We have that
\begin{align*}
	(x_e0_a)(y_e0_b)  &= \bp(x_e0_a,0_a)\bp(0_b,y_e0_b) = \bp((x_e0_a)0_b,0_a(y_e0_b)) \\
	&= \bp((((x_e0_a)0_b)/0_{ab})0_{ab},((0_a(y_e0_b))/0_{ab})0_{ab})\\
	&= \bp(((x_e0_a)0_b)/0_{ab},(0_a(y_e0_b))/0_{ab})\bp(0_{ab},0_{ab})\\
	&= \left( ((x_e0_a)0_b)/0_{ab}+(0_a(y_e0_b))/0_{ab}\right) 0_{ab}.
\end{align*}
Therefore, if we consider the elements
\begin{equation}
\label{eq:rs}
	r(a,b) := R^{-1}_{ab}R_bR_a \quad \text{and} \quad s(a,b):=R^{-1}_{ab}L_aR_b
\end{equation}
in $\U(Q;\cL)_e$, the product on $E$ corresponds to
\begin{equation}
\label{eq:Smith}
(x_e,a)(y_e,b) :=(r(a,b)x_e + s(a,b)y_e, ab)
\end{equation}
on $E_e \times Q$. 
This process can be reversed: from any abelian group $E_e$ and any action of $\U(Q;\cL)_e$ regarded as an automorphism of $E_e$ 
one can obtain a $Q$-module $(E_e \times Q \stackrel{\pi}{\rightarrow}Q, \bp,\bm,0)$ by (\ref{eq:plusminuszero}) and (\ref{eq:Smith}), 
proving that $Q$-modules are equivalent to $\U(Q;\cL)_e$-modules.

\subsection{Modules for loops in a variety}
The representation theory for loops in a variety $\cV$ is an example of the representation theory of objects in $\cL$. 
As before, a $Q$-module $(E \stackrel{\pi}{\rightarrow} Q, \bp, \bm, 0)$ for a loop $Q$ in a variety $\cV$, 
determines an abelian group structure on the fiber $E_e$ over $e$ and an action of the group $\U(Q;\cL)_e$ by automorphisms on $E_e$ 
so we can recover $(E \stackrel{\pi}{\rightarrow} Q, \bp, \bm, 0)$ from the abelian group $E_e$ and the action of $\U(Q;\cL)_e$. 
In fact, a better choice than $\U(Q;\cL)_e$ is natural in this context. 
If $Q*\cV[\boldsymbol{x}]$ denotes 
the free product in the variety $\cV$ of the loop $Q$ and the free group $\cV[\boldsymbol{x}]$ on one generator $\boldsymbol{x}$, 
then the group $\U(Q;\cV)$ generated by the left and right multiplication operators 
$L_a, R_a \colon Q*\cV[\boldsymbol{x}] \rightarrow Q*\cV[\boldsymbol{x}]$ by elements $a \in Q$ acts on $E$ 
(note that $E$ is a loop in the variety $\cV$) 
and the subgroup $\U(Q;\cV)_e := \{ \phi \in \U(Q;\cV) \mid \phi(e)= e\}$ acts on $E_e$ as automorphisms. 
It is then natural to replace $\U(Q;\cL)_e$ by $\U(Q;\cV)_e$. 
However, when recovering a $Q$-module from a $\U(Q;\cV)_e$-module, 
we have to check that the loop $E_e \times Q$ given by (\ref{eq:Smith}) belongs to $\cV$. 
This might not happen because it is equivalent to the vanishing of the action of certain elements of the group algebra $\Z\U(Q;\cL)_e$. 
Therefore, the representation theory of a loop $Q$ in a variety $\cV$ is equivalent to 
the representation theory of a quotient of the group algebra $\Z\U(Q;\cV)_e$ instead of to 
the representation theory of $\Z\U(Q;\cV)_e$ itself. 
See Section 10.5 in \cite{Smith2007} for the details.
%
%
\section{Modules for formal loops} \label{sec:Modules_Formal_Loops}
The goal of this section is to specialize the notion of representation in the sense of Beck
to the category of formal loops following the work of Smith. 
Since it is more natural to work in the equivalent category of irreducible unital non-associative bialgebras, we will do so. 
The theory of formal loops needed for this approach has been developed in \cite{Mostovoy2010}.

Recall \cites{Sweedler1969,Abe1980} that a \emph{coalgebra} $(C,\Delta, \epsilon)$ is a vector space $C$ equipped with two linear operations 
$\Delta\colon C \rightarrow C \otimes C$ (\emph{comultiplication}) and $\epsilon \colon C \rightarrow \field$ (\emph{counit}) so that
\begin{displaymath}
	\sum \epsilon(u\1) u\2 = u = \sum \epsilon(u\2)u,
\end{displaymath}
where $\sum u\1 \otimes u\2$  is the usual Sweedler's notation for $\Delta(u)$. 
The coalgebra $C$ is called \emph{coassociative} in case that $\Delta \otimes \Id = \Id \otimes \Delta$
and it is called \emph{cocommutative} if $\sum u\1 \otimes u\2 = \sum u\2 \otimes u\1$ for all $u \in C$. 
Coassociativity implies that the element
\begin{displaymath}
	\sum u\1 \otimes u\2 \otimes \cdots \otimes u_{(n+1)} := \sum u\1 \otimes \cdots \otimes \Delta(u_{(i)}) \otimes \cdots \otimes u_{(n)}
\end{displaymath}
is well defined (it does not depend on the position $i$ where we apply $\Delta$) 
\footnote{All the coalgebras that we consider in this paper are coassociative and cocommutative 
so no special care about the subindices in these tensors is required.}. 
A \emph{coalgebra morphism} between the coalgebras $(C, \Delta, \epsilon)$ and $(C', \Delta', \epsilon')$ 
is a linear map $\theta \colon C \rightarrow C'$ that satisfies $(\theta \otimes \theta) \Delta = \Delta' \theta$ and $\epsilon'\theta = \epsilon$. 
A \emph{unital bialgebra} $(A, \Delta, \epsilon, \mu, \eta)$ is a coalgebra $(A, \Delta, \epsilon)$ with two extra linear maps 
$\mu \colon A \otimes A \rightarrow A$ (\emph{multiplication} or \emph{product}) and $\eta \colon \field \rightarrow A$ (\emph{unit}) so that
\begin{displaymath}
	\eta(\alpha) u = \alpha u = u \eta(\alpha) \quad \forall \alpha \in \field,\, u \in A,
\end{displaymath}
where $uv$ stands for $\mu(u \otimes v)$, and both $\mu$ and $\eta$ are coalgebra morphisms. 
Note that $A \otimes A$ is a coalgebra with structure maps $u \otimes v \mapsto \sum (u\1 \otimes v \1) \otimes (u\2 \otimes v\2)$ 
and $u \otimes v \mapsto \epsilon(u)\epsilon(v)$, and we can regard the base field $\field$ as a coalgebra 
with $\Delta \colon \alpha \mapsto \alpha 1 \otimes 1$ and $\epsilon \colon \alpha \mapsto \alpha$). 
The image of $1 \in \field$ by $\eta$ is denoted by $1$ and is the unit element of $A$. 
The adjectives cocommutative or coassociative apply to unital bialgebras in accordance with the properties of the underlying coalgebra. 
The paradigm of coalgebra in this paper is the symmetric algebra $\field[V]$ of a vector space $V$. 
This commutative algebra is a coalgebra $(\field[V],\Delta, \epsilon)$ (moreover a unital bialgebra) with the structure maps determined by
\begin{equation} \label{eq:symmetric}
	\Delta(a) := a \otimes 1 + 1 \otimes a \quad \textrm{and} \quad \epsilon(a) = 0
\end{equation}
for any $a \in V$ and extending them to homomorphisms of unital algebras $\Delta \colon \field[V] \rightarrow \field[V] \otimes \field[V]$ 
and $\epsilon \colon \field[V] \rightarrow \field$. 
The vector space $V$ is recovered in the coalgebra $\field[V]$ as the space of \emph{primitive elements}, i.e., 
those $a \in \field[V]$ such that $\Delta(a) = a \otimes 1 + 1 \otimes a$. 
The term \emph{irreducible unital bialgebra} is used in this paper to designate these unital bialgebras $(A,\Delta, \epsilon, \mu, \nu)$ 
whose underlying coalgebra $(A,\Delta,\epsilon)$ is isomorphic to a coalgebra $\field[V]$ where $V = \Prim A$. 
Since $\field 1$ is a simple subcoalgebra of these bialgebras, this is equivalent to $(A,\Delta,\epsilon)$ being irreducible as a coalgebra \cite{Sweedler1969}. 
Observe that we do not assume associativity in the definition of (unital) bialgebra. 
In fact, most of the bialgebras that appear in this paper are \emph{not necessarily associative} (also called \emph{non-associative}) 
and the reader should implicitly assume that.
\emph{Hopf algebras} $H$ are associative unital bialgebras with a linear map $S \colon H \rightarrow H$, the \emph{antipode}, 
satisfying $\sum S(u\1)u\2 = \epsilon(u) = \sum u\1 S(u\2)$. 
Since $\sum S(u\1)(u\2 v) = \epsilon(u)v = \sum (vu\1)S(u\2)$, the antipode ensures some sort of cancellative property in $H$. 
This is no longer true for general bialgebras, but irreducible unital bialgebras are rather friendly \cite{Perez-Izquierdo2007}. 
Any such bialgebra $A$ always have two extra bilinear maps (coalgebra morphisms) $\backslash$ (\emph{left division}) 
and $/$ (\emph{right division}) such that
\begin{displaymath}
	\sum u\1 \backslash (u\2 v) = \epsilon(u)v = \sum u\1(u\2 \backslash v)
\end{displaymath}
and
\begin{displaymath}
	\sum (vu\1)/u\2 = \epsilon(u)v = \sum (v/u\1)u\2.
\end{displaymath}
This can be proved by induction using the \emph{coradical filtration}, i.e., the usual filtration by degree on the symmetric algebra $\field[\Prim A]$ 
when we look at $A$ as $\field[\Prim A]$.  
The elements $u\backslash v$ and $v/u$ can be obtained as linear combinations of iterated products of $v$ and elements taken from $\{ u\1, u\2,u\3,\dots\}$ 
so any sub-bialgebra inherits these divisions too. 
In some cases, such as Hopf algebras or in general Hopf quasigroups \cite{Klim2010}, $u\backslash v = S(u)v$ and $u/v = uS(v)$ for some map $S$ 
that could be rightfully called \emph{antipode}, but that is not always the case for arbitrary unital irreducible bialgebras.
%
\subsection{Formal loops}
%
Let $V$ be a $\field$-vector space and $\field[V]$ be the symmetric algebra of $V$. 
There exits a canonical isomorphism between $\field[V]\otimes \field[W]$ and $\field[V \times W]$ 
so that $V \otimes 1 + 1 \otimes W$ is identified with $V \times W$. 
We will consider the usual coalgebra structure (\ref{eq:symmetric}) on $\field[V]$, the space of primitive elements $\Prim \field[V]$ being $V$. 
Coalgebra morphisms $\field[V] \rightarrow \field[W]$ are encoded in formal maps. 
A \emph{formal map} from a $\field$-vector space $V$ to a $\field$-vector space $W$ is a linear map $\theta \colon \field[V] \rightarrow W$
with $\theta(1) = 0$. Any formal map $\theta \colon \field[V] \rightarrow W$ induces a unique coalgebra morphism 
$\theta'\colon \field[V] \rightarrow \field[W]$ with $\pi_W \theta' = \theta$, where $\pi_W$ denotes the projection of $\field[W]$ onto $W$. 
The precise formula for $\theta'$ is
\begin{displaymath}
	\theta'(u) = \sum_{n=0}^\infty \frac{1}{n!} \, \theta(u_{(1)})\cdots \theta(u_{(n)}) = \epsilon(u)1 + \theta(u) + \cdots,
\end{displaymath}
where the product is the usual associative multiplication on $\field[V]$. 
A \emph{formal loop} on $V$ is a formal map $F \colon \field[V \times V] \rightarrow V$
with $F\vert_{\field[V]\otimes 1} = \pi_V = F\vert_{1\otimes \field[V]}$. 
Therefore, a formal loop is equivalent to a unital bialgebra product $F' \colon \field[V]\otimes \field[V] \rightarrow \field[V]$.
The formal loop is recovered as $F = \pi_V F'$. A \emph{homomorphism} of formal loops $F\colon \field[V \times V] \rightarrow V$ 
and $H \colon \field[W \times W] \rightarrow W$ is a formal map $\theta \colon \field[V] \rightarrow W$ that verifies
\begin{displaymath}
	H'(\theta'(u) \otimes \theta'(v)) = \theta' (F'(u \otimes v))
\end{displaymath}
for any $u,v \in \field[V]$. 
Homomorphisms between formal loops correspond to homomorphisms between bialgebras. 
Moreover, the correspondence $F \mapsto F'$ and $\theta \mapsto \theta'$ gives
\begin{prop}\cite[Proposition 2.6]{Mostovoy2010}
	The category of formal loops and the category of irreducible unital bialgebras are equivalent.
\end{prop}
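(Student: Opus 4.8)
The plan is to realize the correspondence $F\mapsto F'$, $\theta\mapsto\theta'$ as a functor $\mathcal{F}$ from the category of formal loops to the category of irreducible unital bialgebras, and to prove that $\mathcal{F}$ is fully faithful and essentially surjective; any functor with these two properties is an equivalence. On objects $\mathcal{F}$ sends a formal loop $F\colon\field[V\times V]\to V$ to $(\field[V],\Delta,\epsilon,F',\eta)$, which is irreducible unital since its underlying coalgebra is $\field[V]$ with $\Prim\field[V]=V$; on morphisms it sends a homomorphism $\theta$ to the induced coalgebra morphism $\theta'$. Essential surjectivity is then almost immediate from the definition adopted in the excerpt: an irreducible unital bialgebra $A$ is equipped with a coalgebra isomorphism $\iota\colon\field[\Prim A]\to A$, and transporting the multiplication along $\iota$ gives a product $F':=\iota^{-1}\mu(\iota\otimes\iota)$ on $\field[\Prim A]$; since $\iota(1)=1$ and $\eta$ is a two-sided unit for $\mu$, one gets $F'(u\otimes 1)=u=F'(1\otimes u)$, so $F:=\pi_{\Prim A}F'$ is a formal loop on $\Prim A$ with $\mathcal{F}(\Prim A,F)\cong A$ via $\iota$.

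The heart of the argument is the bijection between formal maps $V\to W$ and coalgebra morphisms $\field[V]\to\field[W]$. Existence and uniqueness of $\theta'$ for a given formal map $\theta$, with $\pi_W\theta'=\theta$, is granted in the excerpt. For the reverse direction, let $\phi\colon\field[V]\to\field[W]$ be any coalgebra morphism; since $\field[W]$ is irreducible and $\epsilon(\phi(1))=1$, the element $\phi(1)$ is the unique grouplike $1$, so $\pi_W\phi$ is a formal map. Both $\phi$ and $(\pi_W\phi)'$ are coalgebra morphisms whose composite with $\pi_W$ equals $\pi_W\phi$, hence by the uniqueness clause $\phi=(\pi_W\phi)'$. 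Thus $\theta\mapsto\theta'$ is a bijection with inverse $\phi\mapsto\pi_W\phi$, and in particular $\theta\mapsto\theta'$ is injective, giving faithfulness.

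It remains to check functoriality and fullness, both of which reduce to repeated appeals to the same uniqueness clause. Composition in the category of formal loops is $\psi\ast\theta:=\pi_U\circ\psi'\circ\theta'$ for $\theta\colon V\to W$, $\psi\colon W\to U$, so that $(\psi\ast\theta)'=\psi'\circ\theta'$ (the right-hand side being a coalgebra morphism with the correct projection), and the identity of $V$ is $\pi_V$, with $(\pi_V)'=\Id_{\field[V]}$; hence $\mathcal{F}$ preserves composition and identities. For fullness, note $\theta'(1)=1$ always, so $\theta'$ automatically respects units; therefore $\theta'$ is a bialgebra morphism $(\field[V],F')\to(\field[W],H')$ if and only if it respects products, i.e.\ $H'(\theta'\otimes\theta')=\theta'F'$, which is exactly the condition that $\theta$ be a homomorphism of formal loops. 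Conversely every bialgebra morphism $g\colon(\field[V],F')\to(\field[W],H')$ is in particular a coalgebra morphism, hence $g=\theta'$ with $\theta=\pi_Wg$, and its compatibility with products is the homomorphism condition for $\theta$. So $\mathcal{F}$ is bijective on hom-sets. Being fully faithful and essentially surjective, $\mathcal{F}$ is an equivalence.

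The only genuinely substantive ingredient is the uniqueness-and-reconstruction statement for coalgebra morphisms out of $\field[V]$, which the excerpt quotes rather than proves; if one had to establish it here, the proof is an induction along the degree (coradical) filtration of $\field[V]$, using that a homogeneous $u$ of degree $n$ satisfies $\Delta(u)=u\otimes 1+1\otimes u+(\text{terms of strictly lower degree in each tensor factor})$, so that a coalgebra morphism is forced on degree $n$ by its values on lower degrees, recovered from $\pi_W$ composed with it. Everything else above is bookkeeping with Sweedler notation, and I expect this reconstruction lemma to be the only real obstacle — and it is already available to us.
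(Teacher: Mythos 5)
Your proposal is correct and follows exactly the correspondence $F\mapsto F'$, $\theta\mapsto\theta'$ that the paper sets up; the paper itself gives no proof but simply cites \cite{Mostovoy2010}, and your verification (fully faithful plus essentially surjective, with everything reduced to the existence–uniqueness of the coalgebra morphism $\theta'$ lifting a formal map $\theta$) is the standard argument behind that citation. The minor points you gloss over (e.g.\ that a coalgebra isomorphism $\field[\Prim A]\to A$ necessarily sends $1$ to $1_A$, since $1_A$ is the unique grouplike of an irreducible bialgebra) are easily justified, so there is no gap.
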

Under this equivalence the left and right divisions on a bialgebra correspond to the left and right divisions on the formal loop 
(see \cite{Mostovoy2010}), so these divisions are natural operations for bialgebras.

Identities in formal loops and bialgebras were considered in \cite{Mostovoy2010}. 
An identity of a loop $Q$ is an equality of two maps from $Q \times \cdots \times Q$ to $Q$ expressible in terms of the structure maps of $Q$. 
Note the unit element $e$ can be regarded as a $0$-ary operation and the multiplication and the left and right divisions as binary operations. 
For formal loops, identities are related to the equality of formal maps from $\field[V \times \cdots \times V]$ to $V$, 
while for coalgebras they are related to the equality of coalgebra morphisms from $\field[V] \otimes \cdots \otimes \field[V]$ to $\field[V]$. 
The following notation \cite{Mostovoy2010} works similarly for identities in loops and formal loops. 
The projection $\pi_{V_i} \colon \field[V_i] \rightarrow V_i$ is denoted by $\x_i$, 
the zero map $\field[V_i] \rightarrow V_i$ is denoted by $\boldsymbol{e}$ or $ \boldsymbol{0}$ and, 
given formal maps $\theta_i \colon \field[U_i] \rightarrow V_i$ $1 \leq i \leq n$ and $G \colon \field[V_1 \times \cdots \times V_n] \rightarrow W$, 
$G(\theta_1,\dots, \theta_n)$ stands for $G \circ \theta'_1 \otimes \cdots \theta'_n$. 
Although the maps $G$ and $G(\x_1,\dots, \x_n)$ are the same ($\boldsymbol{x}'_i$ is the identity map on $\field[V_i]$), 
the latter expression is much more in accordance with the standard notation in loop theory. 
If $F \colon \field[V \times V] \rightarrow V$ is a formal loop, we write $\boldsymbol{x}\boldsymbol{y}$ instead of $F$. 
To push further this connection between identities in loops and formal loops, 
we need to define what we mean by $G(\x_1,\dots, \x_i,\dots, \x_i,\dots, \x_n)$, i.e. when we allow repeated occurrences of $\x_i$. 
The simplest case is when $V_1 = \cdots = V_n$. In this case the notation $G(\x,\dots, \x)$ stands for the formal map 
$k[V] \rightarrow V$ given by $G(u) = \sum G(u\1 \otimes \cdots \otimes u_{(n)})$. 
One defines similarly $G(\x_{i_1} ,\dots,  \x_{i_n})$ when there are various groups of repeated indices among the $i_k$. 
For example, with this notation, the left and right Moufang identities for formal loops are
\begin{displaymath}
	\boldsymbol{x}(\boldsymbol{y}(\boldsymbol{x}\boldsymbol{z})) = ((\boldsymbol{x}\boldsymbol{y})\boldsymbol{x})\boldsymbol{z} 
	\quad \textrm{and}\quad 
	((\boldsymbol{z}\boldsymbol{x})\boldsymbol{y})\boldsymbol{x} = \boldsymbol{z}(\boldsymbol{x}(\boldsymbol{y}\boldsymbol{x})),
\end{displaymath}
which look like the usual Moufang identities, although they are artificial. 
These expressions are just a convenient way of representing the following equalities of coalgebra morphisms 
$\field[V] \otimes \field[V] \otimes \field[V] \rightarrow \field[V]$ (or their projection onto $V$):
\begin{displaymath}
\sum z\1 (u (z \2 v)) = \sum ((z\1u)z\2)v \quad \textrm{and} \quad \sum ((vz\1)u)z\2 = \sum v(z\1(u z\2))
\end{displaymath}
for all $u,v,z \in \field[V]$. 
The reader should keep in mind that with this notation identities on loops translate verbatim to formal loops, 
but that they represent \emph{multilinear} identities on bialgebras, and the use of the comultiplication and the counit is mandatory: 
just duplicating or removing elements from the identities induces the loss of the multilinearity. 
This approach to identities on bialgebras is motivated by the interpretation of these algebraic structures 
as distributions with support at a point of local analytic loops \cites{Perez-Izquierdo2007,Mostovoy2010}.
\subsection{The category of irreducible unital bialgebras}
Let $\cB$ be the category of irreducible unital $\field$-bialgebras. This category is known to be equivalent to the category $\cS$ of Sabinin algebras \cite{Mostovoy2010}, certain variety of algebras in the sense of universal algebra. Thus, many properties of $\cB$ such that the existence of finite products, terminal objects, equalizers and zero morphisms  are inherited from this equivalence. However, the functor from $\cS$ to $\cB$, i.e., the construction of universal enveloping algebras for Sabinin algebras, is far from being trivial \cite{Perez-Izquierdo2007}. So, for the convenience of the reader, we will provide a brief description of all categorical objects needed in $\cB$.

The \emph{product} of two objects $A_1$ and $A_2$ in $\cB$ is the tensor product $A_1 \otimes A_2$ with the projections
\begin{displaymath}
	\pi_1 \colon u \otimes v \mapsto \epsilon(v) u  \quad \text{and}\quad \pi_2 \colon u \otimes v \mapsto \epsilon(u) v.
\end{displaymath}
The \emph{equalizer} $\Eq(f,g)$ of two parallel arrows $A \stackrel{f}{\rightarrow} B$ and $A \stackrel{g}{\rightarrow} B$ in $\cB$ is the arrow $\Eq(f,g) \stackrel{eq}{\rightarrow} A$ where
\begin{displaymath}
	\Eq(f,g) := \sum_{\substack{C \text{ sub-coalgebra of } A \\ f\vert_C = g\vert_C}} C  \quad \text{and} \quad eq := f\vert_{\Eq(f,g)} = g \vert_{\Eq(f,g)}.
\end{displaymath}
The existence of finite products and equalizers ensures the existence of pullbacks in $\cB$. The \emph{pullback} of two arrows $A_1 \stackrel{\pi_1}{\rightarrow} B$ and $A_2 \stackrel{\pi_2}{\rightarrow} B$ consists of the bialgebra
\begin{displaymath}
	A_1 \otimes_B A_2 := \Eq(\pi_1\otimes \epsilon 1, \epsilon 1 \otimes \pi_2),
\end{displaymath}
i.e., the sum of all subcoalgebras $C$ of $A \otimes B$ such that $\pi_1\otimes \epsilon 1\vert_C = \epsilon 1 \otimes \pi_2\vert_C$, and the arrows $A_1 \otimes_B A_2 \stackrel{p_1}{\rightarrow} A_1$ and $A_1 \otimes_B A_2 \stackrel{p_2}{\rightarrow} A_2$ given by $\sum u_i \otimes v_i \mapsto \sum \epsilon(v_i)u_i$ and $\sum u_i \otimes v_i \mapsto \sum \epsilon(u_i) v_i$ respectively. The arrow
\begin{displaymath}
	A_1 \otimes_B A_2 \stackrel{\pi}{\rightarrow} B \quad \text{with} \quad \pi := \pi_1 p_1 = \pi_2 p_2
\end{displaymath}
is an object in the category $\cB \downarrow B$. In fact, this object, together with $p_1$ and $p_2$, is the product of the objects $A_1 \stackrel{\pi_1}{\rightarrow} B$ and $A_2 \stackrel{\pi_2}{\rightarrow} B$ in $\cB \downarrow B$. Since $B \stackrel{\Id}{\rightarrow} B$ is a \emph{terminal object} in $\cB \downarrow B$, the  usual properties of products in categories \cite{MacLane1998} show that
\begin{prop}
	Given arrows $A_i \rightarrow B$ ($i=1,2,3$) and $B \stackrel{\Id}{\rightarrow} B$ in $\cB$, we have that in $\cB \downarrow B$
	\begin{enumerate}
		\item $A_1 \otimes_B A_2 \cong A_2 \otimes_B A_1$,
		\item $(A_1 \otimes_B A_2) \otimes_B A_3 \cong A_1 \otimes_B(A_2 \otimes_B A_3)$ and
		\item $A_1 \otimes_B B\cong A_1 \cong B \otimes_B A_1$.
	\end{enumerate}
\end{prop}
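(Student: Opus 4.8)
The plan is to read off, from the discussion immediately preceding the statement, that $A_1 \otimes_B A_2 \stackrel{\pi}{\rightarrow} B$ together with the projections $p_1, p_2$ is \emph{the} categorical product of $A_1 \stackrel{\pi_1}{\rightarrow} B$ and $A_2 \stackrel{\pi_2}{\rightarrow} B$ in $\cB \downarrow B$, and that $B \stackrel{\Id}{\rightarrow} B$ is a terminal object there. Once this is granted, all three statements become completely formal: in any category, binary products (when they exist) are commutative and associative up to a canonical isomorphism, and a terminal object is a two-sided unit for the product. So the proof consists only of writing down the relevant universal maps and checking, via uniqueness in the universal property, that they are mutually inverse; no computation inside the bialgebras is required.

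For (1), I would apply the universal property of $A_2 \otimes_B A_1$ to the cone $(A_1 \otimes_B A_2;\, p_2,\, p_1)$, obtaining a morphism $\sigma \colon A_1 \otimes_B A_2 \rightarrow A_2 \otimes_B A_1$ in $\cB \downarrow B$, and symmetrically a morphism $\sigma' \colon A_2 \otimes_B A_1 \rightarrow A_1 \otimes_B A_2$. Since both $\sigma'\sigma$ and $\Id$ compose with $p_1, p_2$ to give $p_1, p_2$, uniqueness forces $\sigma'\sigma = \Id$, and likewise $\sigma\sigma' = \Id$. (Concretely $\sigma$ is the restriction to $A_1 \otimes_B A_2$ of the flip $\sum u_i \otimes v_i \mapsto \sum v_i \otimes u_i$, but this need not be exhibited.) For (2), I would note that iterating the binary product produces a ternary product $A_1 \otimes_B A_2 \otimes_B A_3$ in $\cB \downarrow B$, characterized by carrying three compatible projections; both $(A_1 \otimes_B A_2)\otimes_B A_3$ and $A_1 \otimes_B (A_2 \otimes_B A_3)$ come equipped with such a triple of projections (compose each outer projection with the relevant inner one), so each satisfies the universal property of the ternary product, and hence they are canonically isomorphic by the same uniqueness argument.

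For (3), the key observation is that the projection $p_1 \colon A_1 \otimes_B B \rightarrow A_1$ is itself an isomorphism in $\cB \downarrow B$. Indeed, given any object $X \stackrel{\tau}{\rightarrow} B$ and any morphism $f \colon X \rightarrow A_1$ over $B$, terminality of $B \stackrel{\Id}{\rightarrow} B$ shows that the only morphism $X \rightarrow B$ over $B$ is $\tau$, so there is a unique morphism $X \rightarrow A_1 \otimes_B B$ over $B$ whose composite with $p_1$ is $f$; this says that $(A_1 \otimes_B B,\, p_1)$ solves the same universal problem as $(A_1,\, \Id_{A_1})$, whence $p_1$ is an isomorphism. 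The argument for $p_2 \colon B \otimes_B A_1 \rightarrow A_1$ is identical (or deduced from (1)). (Explicitly, the inverse of $p_1$ is $u \mapsto \sum u\1 \otimes \pi_1(u\2)$, but this is not needed.)

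I do not expect a genuine obstacle here: the entire content of the proposition lies in the already-completed verification that $A_1 \otimes_B A_2$ is the categorical product in $\cB \downarrow B$ and that $B \stackrel{\Id}{\rightarrow} B$ is terminal. The only point deserving a little care is to argue throughout via these universal properties, rather than via the concrete formulas, so that the resulting isomorphisms are automatically morphisms in $\cB \downarrow B$ --- i.e. they commute with the structure maps down to $B$ --- with no separate check.
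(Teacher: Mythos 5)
Your proposal is correct and follows exactly the paper's route: the paper also derives all three isomorphisms purely from the facts that $A_1 \otimes_B A_2$ with $p_1,p_2$ is the categorical product in $\cB \downarrow B$ and that $B \stackrel{\Id}{\rightarrow} B$ is terminal, citing the standard properties of products in categories rather than computing inside the bialgebras. You merely spell out the universal-property bookkeeping that the paper leaves to the reference.
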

The map $x\mapsto \epsilon(x)1$ is the \emph{zero morphism} $0_{AB}$ from $A$ to $B$ in $\cB$, hence this category has zero morphisms and equalizers. The \emph{kernel} of any morphism $A \stackrel{f}{\rightarrow} B$ is defined as $\ker(f) := \Eq(f,0_{AB})$, i.e.
\begin{displaymath}
	\ker(f) = \sum_{\substack{C \text{ is a sub-coalgebra of } A \\ f\vert_C = \epsilon 1\vert_C}} C
\end{displaymath}

\subsection{Modules for objects in $\cB$}
\label{subsec:ModulesForBialgebras}
Any abelian group $(E\stackrel{\pi}{\rightarrow} Q, \bp, \bm, 0)$ in $\cL \downarrow Q$ can be recovered from a structure of abelian group on the fiber $E_e = \ker (\pi)$ over $e$ as $E_e \times Q$ with structure maps given by (\ref{eq:plusminuszero}) and (\ref{eq:Smith}). We will extend this result to abelian groups in $\cB \downarrow B$ where $B$ is a fixed object in $\cB$. The kernel in $\cB$ of the arrow $A \stackrel{\pi}{\rightarrow} B$ will be denoted by $F(A)$. Since $F(A)$ is a sub-bialgebra of $A$, then it is also closed under the left and right division on $A$, i.e.
\begin{displaymath}
	F(A)F(A) + F(A)/F(A)+ F(A)\backslash F(A) \subseteq F(A).
\end{displaymath}
Also notice that for all $x \in F(A)$
\begin{displaymath}
\pi(x) = \epsilon(x) 1.
 \end{displaymath}
\subsubsection{Sections}
Although in general $(A \stackrel{\pi}{\rightarrow} B, \bp, \bm, 0)$ will denote an abelian group in $\cB \downarrow B$, however our initial results do not require that much structure but just the existence of a \emph{section} (of $\pi$), i.e., a homomorphism of bialgebras $B \stackrel{0}{\rightarrow} A$ such that $\pi \circ 0 = \Id_B$. The image of $b \in B$ by $B \stackrel{0}{\rightarrow} A$ can be denoted by $0_b$ or $0(b)$ but to avoid the awkward use of $0$ in our formulas we will identify $0(b)$ with $b$, so we will omit the map $0$ and we will freely write $xb \in A$ for $x \in A$ and $b \in B$. In the same way we will write $\pi(b) = b$ instead of $\pi0(b) = b$.
%
\begin{prop}
	We have that
	\begin{displaymath}
		F(A) = \left\lbrace \sum u_{(1)}/\pi(u_{(2)}) \mid u \in A \right\rbrace = \left\lbrace \sum \pi(u_{(1)}) \backslash u_{(2)} \mid u \in A \right\rbrace.
	\end{displaymath}
\end{prop}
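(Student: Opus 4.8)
The plan is to prove the two claimed descriptions of $F(A)$ by mutual inclusion, exploiting that $F(A) = \ker(\pi)$ is the sum of all subcoalgebras $C$ of $A$ on which $\pi$ agrees with the zero morphism $x \mapsto \epsilon(x)1$, together with the division identities $\sum u\1 \backslash (u\2 v) = \epsilon(u)v = \sum (vu\1)/u\2$ that hold in any irreducible unital bialgebra. First I would verify the easy inclusion: given $x \in F(A)$, we have $\pi(x) = \epsilon(x)1$, and since $0$ is a section (so that $1 \in A$ is the image of $1 \in B$ and $\epsilon(x)1 = \pi(x) \in 0(B)$ is being identified with $\pi(x) \in B$), the element $\sum x\1 / \pi(x\2)$ should collapse to $\sum x\1/\epsilon(x\2)1 = \sum x\1 \epsilon(x\2) = x$, using that division by $1$ is trivial. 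Hence $F(A) \subseteq \{\sum u\1/\pi(u\2) \mid u \in A\}$, and symmetrically for the other set.

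For the reverse inclusion I would take an arbitrary $u \in A$ and show the coalgebra generated by $\sum u\1/\pi(u\2)$ lies in $F(A)$, i.e. that $\pi$ sends it into $\field 1$ in the appropriate sense. The key computation is to apply $\pi$: since $\pi$ is a bialgebra homomorphism and $\pi(b) = b$ under our identification, $\pi\big(\sum u\1/\pi(u\2)\big) = \sum \pi(u\1)/\pi(u\2)$, which by the right-division identity in $B$ equals $\sum \epsilon(u\1)1 \cdots$ — more precisely, writing $b = \pi(u) \in B$ and using $\sum (v b\1)/b\2 = \epsilon(b)v$ with $v = 1$, one gets $\sum \pi(u\1)/\pi(u\2) = \epsilon(u)1$. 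To conclude membership in $F(A)$ one wants the whole subcoalgebra, not just the element, so I would observe that the map $u \mapsto \sum u\1/\pi(u\2)$ is a coalgebra morphism $A \to A$ (composition of $\Delta$, $\Id \otimes (\pi$ then division$)$, all coalgebra morphisms since $/$ is a coalgebra morphism and $\pi$ is too), hence its image is a subcoalgebra, and $\pi$ restricted to that image is $\epsilon 1$ by the computation just sketched; therefore the image sits inside $\ker(\pi) = F(A)$.

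The main obstacle I anticipate is bookkeeping around the identification of the section $0$ with an inclusion: one must be careful that $\pi(u\2)$ denotes an element of $B$ but is simultaneously being treated as an element of $A$ via $0$, so that the division $u\1/\pi(u\2)$ in $A$ makes sense, and that $\pi$ is then genuinely multiplicative/division-preserving on these elements. Concretely the identity $\pi(x/y) = \pi(x)/\pi(y)$ is what makes everything work and it must be justified from $\pi$ being a bialgebra homomorphism plus the fact (stated in the excerpt) that divisions are obtained as universal expressions in the product and comultiplication, hence preserved by homomorphisms. A secondary point to handle cleanly is that $\sum x\1/\pi(x\2) = x$ for $x \in F(A)$ really does reduce to the counit axiom $\sum x\1 \epsilon(x\2) = x$, which requires knowing $v/1 = v$; this follows from the defining identity $\sum (v/u\1)u\2 = \epsilon(u)v$ specialized to $u = 1$. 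Once these two lemmas are in place the proposition follows by the inclusion argument above, and the second equality is entirely parallel using left division.
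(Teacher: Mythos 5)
Your proposal is correct and follows essentially the same route as the paper's proof: the easy inclusion via $\sum x\1/\pi(x\2)=x$ for $x\in F(A)$ (using $v/1=v$ and that $F(A)$ is a subcoalgebra), and the reverse inclusion by checking that $\{\sum u\1/\pi(u\2)\mid u\in A\}$ is a subcoalgebra on which $\pi$ coincides with $\epsilon 1$, via $\sum\pi(u\1)/\pi(u\2)=\epsilon(u)1$. Your extra care about $\pi$ commuting with the divisions (and the identification of $0(B)$ with $B$) is a legitimate refinement of a step the paper leaves implicit, not a different argument.
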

\begin{proof}
	For any element $x \in F(A)$, $\pi(x) = \epsilon(x)1$. Hence, since $F(A)$ is a sub-coalgebra, $$\sum x\1 /\pi(x\2) = \sum \epsilon(x\2) x\1/1 = x/1 = x.$$ This proves that $F(A) \subseteq C$ with $C:=\left\lbrace \sum u_{(1)}/\pi(u_{(2)}) \mid u \in A \right\rbrace$. Conversely, for any $u \in A$ we have that
	\begin{displaymath}
		\Delta\left(\sum u\1 / \pi(u\2)\right) = \sum u\1 / \pi(u\2) \otimes u\3 / \pi(u\4)
	\end{displaymath}
	so $C$ is a sub-coalgebra of $A$. Moreover,
	\begin{align*}
		\pi\left(\sum u\1 /\pi(u\2)\right) &= \sum \pi(u\1) / \pi(u\2) \\
		&= \epsilon(u)1 = \epsilon\left(\sum u\1 /\pi(u\2)\right)1.
	\end{align*}
	implies that $C \subseteq F(A)$. The other equality can be proved in a similar way.
\end{proof}

\begin{prop}
\label{prop:factorization}
	The maps
	\begin{displaymath}
		\begin{array}{c@{\ }l@{\ }c}
			F(A) \otimes B & \rightarrow & A \\
			x \otimes b & \mapsto & xb
		\end{array}
		\quad\text{and}\quad
		\begin{array}{c@{\ }l@{\ }c}
					B \otimes F(A) & \rightarrow & A \\
					b \otimes x & \mapsto & bx
				\end{array}
		\end{displaymath}
	are isomorphisms of coalgebras.
\end{prop}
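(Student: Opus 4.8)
The plan is to exhibit, for each of the two maps, an explicit two‑sided linear inverse; since each map is readily seen to be a morphism of coalgebras, and the set‑theoretic inverse of a bijective coalgebra morphism is again a coalgebra morphism, this suffices. I will treat $m\colon F(A)\otimes B\to A$, $x\otimes b\mapsto xb$, in detail; the map $b\otimes x\mapsto bx$ is symmetric. Here, by the convention fixed just before the statement, $xb$ means $x\cdot 0(b)$. That $m$ is a coalgebra morphism is immediate: it equals $\mu_A\circ(\iota\otimes 0)$, where $\iota\colon F(A)\hookrightarrow A$ is the inclusion and $0\colon B\to A$ the section, and $\mu_A$, $\iota$, $0$ are all coalgebra morphisms, as are tensor products and composites of such.

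The candidate inverse, read off from the description of $F(A)$ in the previous proposition, is
\begin{displaymath}
	\phi\colon A\to F(A)\otimes B,\qquad u\mapsto \sum\big(u\1/\pi(u\2)\big)\otimes\pi(u\3),
\end{displaymath}
where the division is computed in $A$ (so there $\pi(u\2)$ abbreviates $0(\pi(u\2))$) while the second tensor factor is kept in $B$. By that proposition the first tensor factor lies in $F(A)$, so $\phi$ does take values in $F(A)\otimes B$.

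First I would verify $\phi\circ m=\Id_{F(A)\otimes B}$: starting from the iterated coproduct $\sum x\1b\1\otimes x\2b\2\otimes x\3b\3$ of $xb$, and using that $F(A)=\ker(\pi)$ is a subcoalgebra (so $\pi(x\2)=\epsilon(x\2)1$ and $\pi(x\3)=\epsilon(x\3)1$) together with the multiplicativity of $\pi$ and $\pi\circ 0=\Id_B$, the scalars $\epsilon(x\2),\epsilon(x\3)$ contract the $x$‑part back to $x$, leaving $\sum\big(x\cdot 0(b\1)\big)/0(b\2)\otimes b\3$; the division identity $\sum(vu\1)/u\2=\epsilon(u)v$ then collapses this to $x\otimes b$. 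Next I would verify $m\circ\phi=\Id_A$, that is,
\begin{displaymath}
	\sum\big(u\1/(0\pi)(u\2)\big)\,(0\pi)(u\3)=u.
\end{displaymath}
Since $0\pi$ is a coalgebra morphism, coassociativity rewrites $\sum u\1\otimes(0\pi)(u\2)\otimes(0\pi)(u\3)$ as $\sum u\1\otimes\Delta\big((0\pi)(u\2)\big)$, so the left‑hand side is the image of $\sum u\1\otimes(0\pi)(u\2)$ under the linear map $v\otimes w\mapsto\sum(v/w\1)w\2$; the division identity $\sum(v/w\1)w\2=\epsilon(w)v$ turns it into $\sum\epsilon\big((0\pi)(u\2)\big)u\1=\sum\epsilon(u\2)u\1=u$. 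Hence $m$ is bijective, so it is an isomorphism of coalgebras, and the symmetric argument — with inverse $u\mapsto\sum\pi(u\1)\otimes\big(\pi(u\2)\backslash u\3\big)$ — does the same for $b\otimes x\mapsto bx$.

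I expect the only real friction to be bookkeeping rather than mathematics: one must keep straight which occurrence of $\pi(u\2)$ or $\pi(u\3)$ is being read in $B$ and which is tacitly $0(\pi(\cdot))\in A$, and must avoid invoking an associativity of $A$ that is not available when manipulating products of the form $xb$. All of the genuine content is already packaged in the two division identities recalled in the excerpt and in the fact that $F(A)$ is a subcoalgebra on which $\pi$ restricts to $x\mapsto\epsilon(x)1$.
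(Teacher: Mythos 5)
Your proof is correct and is essentially the paper's argument in a different packaging: your explicit inverse $u\mapsto\sum\bigl(u\1/\pi(u\2)\bigr)\otimes\pi(u\3)$ is exactly the composite of maps ($\Delta$, $\Id\otimes\pi$, comultiplication, $/\otimes\Id$) that the paper applies to prove injectivity, and your identity $m\circ\phi=\Id_A$ is the paper's surjectivity identity $u=\sum\bigl(u\1/\pi(u\2)\bigr)\pi(u\3)$. The only differences are presentational (two-sided inverse versus injective-plus-surjective, plus your explicit remark that a bijective coalgebra morphism has a coalgebra-morphism inverse), so nothing of substance is changed.
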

\begin{proof}
	We will only prove that the first map is a linear isomorphism since it obviously is a morphism of coalgebras. Any $u \in A$ can be written as
	\begin{displaymath}
		u = \sum (u\1/\pi(u\2))\pi(u\3) \in F(A)B
	\end{displaymath}
	so $A = F(A)B$. This proves the surjectivity. To prove the injectivity, assume that $\sum x_i b_i = 0$ for some $\sum x_i \otimes b_i \in F(A) \otimes B$. Then, $\sum_i {x_i}\1 {b_i}\1 \otimes {x_i}\2 {b_i}\2 = 0 \otimes 0$, so applying $\Id \otimes \pi$ we get
	\begin{displaymath}
		\sum_i x_i {b_i}\1 \otimes {b_i}\2 = 0 \otimes 0.
	\end{displaymath}
	Using the comultiplication $\Delta$ we obtain that
	\begin{displaymath}
		\sum_i x_i {b_i}\1 \otimes {b_i}\2 \otimes {b_i}\3 = 0 \otimes 0 \otimes 0.
	\end{displaymath}
	Finally, we apply $/ \otimes \Id$ to get
	\begin{displaymath}
		\sum_i x_i \otimes b_i = 0 \otimes 0.
	\end{displaymath}
\end{proof}
\begin{prop}
\label{prop:F_AxA}
	Let $A_i \stackrel{\pi}{\rightarrow} B$ $i = 1, 2$ be objects in $\cB \downarrow B$ and $B \stackrel{0}{\rightarrow} A_i$ be sections $i=1,2$. Then
	\begin{enumerate}
	\item $0 \colon b \mapsto \sum b\1 \otimes b\2$ is a section of $A_1 \otimes_B A_2 \stackrel{\pi}{\rightarrow} B$,
	\item $F(A_1 \otimes_B A_2) = F(A_1) \otimes F(A_2)$ and
	\item  the following map is an isomorphism of coalgebras
			\begin{align*}
				F(A_1) \otimes F(A_2) \otimes B & \rightarrow  A_1 \otimes_B A_2\\
				x \otimes y \otimes b & \mapsto  \sum x  b\1 \otimes y b\2.
			\end{align*}
	\end{enumerate}
\end{prop}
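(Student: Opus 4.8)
The plan is to verify the three assertions in the order given, using the description of $A_1 \otimes_B A_2$ as the sum of all subcoalgebras $C$ of $A_1 \otimes A_2$ on which $\pi_1 \otimes \epsilon 1$ and $\epsilon 1 \otimes \pi_2$ agree, together with Propositions \ref{prop:factorization} and the characterization of $F(A)$ already established. For (1), I would take the diagonal map $b \mapsto \sum b\1 \otimes b\2$ (i.e. the comultiplication of $B$ composed with the sections, which we have agreed to suppress from the notation), check that its image lands in $A_1 \otimes_B A_2$ by computing $(\pi_1 \otimes \epsilon 1)(\sum b\1 \otimes b\2) = \sum \pi(b\1)\epsilon(b\2)1 = \pi(b) = \sum \epsilon(b\1) 1 \otimes \pi(b\2) = (\epsilon 1 \otimes \pi_2)(\sum b\1 \otimes b\2)$, that it is a bialgebra homomorphism (immediate, since $\Delta_B$ is one and the $0$'s are), and that composing with $\pi = \pi_1 p_1$ returns $\Id_B$, which is the counit axiom for $B$. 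This is routine bookkeeping with Sweedler notation.

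For (2) I would first show $F(A_1) \otimes F(A_2) \subseteq F(A_1 \otimes_B A_2)$: an elementary tensor $x \otimes y$ with $x \in F(A_1)$, $y \in F(A_2)$ satisfies $\pi_1(x) = \epsilon(x)1$ and $\pi_2(y) = \epsilon(y)1$, so $(\pi_1 \otimes \epsilon 1)(x\otimes y) = \epsilon(x)\epsilon(y)1\otimes 1 = (\epsilon 1 \otimes \pi_2)(x \otimes y)$, hence $F(A_1)\otimes F(A_2)$ is a subcoalgebra of $A_1 \otimes A_2$ on which the two maps agree and therefore sits inside $A_1 \otimes_B A_2$; moreover $\pi$ sends it to $\epsilon(x)\epsilon(y)1$, so it lies in the kernel $F(A_1 \otimes_B A_2)$. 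For the reverse inclusion I would use the formula from the Proposition before last: any $w \in F(A_1 \otimes_B A_2)$ equals $\sum w\1 / \pi(w\2)$, and I would push this through the coalgebra isomorphism $A_1 \otimes_B A_2 \cong A_1 \otimes A_2$ — writing $w = \sum a_i \otimes c_i$ and using that $\pi(w) = \epsilon(w) 1 \otimes 1$ forces, after applying $\Id \otimes \pi_2$ and the injectivity argument of Proposition \ref{prop:factorization} applied in each factor, the components to land in $F(A_1)$ and $F(A_2)$ respectively. Concretely: apply $(/ \otimes \Id)\otimes(/\otimes\Id)$ style manipulations to $\sum w\1/\pi(w\2)$ to separate the two tensor legs, each leg producing an element of the form $\sum a\1 / \pi(a\2) \in F(A_i)$ by the cited characterization.

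For (3), the map $x \otimes y \otimes b \mapsto \sum x b\1 \otimes y b\2$ is manifestly a coalgebra morphism; to see it is bijective I would factor it as the composite of the isomorphism $F(A_1)\otimes F(A_2) \otimes B \to F(A_1 \otimes_B A_2) \otimes B$ (using part (2) and the diagonal section from part (1)) followed by the isomorphism $F(C)\otimes B \to C$ of Proposition \ref{prop:factorization} with $C = A_1 \otimes_B A_2$ — one checks that this composite, applied to $x \otimes y \otimes b$, is $(x \otimes y)\cdot(\sum b\1 \otimes b\2) = \sum x b\1 \otimes y b\2$, where the product is the componentwise product inherited by the subbialgebra $A_1 \otimes_B A_2$ from $A_1 \otimes A_2$. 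Since both factors of the composite are isomorphisms of coalgebras, so is the map in (3).

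The main obstacle I anticipate is the reverse inclusion in (2): the other steps are formal consequences of Proposition \ref{prop:factorization} and the section from (1), but showing that an arbitrary element of $F(A_1 \otimes_B A_2)$ actually decomposes with both tensor legs in the respective kernels requires carefully iterating the comultiplication and the division maps in the two factors simultaneously — essentially re-running the injectivity computation of Proposition \ref{prop:factorization} one factor at a time while keeping track of the coassociativity bookkeeping. Once (2) is in hand, (1) and (3) are short.
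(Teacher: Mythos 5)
Your proposal is correct and follows essentially the same route as the paper: the forward inclusion in (2) is the identical subcoalgebra check, the reverse inclusion rests on the same device of rewriting each tensor leg as $\sum u\1/\pi(u\2)$ via the division characterization of $F(A_i)$, and (1) and (3) are obtained, as in the paper, from a routine verification and from Proposition \ref{prop:factorization} combined with (2). The only blemish is the phrase ``coalgebra isomorphism $A_1\otimes_B A_2\cong A_1\otimes A_2$'' --- this is only an inclusion of a subcoalgebra, which is in fact all your argument uses.
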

\begin{proof}
	(1) is obvious and (3) is a consequence of Proposition \ref{prop:factorization} and (2).
	To show (2), note that $F(A_1) \otimes F(A_2)$ is a subcoalgebra of $A_1 \otimes A_2$ on which 
	the restriction of $\epsilon1 \otimes \pi_2$,  $\pi_1 \otimes \epsilon 1$ and $\epsilon 1 \otimes \epsilon 1$ agree, 
	so we have $F(A_1) \otimes F(A_2) \subseteq F(A_1 \otimes_B A_2)$. 
	Conversely, for any $\sum x_i \otimes y_i \in F(A_1 \otimes_B A_2)$
	we have $\sum \pi(x_i) \otimes  \pi(y_i) = \sum \epsilon(x_i) \epsilon(y_i) 1 \otimes 1$. 
	Thus, 
	$\sum x_i \otimes y_i = \sum {x_i}\1/\epsilon({x_i}\2) \otimes {y_i}\1 / \epsilon({y_i}\2) 
	= 
	\sum {x_i}\1/\pi_1({x_i}\2) \otimes {y_i}\1 / \pi_2({y_i}\2) \in F(A_1) \otimes F(A_2)$.
\end{proof}

In the rest of this section we assume that $(A \stackrel{\pi}{\rightarrow} B, \bp,\bm,0)$ is an abelian group in $\cB \downarrow B$. 
In particular,
\begin{displaymath}
	\bp\left( \sum {x_b}\1 \otimes b\2 \right) = \bp \left( \sum {x_b}\1 \otimes 0_{b\2} \right)=x_b = \bp \left( \sum b\1 \otimes {x_b}\2 \right)
\end{displaymath}
for any $x_b$ with $\pi(x_b)=b$. 
In the case that $b = 1$ we get $\bp(x \otimes 1) = x = \bp(1 \otimes x)$ for any $x \in F(A)$. In case that $x = 0_b$ we obtain that
\begin{equation}
\label{eq:zero_element}
	\bp\left( \sum b\1 \otimes b\2 \right) = b.
\end{equation}

\subsubsection{Modules for objects in $\cB$}
The following results lead to the description of the product of $A$ in terms of the products of $F(A)$ and $B$. 
\begin{prop}
	For any $x, x' \in F(A)$ we have that
	\begin{displaymath}
		xx' = x \bp x'.
	\end{displaymath}
	So $F(A)$ is an associative and commutative sub-bialgebra of $A$ isomorphic to the symmetric algebra of $\Prim F(A)$.
\end{prop}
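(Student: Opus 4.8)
The plan is to deduce the identity $xx' = x\bp x'$ directly from the fact that the addition $\bp$ is a homomorphism of bialgebras together with the normalization $\bp(x\otimes 1)=x=\bp(1\otimes x)$ recorded just above, and then to read off commutativity, associativity and the symmetric-algebra description by restricting the abelian-group axioms to $F(A)$ and using that $F(A)$ is an irreducible unital sub-bialgebra of $A$.

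First I would set up a factorization inside the pullback. By Proposition~\ref{prop:F_AxA}(2) (with $A_1=A_2=A$) the element $x\otimes x'$ lies in $F(A)\otimes F(A)=F(A\otimes_B A)$, and the elements $x\otimes 1$ and $1\otimes x'$ lie in $A\otimes_B A$ as well, since the subcoalgebras $F(A)\otimes\field 1$ and $\field 1\otimes F(A)$ equalize $\pi\otimes\epsilon 1$ and $\epsilon 1\otimes\pi$. Because the product of $A\otimes_B A$ is the componentwise product inherited from $A\otimes A$, we have $(x\otimes 1)(1\otimes x')=x\otimes x'$. Now $\bp\colon A\otimes_B A\to A$ is an arrow of $\cB\downarrow B$, hence a homomorphism of bialgebras and in particular multiplicative, so
\[
	x\bp x'=\bp(x\otimes x')=\bp\bigl((x\otimes 1)(1\otimes x')\bigr)=\bp(x\otimes 1)\,\bp(1\otimes x')=x\,x'.
\]
Since $F(A)F(A)\subseteq F(A)$, this shows that the bialgebra product of $F(A)$ is internal and coincides on $F(A)\otimes F(A)$ with the restriction of $\bp$.

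Next I would transport the abelian-group axioms of $\bp$ to this product. Restricting the commutativity axiom $\bp\circ\sigma=\bp$ to $F(A)\otimes F(A)$, where the symmetry $\sigma$ of $A\otimes_B A$ is the flip, gives $xx'=x'x$; restricting the associativity axiom to $F(A)^{\otimes 3}=F(A\otimes_B A\otimes_B A)$, where by iterating Proposition~\ref{prop:F_AxA} the maps $\bp\otimes_B\Id$ and $\Id\otimes_B\bp$ become $\bp\otimes\Id$ and $\Id\otimes\bp$, gives $(xx')x''=x(x'x'')$. Finally, $F(A)$ is a subcoalgebra of the irreducible coalgebra $A$, hence irreducible, and it contains $1$, so it is an irreducible unital bialgebra that we have just shown to be commutative and associative; being a connected bialgebra it carries an antipode automatically, so it is a commutative cocommutative connected Hopf algebra over a field of characteristic zero, and the corresponding structure theorem identifies it with the symmetric algebra on $\Prim F(A)$, both as an algebra and as a coalgebra.

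The only genuinely delicate point is the bookkeeping with the pullback: verifying that, on the relevant $F$-pieces, the symmetry $\sigma$ and the structure maps $\bp\otimes_B\Id$, $\Id\otimes_B\bp$ of $A\otimes_B A$ and $A\otimes_B A\otimes_B A$ really restrict to the naive flip and to $\bp\otimes\Id$, $\Id\otimes\bp$; this is exactly what Proposition~\ref{prop:F_AxA} is designed to supply. The computation $xx'=x\bp x'$ itself is then a one-liner, and the concluding identification with a symmetric algebra is a standard fact in characteristic zero.
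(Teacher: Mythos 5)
Your proposal is correct and follows essentially the same route as the paper: the identity $xx'=x\bp x'$ comes from multiplicativity of $\bp$ applied to $(x\otimes 1)(1\otimes x')=x\otimes x'$ together with the normalization $\bp(x\otimes 1)=x=\bp(1\otimes x)$, commutativity and associativity come from the abelian-group axioms, and the symmetric-algebra identification is the standard structure theorem (the paper cites Cartier--Milnor--Moore in Sweedler, which is the same fact you invoke). Your extra care with the pullback bookkeeping via Proposition~\ref{prop:F_AxA} just makes explicit what the paper leaves implicit.
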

\begin{proof}
	Since $\bp$ is a homomorphism of bialgebras,
	\begin{align*}
		\bp(x \otimes x') &= \bp(x 1 \otimes 1x') = \bp((x \otimes 1)(1 \otimes x')) \\
		&= \bp(x \otimes 1) \bp(1 \otimes x') = xx'.
	\end{align*}
	Associativity and commutativity follow from the axioms of abelian group satisfied by $(A\stackrel{\pi}{\rightarrow} B, \bp, \bm, 0)$. 
	The Cartier-Milnor-Moore theorem \cite[Theorem 13.0.1]{Sweedler1969} implies that $F(A)$ is isomorphic to the symmetric algebra of $\Prim F(A)$.
\end{proof}
In order to avoid the occurrence of confusing parentheses in our formulas, we will use $\cdot$ as a separator to denote the product. 
For example, $xb \cdot x'b'$ represents the element $(xb)(x'b')$. 
The statement of Proposition \ref{prop:product} illustrates the convenience of this notation.
\begin{prop} \label{prop:distributive}
	For any $x,x' \in F(A)$ and $b \in B$ we have that
	\begin{displaymath}
		(xx')b = \bp\left(\sum  xb\1 \otimes x'b\2\right) \quad \text{and} \quad b(xx') = \bp\left(\sum b\1 x \otimes b\2 x'\right).
	\end{displaymath}
\end{prop}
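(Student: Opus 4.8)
The plan is to mimic the computation carried out earlier for ordinary loops (the displayed chain of equalities leading to \eqref{eq:Smith}), but now at the level of coalgebra morphisms, exploiting that $\bp$ is a homomorphism of bialgebras and that elements of $F(A)$ commute and associate among themselves via the previous propositions. The key observation is \eqref{eq:zero_element}, which says $\bp(\sum b\1 \otimes b\2) = b$, i.e.\ the section $0$ is compatible with $\bp$, together with the identity $\bp(x \otimes 1) = x = \bp(1 \otimes x)$ for $x \in F(A)$.

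First I would write $(xx')b$ using \eqref{eq:zero_element} and the fact that $\bp$ is a coalgebra morphism: since $\Delta(b) = \sum b\1 \otimes b\2$ and $xx' \in F(A)$, one has
\begin{displaymath}
	(xx')b = (xx')\,\bp\!\left(\sum b\1 \otimes b\2\right) = \bp\!\left(\sum (xx')\,b\1 \otimes b\2\right),
\end{displaymath}
where the last step uses that $\bp$ is multiplicative together with $\Delta(xx') = \sum (xx')\1 \otimes (xx')\2$ being absorbed appropriately — more precisely, one multiplies $xx' = (xx')\cdot 1$ inside $\bp$. Then I would split $xx'$ across the two tensor slots: using $\bp(x \otimes 1) = x$, $\bp(1 \otimes x') = x'$ and multiplicativity of $\bp$,
\begin{displaymath}
	\bp\!\left(\sum (xx')\,b\1 \otimes b\2\right)
	= \bp\!\left(\sum (x\cdot 1)(1 \cdot x')\,b\1 \otimes b\2\right)
	= \bp\!\left(\sum x\,b\1 \otimes x'\,b\2\right),
\end{displaymath}
after comultiplying $b$ once more to $\sum b\1 \otimes b\2 \otimes b\3$ and distributing, and using that $xb\1 \otimes x'b\2$ lies in $F(A_1)\otimes F(A_2)$-type factorizations — here $A_1 = A_2 = A$ and the relevant bookkeeping is exactly Proposition \ref{prop:factorization}. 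The second identity $b(xx') = \bp(\sum b\1 x \otimes b\2 x')$ follows by the symmetric argument, multiplying on the left.

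The step I expect to be the main obstacle is the careful handling of the comultiplications when moving $x$ and $x'$ into separate tensor factors: one must check that $\sum x\,b\1 \otimes x'\,b\2 = \sum (xx')(b\1 \otimes b\2)$ as elements computed through $\bp$, which requires that the product $(x \otimes 1)(1 \otimes x') = x \otimes x'$ in the coalgebra $A \otimes A$ behaves as expected and that $\bp$ respects this. This is where cocommutativity and coassociativity of $A$ (noted as standing assumptions in the excerpt) are used silently, and where the separator notation $\cdot$ earns its keep; the verification is routine Sweedler-notation manipulation once the structure of $\bp$ as a bialgebra morphism $A \otimes_B A \to A$ is invoked, with no genuine difficulty beyond bookkeeping.
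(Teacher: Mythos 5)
Your argument is correct and is essentially the paper's own proof read in the opposite direction: everything reduces to the factorization $\sum xb\1 \otimes x'b\2 = (x\otimes x')\bigl(\sum b\1 \otimes b\2\bigr)$ in $A\otimes_B A$, multiplicativity of $\bp$, the identity $\bp(x\otimes x')=xx'$ for $x,x'\in F(A)$, and \eqref{eq:zero_element}. The detour through comultiplying $b$ a second time and invoking Proposition \ref{prop:factorization} is unnecessary (and cocommutativity plays no real role); the paper simply writes $\bp\bigl(\sum xb\1\otimes x'b\2\bigr)=\bp(x\otimes x')\,\bp\bigl(\sum b\1\otimes b\2\bigr)=(xx')b$.
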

\begin{proof}
	Since $\bp$ is a homomorphism,
	\begin{align*}
		\bp\left(\sum x b\1 \otimes x' b\2 \right) &= \bp \left((x \otimes x')\left(\sum b\1 \otimes b\2\right)\right) \\
		&= \bp(x\otimes x') \bp\left(\sum b\1 \otimes b\2\right) = (xx')b,
	\end{align*}
	where the last equality follows from (\ref{eq:zero_element}).
\end{proof}
To express the product of $A$ in terms of the products of $F(A)$ and $B$ we need some of the following maps from $A$ to $A$:
\begin{align} \label{eq:isotropy}
	 \nonumber l(b,b')u &:= \sum (b\1 b'\1) \backslash (b\2 \cdot b'\2 u),\\
	 \nonumber r(b,b')u &:= \sum (u b\1 \cdot b'\1) / (b\2 b'\2),\\
	  t(b)u &:= \sum b\1 \backslash (u b\2) ,\\
	 \nonumber s(b,b')u &:= \sum (b\1 \cdot u b'\1)/(b\2 b'\2), \\
	 \nonumber \bar{s}(b,b')u &:= \sum (b\1 b'\1) \backslash (b\2 u \cdot b'\2) \quad \text{and} \\
	 \nonumber \bar{t}(b)u &:= \sum (b\1 u)/b\2,
\end{align}
where $b, b' \in B$ and $u \in A$.
\begin{lem}
\label{lem:Prim-stable}
	$F(A)$ and $\Prim F(A)$ are invariant under all maps in (\ref{eq:isotropy}).
\end{lem}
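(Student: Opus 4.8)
The plan is to prove the two invariance statements separately, and in each case to handle all six maps of (\ref{eq:isotropy}) by one uniform argument: each is an alternating word in the multiplication, the left division $\backslash$ and the right division $/$, applied to $u$ and to Sweedler components of $b$ (and, for the two-variable maps, of $b'$). So I would carry out the argument in detail only for $t(b)u = \sum b\1 \backslash (u b\2)$, and then observe that the other five are treated verbatim, the only change being which basic division identity is invoked at the end. The facts I would use throughout are: $\pi$ is a morphism of bialgebras, hence commutes with comultiplication, product, $\backslash$ and $/$; the product, $\backslash$ and $/$ on $A$ are coalgebra morphisms; and $A$ is cocommutative.

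For invariance of $F(A)$, recall that $x \in F(A)$ if and only if $\pi(x) = \epsilon(x)1$. Given $u \in F(A)$, I would apply $\pi$ to $t(b)u$; using $\pi(u) = \epsilon(u)1$ this gives $\pi(t(b)u) = \epsilon(u) \sum b\1 \backslash b\2 = \epsilon(u)\epsilon(b)1$, the last equality being the identity $\sum x\1 \backslash (x\2 v) = \epsilon(x)v$ with $x = b$ and $v = 1$. Since $\epsilon(t(b)u) = \epsilon(b)\epsilon(u)$, this shows $t(b)u \in F(A)$. For $l, r, s, \bar{s}$ the same computation applies, collapsing instead $\sum (b\1 b'\1) \backslash (b\2 b'\2) = \epsilon(bb')1$ (via $\Delta(bb') = \sum b\1 b'\1 \otimes b\2 b'\2$) or its analogue for $/$; for $\bar{t}$ one uses $\sum x\1/x\2 = \epsilon(x)1$.

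For invariance of $\Prim F(A)$: since $t(b)$ already maps $F(A)$ into itself and $F(A)$ is a subcoalgebra containing $1$, it suffices to check that $t(b)u$ is primitive whenever $u$ is. Expanding $\Delta(t(b)u)$ using that $\backslash$ and the product are coalgebra morphisms, together with coassociativity, yields
\begin{displaymath}
	\Delta(t(b)u) = \sum (b\1 \backslash (u\1 b\3)) \otimes (b\2 \backslash (u\2 b\4)),
\end{displaymath}
where $\sum b\1 \otimes b\2 \otimes b\3 \otimes b\4$ is the iterated comultiplication of $b$ and $\sum u\1 \otimes u\2 = \Delta(u)$. Substituting $\Delta(u) = u \otimes 1 + 1 \otimes u$ splits the right-hand side into two sums. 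In the first, $\sum (b\1 \backslash (u b\3)) \otimes (b\2 \backslash b\4)$, I would use cocommutativity of $A$ to permute the factors of the iterated comultiplication so that $b\2$ and $b\4$ become consecutive; then $\sum b\2 \backslash b\4$ telescopes to a counit, and the term reduces to $t(b)u \otimes 1$. The second sum gives $1 \otimes t(b)u$ by the symmetric manipulation, so $\Delta(t(b)u) = t(b)u \otimes 1 + 1 \otimes t(b)u$ and $t(b)u \in \Prim F(A)$. The maps $l, r, s, \bar{s}, \bar{t}$ follow the same recipe: expand $\Delta$, insert $\Delta(u) = u \otimes 1 + 1 \otimes u$, use cocommutativity to make the Sweedler components \emph{not} carrying $u$ consecutive, and collapse them with the appropriate division identity.

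The computations are essentially routine; the one point I expect to require care is the Sweedler-index bookkeeping for the two-variable maps $l, r, s, \bar{s}$, where the iterated comultiplications of $b$ and of $b'$ must be permuted simultaneously and one must make sure that the components collapsed by the division identity are exactly the ``spectator'' ones. This is precisely the step where cocommutativity of $A$ is indispensable: without it, words such as $\sum (b\1 b'\1) \backslash (b\2 u \cdot b'\2)$ could not be rearranged into a form admitting the telescoping division identities.
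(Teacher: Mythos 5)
There is a genuine gap in your argument for the invariance of $F(A)$: you assert that $x \in F(A)$ if and only if $\pi(x) = \epsilon(x)1$, but only the ``only if'' direction is true. By definition, $F(A)=\ker(\pi)$ in $\cB$ is the \emph{sum of all subcoalgebras} of $A$ on which $\pi$ and $\epsilon 1$ agree, and this is in general strictly smaller than the subspace $\{x \in A \mid \pi(x)=\epsilon(x)1\}$. For example, take $B=\field[b]$ with $b$ primitive, $A=\field[x,b]$ with $x,b$ primitive, $\pi$ the bialgebra map sending $x\mapsto 0$, $b\mapsto b$, and the obvious section; then $F(A)=\field[x]$, yet the element $xb$ satisfies $\pi(xb)=0=\epsilon(xb)1$ while any subcoalgebra containing $xb$ also contains $b$ (apply $(\Id\otimes f)\Delta$ with $f$ dual to $x$), so $xb\notin F(A)$. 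Consequently, your computation $\pi(t(b)u)=\epsilon(t(b)u)1$ does not by itself place $t(b)u$ in $F(A)$, and the same objection applies to your treatment of $l,r,s,\bar{s},\bar{t}$.

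The missing ingredient --- and the paper's actual argument --- is the compatibility of the maps in (\ref{eq:isotropy}) with the comultiplication: since the product and the divisions are coalgebra morphisms and $A$ is cocommutative, one has $\Delta(l(b,b')x)=\sum l(b\1,b'\1)x\1 \otimes l(b\2,b'\2)x\2$, and analogously for the other five maps. Hence the span of the elements $\varphi(b\1,b'\1)x$ with $x\in F(A)$ is a subcoalgebra of $A$, on which your projection computation shows that $\pi$ and $\epsilon 1$ agree; it is therefore one of the summands in the definition of $F(A)$, so it lies in $F(A)$. Note that you essentially derive exactly this $\Delta$-compatibility in your second half (your computation for $\Prim F(A)$ is correct, and it is the reason the paper can dismiss the primitive case as obvious once $F(A)$-invariance and the $\Delta$-formula are in hand, since $\varphi(b\1,b'\1)1$ collapses to a multiple of $1$). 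So the repair is to carry out that comultiplication computation for arbitrary $x\in F(A)$ and use it to prove $F(A)$-invariance, rather than relying on the false pointwise characterization of $F(A)$.
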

\begin{proof}
	The projection of any element $x \in F(A)$ by $\pi$ is $\epsilon(x)1$. 
	We only show invariance of $F(A)$ under $l(b,b')$; the rest of the cases are proved similarly. 
	The projection of $l(b,b')x$ is $\epsilon(b)\epsilon(b')\epsilon(x)1 = \epsilon(l(b,b')x)1$. 
	Since $\Delta(l(b,b')x) = \sum l(b\1,b'\1)x\1 \otimes l(b\2,b'\2)x\2$, the image of $F(A)$ by $l(b,b')$ is a sub-coalgebra and 
	therefore, it is one of the summands in the definition of $F(A)$, i.e. $l(b,b')F(A) \subseteq F(A)$.
	Invariance of $\Prim F(A)$ is now obvious.
\end{proof}
The following proposition describes $A$ in terms of $F(A)$ and $B$.
\begin{prop}
\label{prop:product}
	Let $x, x' \in F(A)$ and $b, b' \in B$. We have that
	\begin{align*}
		xb \cdot x'b' &= \sum \left(r(b\1, b'\1)x \cdot s(b\2, b'\2)x'\right) \cdot b\3 b'\3 \quad \text{and}\\
		bx \cdot b'x' &= \sum b\1 b'\1 \cdot \left( \bar{s}(b\2, b'\2)x \cdot l(b\3,b'\3)x'\right).
	\end{align*}
\end{prop}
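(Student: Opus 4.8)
The plan is to express $xb\cdot x'b'$ in terms of $F(A)$ and $B$ by the same divide-and-multiply strategy used throughout the section, isolating $F(A)$-components on the left and the $B$-component on the right. Concretely, I would start from the factorization $A=F(A)B$ of Proposition \ref{prop:factorization}, which guarantees that every element $u\in A$ satisfies $u=\sum\bigl(u\1/\pi(u\2)\bigr)\pi(u\3)$. Applying this to $u=xb\cdot x'b'$ and using that $\pi$ is a bialgebra morphism with $\pi(x)=\epsilon(x)1$, $\pi(x')=\epsilon(x')1$, one computes $\pi(xb\cdot x'b')=\pi(xb)\pi(x'b')=bb'$, so the rightmost factor is $\sum b\3 b'\3$ after suitably splitting the comultiplications. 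This reduces the claim to identifying the $F(A)$-part $\sum (xb\cdot x'b')\1/\pi\bigl((xb\cdot x'b')\2\bigr)$ with $\sum r(b\1,b'\1)x\cdot s(b\2,b'\2)x'$.

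The key step is then to decouple the two $F(A)$-factors. Here I would invoke Proposition \ref{prop:distributive}: for $y,y'\in F(A)$ and $c\in B$ one has $(yy')c=\bp\bigl(\sum yc\1\otimes y'c\2\bigr)$, and since $\bp$ preserves fibers and $F(A)$ is closed under the divisions, pushing a single right division or left division through a $\bp$ of two terms amounts to applying the division to each term separately over the split unit $\sum c\1\otimes c\2$. Expanding $xb\cdot x'b'$ via the coalgebra structure so that each of $b,b'$ is comultiplied enough times, I would rewrite it as $\bp$ of a sum of terms each of the shape $\bigl((x b_{(i)})b'_{(j)}\bigr)\otimes\bigl(b_{(k)}(x'b'_{(l)})\bigr)$ — this is exactly the step that mirrors the classical Smith computation $(x_e0_a)(y_e0_b)=\bp\bigl((x_e0_a)0_b,0_a(y_e0_b)\bigr)$ reproduced in Section \ref{sec:Modules_for_loops}. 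Dividing the first slot on the right by the appropriate $b\2 b'\2$ produces $r(b\1,b'\1)x$ by the very definition of $r$ in (\ref{eq:isotropy}), and dividing the second slot produces $s(b\2,b'\2)x'$ by the definition of $s$; Lemma \ref{lem:Prim-stable} ensures these stay inside $F(A)$ so the rewriting is legitimate. The second identity for $bx\cdot b'x'$ is entirely symmetric, using left divisions, $\bar s$, $l$, and the mirror of Proposition \ref{prop:distributive}.

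The bookkeeping obstacle — and the part I expect to be most delicate — is the Sweedler-index management: one must comultiply $b$ and $b'$ the correct number of times (three, as the statement shows) and be careful that coassociativity and cocommutativity let us reorganize $\sum b\1\otimes b\2\otimes b\3$ freely, and that the divisions $\backslash$ and $/$ are coalgebra morphisms so that $\Delta\bigl(u/v\bigr)=\sum u\1/v\1\otimes u\2/v\2$ when distributing $\Delta$ across the intermediate expressions. I would organize the computation by first proving the auxiliary identity $\sum (yb\1\cdot b'\1)\,c = \bp\bigl(\sum (yb\1\cdot b'\1)c\2\otimes \cdots\bigr)$-type reductions as lemmas implicit in Proposition \ref{prop:distributive}, then assembling. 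A useful sanity check at the end: setting $b=b'=1$ recovers $xx'=x\bp x'$, and projecting by $\pi$ recovers $\pi(xb\cdot x'b')=bb'$, confirming the index counts are consistent. Once the $F(A)$-part has been matched term by term with $r(b\1,b'\1)x\cdot s(b\2,b'\2)x'$ and the $B$-part with $\sum b\3 b'\3$, the factorization isomorphism of Proposition \ref{prop:F_AxA}(3) (or directly Proposition \ref{prop:factorization}) guarantees the two sides of the claimed identity agree in $A$.
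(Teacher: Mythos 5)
Your plan is essentially the paper's own proof run in the opposite direction: the paper verifies the identity by applying Proposition \ref{prop:distributive} to the right-hand side, cancelling the divisions in the definitions (\ref{eq:isotropy}) of $r$ and $s$ against the factors $b\2 b'\2$, and then using that $\bp$ is a bialgebra homomorphism to factor the resulting tensor as $\bp\left(\sum xb\1\otimes b\2\right)\bp\left(\sum b'\1\otimes x'b'\2\right)=xb\cdot x'b'$, which is exactly your chain read backwards. The only remarks are that your opening detour through the factorization $u=\sum\left(u\1/\pi(u\2)\right)\pi(u\3)$ and the final appeal to injectivity are unnecessary once the chain of equalities is in place, and that $\pi(xb\cdot x'b')=\epsilon(x)\epsilon(x')\,bb'$ rather than $bb'$ — a harmless slip in that peripheral sanity check.
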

\begin{proof}
	The crucial point is that $\bp$ is a homomorphism:
	\begin{align*}
		&\sum \left(r(b\1, b'\1) x \cdot s(b\2, b'\2)x' \right)\cdot b\3 b'\3 \\
		& \quad \stackrel{\langle 1 \rangle}{=} \sum \bp\left( r(b\1,b'\1)x \cdot b\2 b'\2 \otimes s(b\3,b'\3)x' \cdot b\4 b'\4 \right)\\
		& \quad = \sum \bp \left( xb\1 \cdot b'\1 \otimes b\2 \cdot x'b'\2  \right)\\
		& \quad = \bp\left( \sum xb\1 \otimes b\2  \right) \bp\left( \sum b'\1 \otimes x'b'\2  \right)\\
		& \quad \stackrel{\langle 2 \rangle}{=} xb \cdot x'b'
	\end{align*}
	where $\langle 1 \rangle$ and $\langle 2 \rangle$ follow by Proposition \ref{prop:distributive}. 
	The second formula in the statement can be proved in a similar way.
\end{proof}
\begin{prop}
\label{prop:module-algebra}
	Let $\varphi \in \{l,r,s,\bar{s}\}$ denote a map from (\ref{eq:isotropy}). Then
	\begin{align*}
	 \varphi(b,b') (xx') &= \sum \varphi(b\1, b'\1)x \cdot \varphi(b\2, b'\2)x', \\
	 t(b) (xx') &= \sum t(b\1)x \cdot t(b\2)x' \quad \text{and} \\
	 \bar{t}(b) (xx') &= \sum \bar{t}(b\1)x \cdot \bar{t}(b\2)x'
	\end{align*}
	for any $x, x' \in F(A)$.
\end{prop}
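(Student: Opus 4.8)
The plan is to factor each auxiliary map through the bialgebra $A\otimes_B A$ and then to use that $\bp\colon A\otimes_B A\to A$ is a homomorphism in $\cB\downarrow B$. Recall from Proposition \ref{prop:F_AxA} that $A\otimes_B A$ is an object of $\cB$ whose section is $b\mapsto\sum b\1\otimes b\2$ and with $F(A\otimes_B A)=F(A)\otimes F(A)$, and that its product and its divisions $\backslash,/$ are computed componentwise. In particular, for $x,x'\in F(A)$ one has $x\otimes x'\in F(A\otimes_B A)$, and the proposition immediately preceding Proposition \ref{prop:distributive} gives $xx'=\bp(x\otimes x')$.

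First I would record the two ingredients. (i) If $\varphi^{A\otimes_B A}(b,b')$ denotes the map defined by the formula (\ref{eq:isotropy}) inside the bialgebra $A\otimes_B A$, then
\begin{displaymath}
	\varphi^{A\otimes_B A}(b,b')(u\otimes v)=\sum\varphi(b\1,b'\1)u\otimes\varphi(b\2,b'\2)v
\end{displaymath}
for all $u,v\in A$ and $\varphi\in\{l,r,s,\bar{s}\}$, and similarly $t^{A\otimes_B A}(b)(u\otimes v)=\sum t(b\1)u\otimes t(b\2)v$ and $\bar{t}^{A\otimes_B A}(b)(u\otimes v)=\sum\bar{t}(b\1)u\otimes\bar{t}(b\2)v$. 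This is a direct computation: the product and the divisions on $A\otimes_B A$ act componentwise, the section sends $b$ to $\sum b\1\otimes b\2$, and, since multiplication is a coalgebra morphism, $\Delta(bb')=\sum b\1 b'\1\otimes b\2 b'\2$, so the Sweedler legs of $b$ and of $b'$ distribute exactly over the two tensor factors. (ii) Since $\bp$ is a bialgebra homomorphism with $\bp(\sum b\1\otimes b\2)=b$ by (\ref{eq:zero_element}), and since on an irreducible unital bialgebra the divisions are obtained canonically from $\mu,\Delta,\epsilon$ (Section \ref{sec:Modules_Formal_Loops}), the map $\bp$ also respects $\backslash$ and $/$; substituting $\bp$ into (\ref{eq:isotropy}) term by term then gives $\varphi(b,b')\circ\bp=\bp\circ\varphi^{A\otimes_B A}(b,b')$, and likewise for $t$ and $\bar{t}$.

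Combining these, for $x,x'\in F(A)$ and $\varphi\in\{l,r,s,\bar{s}\}$,
\begin{align*}
	\varphi(b,b')(xx')&=\varphi(b,b')\bp(x\otimes x')=\bp\bigl(\varphi^{A\otimes_B A}(b,b')(x\otimes x')\bigr)\\
	&=\bp\Bigl(\sum\varphi(b\1,b'\1)x\otimes\varphi(b\2,b'\2)x'\Bigr),
\end{align*}
and, since $\varphi(b\1,b'\1)x$ and $\varphi(b\2,b'\2)x'$ lie in $F(A)$ by Lemma \ref{lem:Prim-stable}, the outer $\bp$ is just their product in $A$; this is exactly the claimed identity, and the cases of $t$ and $\bar{t}$ are identical with one group argument in place of two.

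The step I expect to be the main obstacle is ingredient (i): checking that the auxiliary maps on $A\otimes_B A$ split as tensor products of the auxiliary maps on $A$ requires tracking several Sweedler legs of $b$ and $b'$ as they pass through the componentwise product and divisions, using $\Delta(bb')=\sum b\1 b'\1\otimes b\2 b'\2$ repeatedly. The only other point needing care is the claim in (ii) that $\bp$ preserves $\backslash$ and $/$, which rests on the fact recalled in Section \ref{sec:Modules_Formal_Loops} that these divisions are built canonically from the multiplication, comultiplication and counit, so that every homomorphism of irreducible unital bialgebras preserves them.
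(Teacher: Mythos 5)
Your proposal is correct, but it takes a genuinely different route from the paper. The paper never leaves $A$: it uses Proposition \ref{prop:distributive} to rewrite $\sum b\1 b'\1 \cdot \left( l(b\2,b'\2)x \cdot l(b\3,b'\3)x'\right)$ as a value of $\bp$, collapses it to $b(b'\cdot xx')$ using that $\bp$ is a homomorphism and (\ref{eq:zero_element}), and then recovers the claimed identity by left-cancelling $b\1 b'\1$ via $\backslash$, exactly as built into the definition of $l$ in (\ref{eq:isotropy}); no divisions on $A\otimes_B A$ and no preservation of divisions by $\bp$ are ever invoked. You instead prove a naturality statement: the maps (\ref{eq:isotropy}) formed inside $A\otimes_B A$ (with its section $b\mapsto\sum b\1\otimes b\2$) act componentwise, and $\bp$ intertwines them, so the result follows from $xx'=\bp(x\otimes x')$ and Lemma \ref{lem:Prim-stable}. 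This is a clean uniform argument covering all six maps at once, but it rests on two facts the paper does not state: that the product and divisions of $A\otimes_B A$ are the componentwise ones, and that $\bp$ respects $\backslash$ and $/$. Both are true, but your justification of the second (``the divisions are obtained canonically from $\mu,\Delta,\epsilon$'') is the one spot to firm up: the right argument is that the divisions on an irreducible unital bialgebra are \emph{unique} (if $\backslash$ and $\backslash'$ both satisfy the cancellation identities, a standard regrouping of $\sum u\1\backslash'\left(u\2(u\3\backslash v)\right)$ shows they coincide), and the same regrouping applied to $\sum f(u\1)\backslash'\left(f(u\2)f(u\3\backslash v)\right)$ shows every morphism $f$ of irreducible unital bialgebras satisfies $f(u\backslash v)=f(u)\backslash f(v)$; uniqueness also gives that the sub-bialgebra $A\otimes_B A\subseteq A\otimes A$ inherits the componentwise divisions (this is not literally part of Proposition \ref{prop:F_AxA}, which you cite for it). Note also that your componentwise formula in (i) silently uses cocommutativity to regroup the Sweedler legs of $b$ and $b'$, which is fine since the paper assumes all coalgebras are cocommutative. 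The paper's computation is shorter and avoids these auxiliary facts; your version is more conceptual and explains why all the maps in (\ref{eq:isotropy}) behave the same way.
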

\begin{proof}
	We have
	\begin{align*}
		& \sum b\1 b'\1 \cdot \left( l(b\2, b'\2)x \cdot l(b\3, b'\3)x' \right) \\
		& \quad \stackrel{\langle 1 \rangle}{=} \sum \bp\left( b\1 b'\1 \cdot l(b\2, b'\2)x \otimes b\3 b'\3 \cdot l(b\4, b'\4)x' \right)	\\
		& \quad = \sum \bp\left( b\1 \cdot b'\1 x \otimes b\2 \cdot b'\2 x' \right)\\
		& \quad = \sum \bp( b\1 \otimes b\2) \cdot \bp(b'\1 \otimes b'\2) \bp(x \otimes  x')\\
		& \quad = b (b'\cdot xx')
	\end{align*}
	where $\langle 1 \rangle$ follows from Proposition \ref{prop:distributive}. Therefore,
	\begin{displaymath}
		\sum l(b\1, b'\1) x \cdot l(b\2, b'\2)x' = l(b,b')(xx').
	\end{displaymath}
	This proves the case $\varphi = l$; the rest of the cases are proved using similar arguments.
\end{proof}
Some sort of associativity between $F(A)$ and $B$ also holds.
\begin{prop}
	Let $x,x' \in F(A)$ and $b, b' \in B$. We have that
	\begin{displaymath}
		x(x'b) = (xx')b, \quad (bx)x' = b(xx') \quad \text{and} \quad (xb)x' = x(bx')
	\end{displaymath}
\end{prop}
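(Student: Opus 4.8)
The plan is to prove the three "mixed associativity'' identities by exploiting the fact, already established in Proposition \ref{prop:product}, that the full product on $A$ is reconstructed from the products on $F(A)$ and $B$ together with the isotopy maps in (\ref{eq:isotropy}). In each case I would first rewrite $b$ as $0_b = \sum 0_{b\1}\otimes 0_{b\2}$ under the identification that suppresses the section $0$, and then use that $\bp$ is a homomorphism of bialgebras to move it past the product. Concretely, for the first identity I would compute $x(x'b)$ via the second formula of Proposition \ref{prop:distributive} applied in the form that expresses a product $x\cdot (x'b)$, or more directly I would set $b'=1$ in Proposition \ref{prop:product} (so that $b'\1\otimes b'\2\otimes b'\3 = 1\otimes 1\otimes 1$ and $s(b,1)$, $\bar s(b,1)$, $l(b,1)$, $r(b,1)$ collapse) and check that the isotopy maps degenerate to the identity in the relevant slots, leaving exactly $(xx')b$. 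The essential input is that $F(A)$ is an associative commutative sub-bialgebra (the previous proposition), so there is no ambiguity in writing $xx'$ and no parentheses are needed there.

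First I would handle $x(x'b) = (xx')b$. Here I would write $x'b$ as a single element of $A$ and compute $x \cdot (x'b)$. Using that $\bp$ is a homomorphism together with $\bp(x\otimes 1)=x$ and (\ref{eq:zero_element}), one gets $x\cdot(x'b) = \bp(x\otimes 1)\cdot\bp(\sum 0_{b\1}\otimes x'0_{b\2}) = \bp(\sum x\cdot 0_{b\1}\otimes x'\cdot 0_{b\2})$ — but $x\cdot 0_{b\1}$ is just $xb\1$ in our abbreviated notation, and then Proposition \ref{prop:distributive} (read right to left) identifies $\bp(\sum xb\1\otimes x'b\2)$ with $(xx')b$. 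Symmetrically, $(bx)x' = b(xx')$ follows from the mirror-image version of Proposition \ref{prop:distributive}, namely $b(xx') = \bp(\sum b\1 x\otimes b\2 x')$, by writing $bx$ as $\bp(\sum b\1\otimes b\2 x)$ via the analogue of (\ref{eq:zero_element}) and again pushing $\bp$ through.

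The third identity $(xb)x' = x(bx')$ is the one I expect to be slightly more delicate, since neither side is an immediate instance of Proposition \ref{prop:distributive}: here the scalar-like factor $b$ sits between the two $F(A)$-elements. The strategy would be to expand both sides using Proposition \ref{prop:product} with the second argument (resp.\ first argument) trivial: $(xb)\cdot(x'1) = \sum (r(b\1,1)x\cdot s(b\2,1)x')\cdot b\3$ and $(x1)\cdot(b x') = \sum b\1\cdot(\bar s(b\2,1)x\cdot l(b\3,1)x')$, then show that with $b'=1$ the maps reduce so that $r(b,1)=\mathrm{id}$ on $F(A)$, $l(b,1)=\mathrm{id}$ on $F(A)$, and $s(b,1)u = \sum b\1\backslash(u b\2) = t(b)u = \bar s(b,1)u$. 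Granting these reductions, both sides become $\sum b\1(x\cdot t(b\2)x')\cdot$ — wait, one must track the coproduct bookkeeping carefully — and an application of Proposition \ref{prop:module-algebra} for $\varphi=s$ (or $t$) together with the commutativity of $F(A)$ collapses them to the common value. The main obstacle, and the only place real care is needed, is verifying that the isotopy maps genuinely degenerate when one argument is $1$ (i.e.\ $\epsilon(b')=1$, $b'\1\otimes b'\2 = 1\otimes 1$) and then matching the resulting three-fold Sweedler expansions on the two sides; everything else is a routine consequence of $\bp$ being a bialgebra homomorphism and of the associativity–commutativity of $F(A)$.
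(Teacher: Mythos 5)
Your handling of the first two identities is correct and is essentially the paper's own mechanism in a slightly different packaging: the paper plugs $b=1$ (resp.\ $b'=1$) into Proposition \ref{prop:product} and lets the division axioms collapse the isotopy maps, while you re-run the underlying computation ($\bp$ is a homomorphism, plus Proposition \ref{prop:distributive} read backwards). The genuine gap is in the third identity, exactly where you flagged trouble and then trailed off. Two of your claimed degenerations are wrong: from (\ref{eq:isotropy}) one has $s(b,1)u=\sum (b\1 u)/b\2=\bar{t}(b)u$, \emph{not} $t(b)u$, whereas $\bar{s}(1,b)u=\sum b\1\backslash(ub\2)=t(b)u$; so $s(b,1)\neq\bar{s}(1,b)$ in general. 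Moreover, when you expand $x(bx')=(1\cdot x)(bx')$ by the second formula of Proposition \ref{prop:product}, the arguments must sit in the slots $\bar{s}(1,b\2)$ and $l(1,b\3)$, not $\bar{s}(b\2,1)$ and $l(b\3,1)$; with your slotting both maps collapse to $\epsilon(b)\,\mathrm{Id}$ and the right-hand side would read $b(xx')$, i.e.\ you would be ``proving'' $x(bx')=b(xx')$, which is false in general. After the correct reductions the two sides are $\sum\left(x\cdot\bar{t}(b\1)x'\right)b\2$ and $\sum b\1\left(t(b\2)x\cdot x'\right)$, and these are not formally equal: one has $b$ on the right acting through $\bar{t}$ on $x'$, the other has $b$ on the left acting through $t$ on $x$; cocommutativity and Proposition \ref{prop:module-algebra} do not bridge that, so an extra step is genuinely missing.

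Two ways to close it. (a) Use the factorization coming from Proposition \ref{prop:factorization}: $bx'=\sum\left((b\1x')/b\2\right)b\3=\sum\bar{t}(b\1)x'\cdot b\2$, with $\bar{t}(b\1)x'\in F(A)$ by Lemma \ref{lem:Prim-stable}; then your already-proved first identity gives $x(bx')=\sum\left(x\cdot\bar{t}(b\1)x'\right)b\2$, which is exactly the expansion of $(xb)x'$ above. (b) Simpler and in the spirit of your own argument for the first two identities: write $xb=\bp\left(\sum xb\1\otimes b\2\right)$, $x'=\bp(1\otimes x')$, $x=\bp(x\otimes1)$ and $bx'=\bp\left(\sum b\1\otimes b\2x'\right)$ (unit axioms for the abelian group), and use that $\bp$ is a homomorphism on $A\otimes_BA$ to get
\begin{displaymath}
	(xb)x'=\bp\Bigl(\sum xb\1\otimes b\2x'\Bigr)=x(bx'),
\end{displaymath}
which makes the third identity no more delicate than the first two.
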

\begin{proof}
	The different formulas in the statement are a direct consequence of Proposition \ref{prop:product}. For example,
	\begin{displaymath}
		x(x'b) = \sum \left(r(1,b\1)x \cdot s(1,b\2)x'\right) b\3 = (xx')b.
	\end{displaymath}
\end{proof}
%
\subsubsection{Fundamental theorem on modules for objects in $\cB$}
Let $B$ be an object in $\cB$ and let $\Mult_B$ be the unital associative algebra generated by the set $\{ \llambda_b, \rrho_b \mid b \in B\}$ 
with relations
\begin{displaymath}
	\llambda_1 = 1 = \rrho_1, \quad \llambda_{\alpha b + \alpha' b'} = \alpha \llambda_b + \alpha' \llambda_{b'} 
	\quad \textrm{and} \quad 
	\rrho_{\alpha b + \alpha' b'} = \alpha \rrho_b + \alpha' \rrho_{b'}
\end{displaymath}
for any $\alpha, \alpha' \in \field$ and $b,b' \in B$. We consider the bialgebra structure on $\Mult_B$ determined by
\begin{displaymath}
	\Delta(\llambda_b) := \sum \llambda_{b\1} \otimes \llambda_{b\2}, \quad \Delta(\rrho_b):= \sum \rrho_{b\1} \otimes \rrho_{b\2}
\end{displaymath}
and
\begin{displaymath}
	\epsilon(\llambda_b):= \epsilon(b), \quad \epsilon(\rrho_{b}):= \epsilon(b).
\end{displaymath}
Since $B$ is irreducible, we can use induction on the coradical filtration of $B$ (see Section \ref{sec:Modules_Formal_Loops}) 
to prove the existence of uniquely determined elements $S(\llambda_b)$ and $S(\rrho_b)$ in $\Mult_B$ such that
\begin{displaymath}
	\sum S(\llambda_{b\1})\llambda_{b\2} = \epsilon(b) 1 = \sum S(\rrho_{b\1})\rrho_{b\2}.
\end{displaymath}
We extend $S$ to $\Mult_B$ by imposing that $S(\phi \phi') = S(\phi')S(\phi)$ for any $\phi, \phi'\in \Mult_B$.
\begin{prop}
	The bialgebra $\Mult_B$ is a Hopf algebra with antipode $S$.
\end{prop}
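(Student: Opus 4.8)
The plan is to show that the algebra anti-endomorphism $S$ just constructed is a two-sided convolution inverse of $\Id_{\Mult_B}$, i.e.\ a genuine antipode; combined with the (routine) fact that $\Mult_B$ is an associative cocommutative bialgebra this gives the proposition. For the bialgebra structure one checks that $\Delta$ and $\epsilon$ kill the defining relations of $\Mult_B$: e.g.\ $\Delta(\llambda_1)=\sum\llambda_{1\1}\otimes\llambda_{1\2}=\llambda_1\otimes\llambda_1$ and $\llambda_1\otimes\llambda_1-1\otimes1=(\llambda_1-1)\otimes\llambda_1+1\otimes(\llambda_1-1)$ lies in the defining ideal of $\Mult_B\otimes\Mult_B$, whereas $\Delta(\llambda_{\alpha b+\alpha' b'})=\alpha\Delta(\llambda_b)+\alpha'\Delta(\llambda_{b'})$ because $b\mapsto\llambda_b$ and $\Delta_B$ are linear; the same works for $\epsilon$ and for the $\rrho$'s. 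Coassociativity, counitality and cocommutativity are then identities between algebra homomorphisms that already hold on the generators (cocommutativity because $B$ is cocommutative and the flip of $\Mult_B\otimes\Mult_B$ is an algebra automorphism), hence hold on all of $\Mult_B$. Finally, a straightforward induction on the coradical filtration of $B$ shows that $b\mapsto S(\llambda_b)$ and $b\mapsto S(\rrho_b)$ are $\field$-linear with $S(1)=1$, so the prescription $S(\phi\phi')=S(\phi')S(\phi)$ is consistent with the relations and $S$ is a well-defined algebra anti-endomorphism of $\Mult_B$.

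The substantive step is to prove $S*\Id=\eta\epsilon$ on all of $\Mult_B$, where $*$ denotes the convolution product on $\Endo(\Mult_B)$ and $\eta\epsilon\colon\phi\mapsto\epsilon(\phi)1$ is its unit. Set $\Sigma:=\{\phi\in\Mult_B\mid\sum S(\phi\1)\phi\2=\epsilon(\phi)1\}$. It is a linear subspace, it contains $1$ (since $S(1)=1$), and it contains every $\llambda_b$ and every $\rrho_b$, by the defining property of $S$ together with $\Delta(\llambda_b)=\sum\llambda_{b\1}\otimes\llambda_{b\2}$ (and its analogue for $\rrho$). The crux is that $\Sigma$ is closed under products: as $\Delta$ is an algebra homomorphism and $S$ an algebra anti-homomorphism, for $\phi,\psi\in\Sigma$ one has
\begin{displaymath}
	\sum S((\phi\psi)\1)(\phi\psi)\2=\sum S(\psi\1)S(\phi\1)\,\phi\2\psi\2=\sum S(\psi\1)\,\epsilon(\phi)1\,\psi\2=\epsilon(\phi)\epsilon(\psi)1=\epsilon(\phi\psi)1,
\end{displaymath}
where the second equality isolates the $\phi$-sum, equal to $\epsilon(\phi)1$ as $\phi\in\Sigma$. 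Since $\Mult_B$ is generated as an algebra by the $\llambda_b$ and the $\rrho_b$, we get $\Sigma=\Mult_B$, i.e.\ $S$ is a left convolution inverse of $\Id$.

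Turning this into a two-sided inverse needs a further argument, because a bialgebra may have a one-sided convolution inverse of the identity without admitting an antipode. I would rerun the construction of $S$ in mirror form: induction on the coradical filtration of $B$ produces a $\field$-linear assignment $\llambda_b\mapsto S'(\llambda_b)$, $\rrho_b\mapsto S'(\rrho_b)$ with $\sum\llambda_{b\1}S'(\llambda_{b\2})=\epsilon(b)1$ and $\sum\rrho_{b\1}S'(\rrho_{b\2})=\epsilon(b)1$, extending via $S'(\phi\phi')=S'(\phi')S'(\phi)$ to an algebra anti-endomorphism $S'$; the mirror of the previous paragraph (the set $\{\phi\mid\sum\phi\1 S'(\phi\2)=\epsilon(\phi)1\}$ is a subalgebra containing the generators, using $\sum(\phi\psi)\1 S'((\phi\psi)\2)=\sum\phi\1\,\epsilon(\psi)1\,S'(\phi\2)$) shows $\Id*S'=\eta\epsilon$ on all of $\Mult_B$. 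Associativity of convolution then gives
\begin{displaymath}
	S=S*(\eta\epsilon)=S*(\Id*S')=(S*\Id)*S'=(\eta\epsilon)*S'=S',
\end{displaymath}
so $S=S'$ is both a left and a right convolution inverse of $\Id$, hence the antipode; therefore $\Mult_B$ is a Hopf algebra with antipode $S$.

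The only points requiring genuine care are the closure of $\Sigma$ and of its mirror under multiplication — the short computations where everything hinges on $\Delta$ being an algebra map, $S$ an algebra anti-map, and $\Mult_B\otimes\Mult_B$ carrying the componentwise product — and noticing that one must use the coradical filtration a second time to obtain the right inverse $S'$; everything else is bookkeeping with Sweedler notation.
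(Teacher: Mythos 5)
Your proof is correct and follows essentially the same route as the paper: you construct a mirror map $S'$ by a second induction on the coradical filtration, verify the one-sided antipode identities extend from the generators $\llambda_b,\rrho_b$ to all of $\Mult_B$ (your subalgebra $\Sigma$ is exactly the paper's ``induction on the degree of the elements''), and conclude $S=S'$ by the standard uniqueness of a two-sided convolution inverse. You merely carry out at the level of the convolution algebra $\Endo(\Mult_B)$ what the paper does legwise on $b$'s Sweedler components, while also spelling out the routine bialgebra and well-definedness checks the paper leaves implicit.
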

\begin{proof}
	We check that $\sum \llambda_{b\1}S(\llambda_{b\2}) = \epsilon(b)1 = \sum \rrho_{b\1}S(\rrho_{b\2})$ and 
	the statement follows by induction on the degree of the elements.
	If we consider $S'(\llambda_b)$ and $S'(\rrho_b)$ satisfying
	\begin{displaymath}
		\sum \llambda_{b\1}S'(\llambda_{b\2})= \epsilon(b)1 = \sum \rrho_{b\1}S'(\rrho_{b\2}),
	\end{displaymath}
	then we have that
	\begin{displaymath}
		S(\llambda_b)=\sum S(\llambda_{b\1})(\llambda_{b\2} S'(\llambda_{b\3})) 
		= 
		\sum (S(\llambda_{b\1})\llambda_{b\2}) S'(\llambda_{b\3})  = S'(\llambda_b).
	\end{displaymath}
\end{proof}
Given an abelian group $(A \stackrel{\pi}{\rightarrow} B, \bp, \bm, 0)$, the bialgebra $A$ is a (usual) left $\Mult_B$-module 
with the action determined by
\begin{displaymath}
 	\llambda_{b}u := 0(b)u = bu \quad \textrm{and} \quad \rrho_{b}u := u0(b) = ub
\end{displaymath}
for any $u \in A$. This action induces an arrow $\Mult_B \rightarrow B$ defined by $\phi \mapsto \phi 1$ in the category of coalgebras. 
We now determine the kernel of this arrow in $\cB$.
\begin{lem}
\label{lem:antipode_division}
	For any $b \in B$ and $u \in A$ we have that
	\begin{displaymath}
		S(\llambda_b)u = b\backslash u \quad \textrm{and} \quad S(\rrho_b)u = u/b
	\end{displaymath}
\end{lem}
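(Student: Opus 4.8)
The plan is to prove both identities by induction on the coradical filtration of $B$, exploiting the fact that $S$ is the antipode of the Hopf algebra $\Mult_B$ together with the division identities on $A$ recalled in Section \ref{sec:Modules_Formal_Loops}. Since the two assertions are symmetric (interchanging left and right multiplications, $\backslash$ and $/$), I would only carry out the argument for $S(\llambda_b)u = b\backslash u$ and remark that the second follows mutatis mutandis.

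First I would set up what it means to prove such an identity ``by induction on the coradical filtration''. Both sides, $b \mapsto S(\llambda_b)u$ and $b \mapsto b\backslash u$, are coalgebra morphisms from $B$ to $A$ (for fixed $u$), so it suffices to check they agree on a set of subcoalgebras whose union is all of $B$; concretely one checks the base case $b = 1$, where $S(\llambda_1) = S(1) = 1$ and $1\backslash u = u$, and then, assuming agreement for all elements of lower degree in the coradical filtration, one verifies it for $b$ by applying a suitable ``cancellation'' operator. The natural tool is the defining relation of the antipode: $\sum S(\llambda_{b\1})\llambda_{b\2} = \epsilon(b)1$. Acting on $u$, this reads $\sum S(\llambda_{b\1})(b\2 u) = \epsilon(b)u$. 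On the other hand the left division identity on $A$ from Section \ref{sec:Modules_Formal_Loops} gives $\sum b\1(b\2\backslash u) = \epsilon(b)u$, or more usefully, $\sum b\1\backslash(b\2 v) = \epsilon(b)v$ for all $v \in A$.

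The key step is to combine these. Consider the expression $\sum b\1\backslash(S(\llambda_{b\2})(b\3 u))$. On the one hand, using coassociativity and the antipode relation on the inner two factors $\sum S(\llambda_{b\2})(b\3 u) = \sum S(\llambda_{b\2})\llambda_{b\3} u$... — more carefully, one groups as $\sum b\1\backslash\bigl(\sum S(\llambda_{b\2})\llambda_{b\3}\cdot u\bigr) = \sum b\1\backslash(\epsilon(b\2)u) = \sum b\1\backslash u$ after absorbing the counit, which need not immediately collapse but the coalgebra-morphism bookkeeping handles it. On the other hand, one can also evaluate $\sum b\1\backslash(S(\llambda_{b\2})(b\3 u))$ by first using the induction hypothesis on $S(\llambda_{b\2})$ applied to $b\3 u$ — except that $S(\llambda_{b\2})$ lowers degree in $b$ relative to the whole, which is precisely where the induction is delicate. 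I would instead approach it by writing $S(\llambda_b) = \sum S(\llambda_{b\1})\llambda_{b\2}S(\llambda_{b\3}) - (\text{correction})$; actually the cleanest route is the uniqueness trick already used in the proof that $\Mult_B$ is a Hopf algebra: show that $u \mapsto b\backslash u$, viewed as the action of a would-be element, satisfies the same characterizing equation $\sum \llambda_{b\1}\cdot(b\2\backslash u) = \epsilon(b)u$ that $S(\llambda_b)$ satisfies, namely $\sum \llambda_{b\1}S(\llambda_{b\2}) = \epsilon(b)1$ evaluated at $u$; then conclude $S(\llambda_b)u = b\backslash u$ by the same one-line uniqueness computation $S(\llambda_b)u = \sum S(\llambda_{b\1})(\llambda_{b\2}(b\3\backslash u)) = \sum (S(\llambda_{b\1})\llambda_{b\2})(b\3\backslash u) = \sum \epsilon(b\1)(b\2\backslash u) = b\backslash u$, where associativity of the $\Mult_B$-action on $A$ is what lets us reassociate.

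The main obstacle I anticipate is keeping the bookkeeping of $\backslash$ straight: $\backslash$ is not associative and is only a bilinear coalgebra morphism on $A$, so the step ``$\sum S(\llambda_{b\1})(b\2\backslash u)$ reassociates'' must be justified by the module structure — $b\backslash u = S(\llambda_b)u$ is exactly what we are proving, so one has to be careful not to argue in a circle. The way out is that the identity $\sum b\1(b\2\backslash u) = \epsilon(b)u$ (valid in $A$ by Section \ref{sec:Modules_Formal_Loops}) is literally the statement $\sum \llambda_{b\1}\bigl((b\2\backslash u)\bigr) = \epsilon(b)u$ in the $\Mult_B$-module $A$, so the element $\phi_b := $ ``the operator $u\mapsto b\backslash u$'' behaves exactly like $S(\llambda_b)$ with respect to the left regular action, and then the uniqueness argument internal to the module (using only associativity of the action and the already-known relation $\sum S(\llambda_{b\1})\llambda_{b\2} = \epsilon(b)1$) finishes it without needing $\phi_b \in \Mult_B$.
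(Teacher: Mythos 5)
Your final uniqueness computation is correct and is essentially the paper's own proof: the paper performs the same one-line cancellation, combining an antipode relation with a division identity of $A$, merely using the mirror-image pair $\sum \llambda_{b\1}S(\llambda_{b\2}) = \epsilon(b)1$ together with $\sum b\1\backslash (b\2 v) = \epsilon(b)v$ in place of your $\sum S(\llambda_{b\1})\llambda_{b\2} = \epsilon(b)1$ with $\sum b\1(b\2\backslash u) = \epsilon(b)u$. The preliminary detour through coradical-filtration induction (including the inaccurate claim that $b \mapsto b\backslash u$ is a coalgebra morphism for fixed $u$) is unnecessary, but the argument you finally settle on is self-contained, avoids the circularity you worried about, and matches the paper's approach.
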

\begin{proof}
	Since $\epsilon(b)u = \sum \llambda_{b\1}S(\llambda_{b\2})u = \sum b\1 S(\llambda_{b\2})u$, we have that $S(\llambda_{b})u = b\backslash u$. 
	Similarly, $S(\rrho_b) u = u/b$.
\end{proof}
To obtain an action of a Hopf algebra on $F(A)$ we have to consider a subalgebra of $\Mult_B$. 
For any $b, b' \in B$ we define the elements $\rr(b,b'), \sss(b,b') \in \Mult_B$ by
\begin{align*}
	\rr(b,b') &:= \sum S(\rrho_{b\1 b'\1}) \rrho_{b'\2} \rrho_{b\2}, \\
	\sss(b,b') &:= \sum S(\rrho_{b\1 b'\1}) \llambda_{b\2} \rrho_{b'\2}.
\end{align*}
Lemma \ref{lem:antipode_division} implies that for any $u \in A$
\begin{displaymath}
	\rr(b,b')u = r(b,b')(u) \quad \textrm{and} \quad \sss(b,b')u = s(b,b')(u),
\end{displaymath}
where the maps in the right-hand side of the equalities are those defined in (\ref{eq:isotropy}).

The subalgebra of $\Mult_B$ generated by $\{\rr(b,b'), \sss(b,b') \mid b,b' \in B\}$ will be denoted by $\Mult^+_B$. 
$\Mult^+_B$ is a Hopf subalgebra of $\Mult_B$; moreover, it is the largest Hopf subalgebra of $\Mult_B$ stabilizing $F(A)$. 
\begin{lem}
	We have that
	\begin{enumerate}
		\item $\Mult^+_B = \{ \sum S(\rrho_{\phi\1 1})\phi\2 \mid \phi \in \Mult_B\}$ and that
		\item $\rrho_b \otimes \phi \mapsto \rrho_b \phi$ defines an isomorphism of coalgebras $\Mult_B \cong \rrho_B \otimes \Mult^+_B$.
	\end{enumerate}
\end{lem}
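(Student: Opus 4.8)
The plan is to prove the two statements about $\Mult^+_B$ in tandem, using the Hopf structure of $\Mult_B$ together with the filtration argument that has already been used repeatedly in this section. For (1), set $T := \{ \sum S(\rrho_{\phi\1 1})\phi\2 \mid \phi \in \Mult_B\}$ and show $\Mult^+_B = T$ by a double inclusion. The inclusion $\Mult^+_B \subseteq T$ should follow by observing that the generators $\rr(b,b')$ and $\sss(b,b')$ are already of the required form: taking $\phi = \rrho_{b'}\rrho_b$ gives $\sum S(\rrho_{\phi\1 1})\phi\2 = \sum S(\rrho_{(b'\1 b\1)}) \rrho_{b'\2}\rrho_{b\2}$, and since $b'\1 b\1$ appears inside $S(\rrho_{-})$ paired correctly with the comultiplication, this is $\rr(b,b')$ up to reindexing (and similarly $\phi = \rrho_{b'}\llambda_b$ yields $\sss(b,b')$); one then checks $T$ is closed under products, which is where the Hopf-algebra identities $S(\phi\phi')=S(\phi')S(\phi)$ and $\Delta$-multiplicativity are needed, so that a product of two elements of $T$ is again of that shape. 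For the reverse inclusion $T \subseteq \Mult^+_B$, I would argue by induction on the coradical filtration degree of $\phi$: for $\phi$ primitive-type the element $\sum S(\rrho_{\phi\1 1})\phi\2$ lies in the span of the $\rr$'s and $\sss$'s, and the inductive step uses that $\phi\2$ can itself be expanded via the product structure of $\Mult_B$, with each factor absorbed into the generators by the $S(\rrho_{-1})$-bookkeeping.

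For (2), I would first verify that $\rrho_B := \{\rrho_b \mid b \in B\}$ is a subcoalgebra of $\Mult_B$ (immediate from $\Delta(\rrho_b) = \sum \rrho_{b\1}\otimes\rrho_{b\2}$, $\epsilon(\rrho_b)=\epsilon(b)$) which is in fact isomorphic as a coalgebra to $B$, and that $\Mult^+_B$ is a subcoalgebra by part (1) (or directly: $\Delta\rr(b,b') = \sum \rr(b\1,b'\1)\otimes\rr(b\2,b'\2)$ and similarly for $\sss$, so the generators are "grouplike-compatible" and the sub-bialgebra they generate is a subcoalgebra). Then the map $\rrho_b \otimes \phi \mapsto \rrho_b \phi$ is manifestly a coalgebra morphism from $\rrho_B \otimes \Mult^+_B$ to $\Mult_B$. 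Surjectivity is essentially the Smith-type factorization: any $\rrho_b$ can be written as $\sum \rrho_{b\1}\bigl(S(\rrho_{b\2})\rrho_{b\3}\bigr)$ where the second factor $\sum S(\rrho_{b\2})\rrho_{b\3}$ lies in $\Mult^+_B$ (it is $\rr(b\2, \text{unit})$-like, or directly of the form in (1) with $\phi = \rrho_b$ since $\epsilon$ collapses the extra leg), and $\llambda_b = \sum \rrho_{b\1}\bigl(S(\rrho_{b\2})\llambda_{b\3}\bigr)$ with the second factor in $\Mult^+_B$ analogously via $\sss$; then a generation/induction argument propagates this to all of $\Mult_B$. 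For injectivity I would mimic the proof of Proposition \ref{prop:factorization} almost verbatim: if $\sum \rrho_{b_i}\phi_i = 0$, apply $\Delta$, then $\Id \otimes (\text{the arrow }\Mult_B \to B,\ \psi \mapsto \psi 1)$ composed with $S(\rrho_{-})$-convolution to peel off the $\phi_i$, using $\epsilon(\phi_i)1 = $ (projection of $\phi_i$) the way $\pi(x)=\epsilon(x)1$ was used there.

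The main obstacle I anticipate is the surjectivity half of (2) — equivalently the $T \subseteq \Mult^+_B$ inclusion in (1) — because it requires showing that \emph{every} word in the $\llambda_b$'s and $\rrho_b$'s, once "corrected" by the leading $S(\rrho_{\phi\1 1})$ factor, collapses into the subalgebra generated by just the two families $\rr(b,b')$ and $\sss(b,b')$. The identities $r(b,b')$ and $s(b,b')$ are quadratic in the multiplication operators, so expressing, say, a triple product in their terms demands a genuine computation exploiting coassociativity and the defining relations $\sum u\1 \backslash (u\2 v) = \epsilon(u)v$ etc.; the bookkeeping of Sweedler indices across nested $S(\rrho_{-})$'s is delicate, and getting the induction on coradical degree to close cleanly (rather than producing new generators at each step) is the crux. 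Everything else — the coalgebra statements, the Hopf-subalgebra claim, injectivity — should go through by the now-standard filtration and "$\bp$ is a homomorphism" techniques already deployed above.
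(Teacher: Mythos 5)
The crux of part (1) — showing that $\sum S(\rrho_{\phi\1 1})\phi\2$ lies in $\Mult^+_B$ for an \emph{arbitrary} word $\phi$ in the $\llambda_b$'s and $\rrho_b$'s — is precisely the step you flag as your ``anticipated obstacle'' and never carry out, so the proposal stops short of a proof of the statement it is supposed to establish (and of surjectivity in (2), which rests on it). The paper closes this by induction on the length of $\phi$ as a word in $\{\llambda_b,\rrho_b \mid b\in\ker\epsilon\}$, using two explicit recursion identities: for $\phi=\llambda_b\phi'$ one has $\sum S(\rrho_{\phi\1 1})\phi\2=\sum \sss(b,\phi'\1 1)\,S(\rrho_{\phi'\2 1})\phi'\3$, and for $\phi=\rrho_b\phi'$ one has $\sum S(\rrho_{\phi\1 1})\phi\2=\sum \rr(\phi'\1 1,b)\,S(\rrho_{\phi'\2 1})\phi'\3$. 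These follow from computing $\phi\1 1$, the fact that $\phi\mapsto\phi 1$ is a coalgebra map, and $\sum\rrho_{u\1}S(\rrho_{u\2})=\epsilon(u)1$; the essential point is that the argument fed into $\rr$ and $\sss$ is $\phi'\1 1\in B$, so no new generators appear and the induction closes — exactly the worry (``producing new generators at each step'') that you raise but do not resolve. Without an identity of this kind, the claimed ``collapse into the subalgebra generated by $\rr(b,b')$ and $\sss(b,b')$'' is simply not established.

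Two smaller points. The inclusion $\Mult^+_B\subseteq\{\sum S(\rrho_{\phi\1 1})\phi\2\}$ does not require showing that set is closed under products: every $\phi\in\Mult^+_B$ satisfies $\phi 1=\epsilon(\phi)1$, whence $\phi=\sum\epsilon(\phi\1)\phi\2=\sum S(\rrho_{\phi\1 1})\phi\2$ with $\phi$ itself as witness, which is the paper's one-line argument; also, if you insist on exhibiting the generators, the witness for $\sss(b,b')$ is $\phi=\llambda_b\rrho_{b'}$, not $\rrho_{b'}\llambda_b$ (the order matters, since $\Mult_B$ is noncommutative). Your plan for (2) — the map is clearly a coalgebra morphism, injectivity by mimicking Proposition \ref{prop:factorization}, surjectivity from (1) via $\phi=\sum\rrho_{\phi\1 1}\bigl(S(\rrho_{\phi\2 1})\phi\3\bigr)$ — is exactly what the paper intends when it omits that part, but it inherits the same dependence on the unproved inclusion above.
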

\begin{proof}
	The proof of part (2) is similar to that of Proposition \ref{prop:factorization} so we omit it. 
	Given $\phi \in \Mult^+_B$, since $\phi 1 = \epsilon(\phi)1$ and $\Mult^+_B$ is a coalgebra
	we have that $\phi = \sum \epsilon(\phi\1)\phi\2 = \sum S(\rrho_{\phi\1 1})\phi\2$. 
	Thus we only have to prove that for any $\phi \in \Mult_B$, the element $\sum S(\rrho_{\phi\1 1})\phi\2$ belongs to $\Mult^+_B$. 
	We proceed by induction on the degree of $\phi$ on $\{\llambda_b, \rrho_b \mid b \in \ker \epsilon \subseteq B \}$.
	Note that the case $\phi =1$ is trivial. 
	If we consider $\phi = \llambda_b$, we obtain $\sum S(\rrho_{\phi\1 1})\phi\2 = \sss(b,1)$.
	Taking $\phi = \rrho_b$ we get $\sum S(\rrho_{\phi\1 1})\phi\2 = \epsilon(b)1$. 
	In general, for $\phi = \llambda_b \phi'$ we have
	\begin{displaymath}
		\sum S(\rrho_{\phi\1 1})\phi\2 = \sum \sss(b,\phi'\1 1) S(\rrho_{\phi'\2 1}) \phi'\3
	\end{displaymath}
	and for $\phi = \rrho_b \phi'$ we have
	\begin{displaymath}
		\sum S(\rrho_{\phi\1 1})\phi\2 = \sum \rr(\phi'\1 1, b)S(\rrho_{\phi'\2 1})\phi'\3,
	\end{displaymath}
	which proves the induction step.
\end{proof}
Lemma \ref{lem:Prim-stable} and Proposition \ref{prop:module-algebra} imply that $\Prim F(A)$ is a (usual) left $\Mult^+_B$-module.

Summarizing, we obtained the following results about the abelian group $(A \stackrel{\pi}{\rightarrow} B, \bp, \bm, 0)$ in $\cB \downarrow B$.
\begin{enumerate}
	\item $F(A)$ is isomorphic to the symmetric algebra on $\Prim F(A)$.
	\item $\Prim F(A)$ is a left $\Mult^+_B$-module.
	\item This action extends to an action on $F(A)$ by Proposition \ref{prop:module-algebra}.
	\item $A \cong F(A) \otimes B$ as bialgebras with the product given by Proposition \ref{prop:product}.
\end{enumerate}
\begin{thrm} \label{thm:fundamental}
	Let $B$ be an object in $\cB$. The category of $B$-modules and the category of (usual) unital left $\Mult^+_B$-modules are equivalent.
\end{thrm}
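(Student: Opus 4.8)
The plan is to make the equivalence explicit by constructing functors in both directions and checking they are mutually quasi-inverse; essentially all the work has already been done in the propositions and lemmas leading up to the statement, so the proof is a matter of assembling them. In one direction, given a $B$-module $(A \stackrel{\pi}{\rightarrow} B, \bp, \bm, 0)$, items (1)--(4) in the summary above tell us how to extract a unital left $\Mult^+_B$-module: take $\Prim F(A)$ with the action of $\Mult^+_B$ guaranteed by Lemma \ref{lem:Prim-stable} and Proposition \ref{prop:module-algebra}. On morphisms, a morphism of $B$-modules is a coalgebra map $A \to A'$ over $B$ commuting with $\bp$; one checks it restricts to $F(A) \to F(A')$ (it preserves kernels of the projection), hence to $\Prim F(A) \to \Prim F(A')$, and that this restriction is $\Mult^+_B$-linear because the maps in (\ref{eq:isotropy}) are defined purely in terms of the product and divisions of the bialgebra, which the morphism respects.

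First I would carry out the reverse construction in detail. Given a unital left $\Mult^+_B$-module $M$, form the symmetric algebra $\field[M]$, which is an irreducible unital (associative, commutative) bialgebra with $\Prim \field[M] = M$. Extend the $\Mult^+_B$-action from $M$ to all of $\field[M]$ by the formulas of Proposition \ref{prop:module-algebra} (declaring each $\varphi(b,b')$, $t(b)$, $\bar t(b)$ to act as a coalgebra endomorphism determined by its value on primitives); one must check this is well-defined, i.e. respects the defining relations of $\Mult^+_B$, which follows because those relations hold in $\Mult_B$ and the extension is dictated by comultiplication. Then set $A := \field[M] \otimes B$ as a coalgebra, and define the product by the formula of Proposition \ref{prop:product}, using the extended action to interpret $r(b,b')$, $s(b,b')$, $\bar s(b,b')$, $l(b,b')$; define $\pi$, $\bp$, $\bm$, $0$ by the formulas forced on us by (\ref{eq:plusminuszero})-type considerations, i.e. $\pi(x \otimes b) = \epsilon(x)b$, $0(b) = 1 \otimes b$, and $\bp$ as in Proposition \ref{prop:distributive}. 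Then verify that $(A,\Delta,\epsilon,\mu,\eta)$ is a genuine irreducible unital bialgebra (unit $1 \otimes 1$, multiplicativity of $\Delta$ and $\epsilon$, irreducibility is automatic since the coalgebra is $\field[M \oplus \Prim B]$) and that $(A \stackrel{\pi}{\rightarrow} B, \bp, \bm, 0)$ satisfies all the abelian-group axioms in $\cB \downarrow B$.

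The main obstacle I expect is precisely this last verification: confirming that the product defined via Proposition \ref{prop:product} is associative-free-but-coherent in the sense required, that $\bp$ is a bialgebra homomorphism $A \times_B A \to A$, and that the associativity/commutativity/unit diagrams for an abelian group commute. The delicate point is that in the forward direction these identities were \emph{consequences} of $\bp$ being a homomorphism, whereas now I must run the implications backwards: I have an $\Mult^+_B$-module and must \emph{produce} a homomorphism $\bp$. The key technical fact making this work is that $\Mult^+_B$ is a Hopf algebra (so the action interacts well with comultiplication) and that the compatibility conditions between $r,s,\bar s,l,t,\bar t$ that were observed to hold in any $B$-module are in fact encoded in the relations of $\Mult^+_B$ and in Proposition \ref{prop:module-algebra}; one verifies that no \emph{further} relations are needed, so that every $\Mult^+_B$-module yields a valid $B$-module. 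Finally I would check the two round-trips: starting from a $B$-module $A$, Proposition \ref{prop:F_AxA}(3) and Proposition \ref{prop:product} show $A \cong F(A) \otimes B$ with exactly the reconstructed product, giving one natural isomorphism; starting from an $\Mult^+_B$-module $M$, one has $\Prim F(\field[M] \otimes B) = \Prim \field[M] = M$ with the original action, giving the other. Naturality in both cases is immediate from the explicit formulas, completing the equivalence.
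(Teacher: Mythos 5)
Your proposal is correct and follows essentially the same route as the paper's proof: the forward direction is exactly the summary preceding the theorem, and the reverse direction builds $A=\field[M]\otimes B$ with product $(x\otimes b)(x'\otimes b')=\sum(\rr(b\1,b'\1)x)(\sss(b\2,b'\2)x')\otimes b\3 b'\3$, section $0(b)=1\otimes b$, $\bm$ induced by the antipode of $\field[M]$, and $\bp$ given by the multiplication of $\field[M]$, followed by the (routine) verification of the abelian-group axioms. The only phrasing to adjust is the extension of the action to $\field[M]$: it is not obtained by letting each $\varphi(b,b')$ act as a coalgebra endomorphism determined on primitives, but by the module-algebra rule $\phi 1:=\epsilon(\phi)$, $\phi(xx'):=\sum(\phi\1 x)(\phi\2 x')$ (the formula of Proposition \ref{prop:module-algebra} you invoke), which is well defined because $\Mult^+_B$ is a cocommutative Hopf algebra, and only $\rr(b,b')$ and $\sss(b,b')$ are needed in the reconstruction, not $\bar{s}$, $l$, $t$, $\bar{t}$.
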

\begin{proof}
We only have to prove that for any unital left $\Mult^+_B$-module $V$ we can construct an abelian group 
$( A\stackrel{\pi}{\rightarrow} B, \bp, \bm, 0)$ in $\cB \downarrow B$ such that $\Prim F(A) = V$ and 
that the action of $\Mult^+_B$ on $\Prim F(A)$ agrees with the action on $V$.

Given a unital left $\Mult^+_B$-module $V$, we extend the action of $\Mult^+_B$ to $\field[V]$ by 
$\phi 1 := \epsilon(\phi)$ and $\phi (xx') := \sum (\phi\1 x)(\phi\2 x')$ for any $x,x' \in \field[V]$. 
Now define $$A:= \field[V] \otimes B$$ with the coalgebra structure of the tensor product and multiplication given by
\begin{displaymath}
	(x\otimes b)(x' \otimes b'):= \sum (\rr(b\1,b'\1)x) (\sss(b\2,b'\2)x') \otimes b\3 b'\3.
\end{displaymath}
With these operations, $A$ is an object in $\cB$. 
The projection $A \stackrel{\pi}{\rightarrow} B$ is defined by $x \otimes b \mapsto \epsilon(x)b$. 
Any subcoalgebra of $A$ containing $\field[V] \otimes 1$ contains a primitive element of $B$ (it must be connected) 
so $\field[V] \otimes 1$ is the largest subcoalgebra on which $\pi$ and $\epsilon 1$ agree, i.e. $F(A) = \field[V] \otimes 1$. 
Hence $\Prim F(A) = V \otimes 1$.

In $A$ we have that
\begin{align*}
	(x\otimes 1)(1\otimes b) &= x \otimes b \\
	((1\otimes b)(x \otimes 1))(1\otimes b') &= \sum \sss(b\1,b'\1)x \otimes b\2 b'\2,
\end{align*}
hence the maps $r(b,b'), s(b,b')$ defined in (\ref{eq:isotropy}) agree on $\field[V]\otimes 1$ with the action of $\rr(b,b'), \sss(b,b') \in \Mult^+_B$, 
after identifying $\field[V] \otimes 1$ with $\field[V]$. 
This shows that the action of $\Mult^+_B$ on $\Prim F(A)$ is the same as the action on $V$ after identifying $V \otimes 1$ with $V$.

The following map determines a morphism in $\cB \downarrow B$
\begin{align*}
0\colon B &\rightarrow A\\
b &\mapsto 1 \otimes b.
\end{align*}
Since the action of $\Mult^+_B$ on $\field[V]$ preserves the homogeneous components, the map
\begin{align*}
\bm \colon A &\rightarrow A\\
x\otimes b & \mapsto S(x)\otimes b,
\end{align*}
where $S$ denotes the antipode of $\field[V]$, is a homomorphism of bialgebras which induces a morphism in $\cB \downarrow B$. 
By Proposition \ref{prop:F_AxA}, $A \otimes_B A = \spann\langle \sum x \otimes b\1 \otimes x' \otimes b\2 \mid x, x' \in \field[V], b \in B\rangle $, 
so we can define a map
\begin{align*}
\bp \colon A \otimes_B A &\rightarrow A\\
\sum x \otimes b\1 \otimes x' \otimes b\2 &\mapsto xx' \otimes b
\end{align*}
which induces a morphism in $\cB \downarrow B$ (beware that this implies that $\bp$ has to be a homomorphism of bialgebras). 
Finally, it is not difficult to check that $(A \stackrel{\pi}{\rightarrow} B, \bp, \bm, 0)$ is an abelian group in $\cB \downarrow B$.
\end{proof}

%
%
\section{Modules for Moufang loops revisited} \label{sec:Moufang_modules}
The representation theory of Moufang loops as exposed in \cite{Dharwadker1995} is a particular case of modules for loops in a variety. 
In this section we adopt a new approach based on the idea of Moufang elements in arbitrary loops. 
A \emph{Moufang element} in a loop $E$ is an element $a \in E$ satisfying
\begin{displaymath}
	a(x(ay)) = ((ax)a)y \quad \text{and} \quad ((xa)y)a = x(a(ya))
\end{displaymath}
for all $x,y \in E$ \cite{Phillips2009}. 
Note that $E$ might not be a Moufang loop, but Moufang elements are not effected by it. 
The set of all Moufang elements of a loop $E$ will be denoted by $M(E)$ and is always a Moufang loop. 
This means that a Moufang loop might strongly exhibit its nature even inside loops that are not Moufang.

\subsection{Relative modules for Moufang loops}
Given a Moufang loop $Q$, abelian groups in the comma category $\cL \downarrow Q$ of loops over $Q$ do not globally reflect the Moufang symmetry of $Q$. 
However, an abelian group in the comma category $\cM \downarrow Q$ of Moufang loops over $Q$ is subject to many restrictions 
that have nothing to do with $Q$ because we impose that all elements, not only those in $Q$, behave as Moufang elements. 
A compromise solution is to consider
\begin{definition}
	A \emph{relative representations} or \emph{relative module} of a Moufang loop $Q$ is an abelian group 
	$(E \stackrel{\pi}{\rightarrow} Q, \bp, \bm, 0)$  in $\cL \downarrow Q$ with $0(Q) \subseteq M(E)$.
\end{definition}
Obviously, usual representations of groups and Moufang loops are relative representations too. 
The proof of the following result is straightforward.
\begin{prop} \label{prop:relative_equations}
	Let $Q$ be a Moufang loop. We have that an abelian group $(E \stackrel{\pi}{\rightarrow} Q, \bp, \bm, 0)$ in $\cL \downarrow Q$ 
	is a relative representation of $Q$ if and only if the maps $r(a,b)$ and $s(a,b)$ defined in (\ref{eq:rs}) satisfy
	\begin{align*}
		r(a, c(bc)) &= r((ac)b,c)r(ac,b)r(a,c),\\
		r((ac)b,c) s(ac,b) &= s(a,c(bc)) s(c,bc) r(b,c),\\
		s(c,a(cb)) r(a,cb) &= r((ca)c,b)r(a,c)s(c,a) \\
		s((ca)c,b) &= s(c,a(cb))s(a,cb)s(c,b)
	\end{align*}
	for any $a,b,c \in Q$.
\end{prop}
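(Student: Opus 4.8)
The plan is to unwind the definition of relative representation into equations among the multiplication operators, and then recognize those equations as the Moufang identities written in terms of the maps $r(a,b)$ and $s(a,b)$. Recall from the review in Section~\ref{sec:Modules_for_loops} that under the identification $E \cong E_e \times Q$ the product is given by~(\ref{eq:Smith}), namely $(x_e,a)(y_e,b) = (r(a,b)x_e + s(a,b)y_e, ab)$, and that $0(a)$ corresponds to $(0_e,a)$. Thus requiring $0(Q) \subseteq M(E)$ amounts to imposing, for every $a \in Q$ and every $(x_e,c),(y_e,d) \in E_e \times Q$, the two Moufang-element identities
\begin{displaymath}
	0_a((x_e,c)(0_a(y_e,d))) = ((0_a(x_e,c))0_a)(y_e,d) \quad \text{and} \quad (((x_e,c)0_a)(y_e,d))0_a = (x_e,c)(0_a((y_e,d)0_a)).
\end{displaymath}

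First I would expand both sides of the left Moufang-element identity using~(\ref{eq:Smith}). On the $Q$-component both sides reduce to $a(c(ad))$ versus $((ac)a)d$, which are equal because $Q$ is Moufang, so the $Q$-component gives no condition. On the $E_e$-component, the left-hand side contributes a term linear in $x_e$ with coefficient $s(a,c(ad))r(c,ad)$ and a term linear in $y_e$ with coefficient $s(a,c(ad))s(c,ad)s(a,d)$; the right-hand side contributes $r((ac)a,d)s(ac,a)r(a,c)$ times $x_e$ and $s((ac)a,d)$ times $y_e$ (here one uses that $r(a,e)=\Id=s(e,b)$ and the definitions~(\ref{eq:rs}) repeatedly to simplify the compositions). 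Matching the $x_e$-coefficients and the $y_e$-coefficients separately yields the third and fourth displayed equations. Running the same computation on the right Moufang-element identity, and renaming $a,c,d$ appropriately, produces the first and second displayed equations. Conversely, if those four operator identities hold then the expansions above show $0_a$ is a Moufang element for every $a$, so the converse direction is automatic.

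The only genuinely delicate point is the bookkeeping in expanding the triple products: each nested product introduces a factor $r$ or $s$ whose arguments are themselves products of $a$'s and the $Q$-components, so one must repeatedly use $Q$'s Moufang law to make the arguments on the two sides literally match before comparing the $E_e$-coefficients. There is also the mild subtlety that one should check the four equations obtained are exactly the ones in the statement and not, say, left/right mirror images or compositions that telescope further; this is handled by using $r(a,e)=s(e,b)=\Id$ and $r(e,b)=R_b^{-1}R_b R_e = \Id$-type cancellations in $\U(Q;\cL)_e$. Since the paper calls the proof "straightforward", I would present only the expansion of one of the two Moufang-element identities in detail, match coefficients to extract two of the equations, and remark that the other two follow by the symmetric computation, noting that the $Q$-component identities are satisfied precisely because $Q$ is assumed Moufang.
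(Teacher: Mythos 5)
Your overall strategy is certainly the intended ``straightforward'' proof: pass to $E\cong E_e\times Q$, impose the two Moufang-element laws on $0_a$ against arbitrary $(x_e,c),(y_e,d)$, note the $Q$-components match because $Q$ is Moufang, and compare $E_e$-coefficients, the right law giving the first two identities and the left law the last two. However, the one coefficient you actually display is miscomputed: since $0_a(x_e,c)=(s(a,c)x_e,ac)$ (the $x_e$ sits in the \emph{second} slot of (\ref{eq:Smith})) and then $(0_ax)0_a=(r(ac,a)s(a,c)x_e,(ac)a)$, the $x_e$-coefficient of $((0_ax)0_a)(y_e,d)$ is $r((ac)a,d)\,r(ac,a)\,s(a,c)$, not $r((ac)a,d)\,s(ac,a)\,r(a,c)$ as you wrote; no amount of $r(a,e)=s(e,b)=\Id$ simplification turns one into the other.

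This matters because, with the bookkeeping done correctly, matching $x_e$-coefficients in the left law gives (after the renaming) $s(c,a(cb))\,r(a,cb)=r((ca)c,b)\,r(ca,c)\,s(c,a)$, whose middle factor is $r(ca,c)$, whereas the proposition's third identity has $r(a,c)$ there; the fourth identity does come out verbatim, but the third does not, so your claim that the expansion ``yields the third and fourth displayed equations'' has a genuine gap. To bridge it you need that $r(ca,c)$ and $r(a,c)$ act equally on $E_e$ (equivalently, since the first identity with $a=e$ gives $r(cb,c)r(c,b)=\Id$, that $r(a,c)r(c,a)=\Id$). In the forward direction this does follow from $0(Q)\subseteq M(E)$, e.g.\ via the $\Doro(Q)$-action of Proposition \ref{prop:action}: there $r(ca,c)=\rho_{(ca)c}^{-1}\rho_c\rho_{ca}=\rho_c^{-1}\rho_a^{-1}\rho_{ca}$, and $\rho_c\rho_{ac}^{-1}\rho_c=\rho_{c(ac)^{-1}c}=\rho_{a^{-1}c}$, $\rho_a\rho_{a^{-1}c}\rho_a=\rho_{ca}$ give $r(ca,c)=r(a,c)$; but this is an extra argument, not a by-product of the expansion. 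In the converse direction the issue recurs: assuming the four \emph{stated} identities, you must still recover the coefficient identities actually produced by the expansion (or otherwise verify the Moufang laws), so ``the converse direction is automatic'' is not justified as written. In short, the approach is right, but the proof as written fails exactly at the delicate bookkeeping you flagged, and it is missing the lemma $r(a,c)r(c,a)=\Id$ (on $E_e$) needed to reconcile the expansion with the third identity in the statement.
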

However, there is a much simpler description of these new representations.  
Consider $\Doro(Q)$, the group generated by $\{\lambda_a,\rho_a, \tau_a  \mid a \in Q \}$ subject to relations
\begin{displaymath}
	\begin{array}{llll}
		\lambda_e = 1, & \rho_e = 1, & \tau_e = 1, & \tau_a \lambda_a \rho_a = 1,\\
		\lambda_{aba} = \lambda_a\lambda_b\lambda_a,
		 & \rho_{aba} = \rho_a\rho_b\rho_a, & \tau_{aba} = \tau_a\tau_b\tau_a, &\\
		 \lambda_{a^{-1}b} = \tau_a\lambda_b\rho_a, & \rho_{a^{-1}b} = \lambda_a\rho_b\tau_a, & \tau_{a^{-1}b} = \rho_a \tau_b \lambda_a,\\
		 \lambda_{ba^{-1}} = \rho_a \lambda_b \tau_a, & \rho_{ba^{-1}} = \tau_a \rho_b \lambda_a, & \tau_{ba^{-1}} = \lambda_a\tau_b \rho_a. &	
	\end{array}
\end{displaymath}
This group has a venerable history in the theory of Moufang loops \cites{Glauberman1968,Doro1978,Mikheev1993,Grishkov2006,Hall2010,Benkart2013} 
and it is related with the idea of \emph{triality}. 
The following proposition is essentially the motivation to define $\Doro(Q)$ \cites{Doro1978,Glauberman1968}, so we omit the proof.
\begin{prop}
\label{prop:action}
	Let $Q$ be a Moufang loop and $E$ a loop such that $Q \subseteq M(E)$. There exists an action of $\Doro(Q)$ on $E$ determined by
\begin{displaymath}
\lambda_a x := ax, \quad \rho_a x := xa \quad \text{ and } \quad \tau_a x := (a\backslash x) /a.
\end{displaymath}
\end{prop}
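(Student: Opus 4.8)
The plan is to verify directly that the assignments $\lambda_a \mapsto L_a$, $\rho_a \mapsto R_a$, $\tau_a \mapsto T_a$, where $L_a x = ax$, $R_a x = xa$ and $T_a x = (a\backslash x)/a$, respect every defining relation of $\Doro(Q)$. Since $\Doro(Q)$ is presented by generators and relations, a well-defined action is exactly a check that each of the listed relations is satisfied as an identity of bijections of $E$. Note first that $L_a$ and $R_a$ are bijective since $E$ is a loop, and $T_a = R_a^{-1} L_a^{-1}$ when restricted appropriately — more precisely one checks $T_a x = (a\backslash x)/a$ is bijective with inverse $y \mapsto a(ya)$, so all three operators lie in the symmetric group on $E$.

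The relations in the top row are immediate: $L_e = R_e = \operatorname{Id}$ because $e$ is the unit, $T_e = \operatorname{Id}$ by the same token, and $T_a L_a R_a = 1$ amounts to $T_a(a(xa)) = x$, i.e. $(a\backslash(a(xa)))/a = (xa)/a = x$, which uses only the loop division identities and requires nothing about Moufang elements. For the second row, the relations $L_{aba} = L_a L_b L_a$ and $R_{aba} = R_a R_b R_a$ are precisely the left and right Moufang identities written operator-wise, and these hold for $a \in M(E)$ since $a(x(ay)) = ((ax)a)y$ gives $L_a L_x L_a = L_{(ax)a}$ after a small manipulation; the cleaner route is to recall (or quote from \cite{Phillips2009}) that $M(E)$ is a Moufang subloop, so the standard Moufang-loop consequences hold inside $M(E)$ and in particular $L_{aba} = L_a L_b L_a$, $R_{aba} = R_a R_b R_a$ hold, and $\tau_{aba} = \tau_a \tau_b \tau_a$ follows from the other two via $T = R^{-1} L^{-1}$ together with $R_{aba}^{-1} = R_a^{-1} R_b^{-1} R_a^{-1}$ etc. The third and fourth rows — the six relations of the form $\lambda_{a^{-1}b} = \tau_a \lambda_b \rho_a$ and its cyclic companions — are again standard identities valid in any Moufang loop: for instance $\lambda_{a^{-1}b} = \tau_a \lambda_b \rho_a$ reads $(a^{-1}b)x = (a\backslash((bx)a))/a$, which one verifies by rewriting $a^{-1} = a\backslash e$ and using the Moufang/Bol identities available in $M(E)$; here the only subtlety is that $x$ ranges over all of $E$, not just over $M(E)$, but every step only moves the Moufang element $a$ past products, so the Moufang identities for $a$ (which hold against arbitrary $x, y \in E$ by the very definition of Moufang element) suffice.

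The main obstacle, then, is bookkeeping rather than conceptual: one must be careful that although $a, b, c$ and their products $aba$, $a^{-1}b$, etc. are all Moufang elements (so the triality relations among $\lambda, \rho, \tau$ evaluated at those elements make sense and the Moufang laws apply to them), the elements being acted upon are arbitrary, so every verification has to be phrased using only the two defining Moufang-element identities $a(x(ay)) = ((ax)a)y$ and $((xa)y)a = x(a(ya))$ together with the loop axioms — one cannot invoke associativity or flexibility against a general $x$. In practice each of the twelve relations reduces, after substituting the operator definitions and clearing divisions, to one of these two identities (or a consequence derivable inside the Moufang subloop generated by $a$ and $b$, where the full Moufang calculus is legitimate). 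Since the proposition asserts this is "essentially the motivation to define $\Doro(Q)$" and cites \cites{Doro1978,Glauberman1968}, I would present the reduction of one representative relation from each row in full and remark that the remaining cases are entirely analogous, referring to the cited literature for the classical Moufang-loop identities used.
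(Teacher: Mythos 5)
The paper gives no proof of this proposition (it is omitted with a pointer to Doro and Glauberman), and your overall plan --- check that $L_a$, $R_a$ and $T_a=R_a^{-1}L_a^{-1}$ satisfy every defining relation of the presentation of $\Doro(Q)$ --- is exactly the intended argument. Your treatment of the first row is correct, and so is the direct verification of $\lambda_{aba}=\lambda_a\lambda_b\lambda_a$ and $\rho_{aba}=\rho_a\rho_b\rho_a$: these are literally the two Moufang-element identities for $a$ read as operator identities on $E$, with the middle entry arbitrary.

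Two of your justifications, however, do not close as written. First, the ``cleaner route'' via the fact that $M(E)$ is a Moufang subloop is invalid: an identity that holds \emph{inside} $M(E)$ only says the two operators agree on the subset $M(E)$, whereas the relations of $\Doro(Q)$ must hold as bijections of all of $E$; so this shortcut has to be dropped (your direct route is the correct one). Second, deducing $\tau_{aba}=\tau_a\tau_b\tau_a$ from the other two relations ``via $T=R^{-1}L^{-1}$'' requires the equality $R_a^{-1}R_b^{-1}R_a^{-1}L_a^{-1}L_b^{-1}L_a^{-1}=R_a^{-1}L_a^{-1}R_b^{-1}L_b^{-1}R_a^{-1}L_a^{-1}$, i.e.\ an interchange law mixing $a$ and $b$ against arbitrary arguments, which is not one of the two defining identities and is not supplied; the same vagueness affects rows three and four, where ``moving $a$ past products'' is not enough by itself. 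What is missing is the derived lemma that makes all of these verifications work against arbitrary $x\in E$: for a Moufang element $a$ one has a two-sided inverse with $L_a^{-1}=L_{a^{-1}}$, $R_a^{-1}=R_{a^{-1}}$, $L_aR_a=R_aL_a$, and the two defining laws, rewritten, say that $a(uv)=((au)a)(a\backslash v)$ and $(uv)a=(u/a)(a(va))$ for all $u,v\in E$; composing these gives the middle law $a((uv)a)=(au)(va)$ for all $u,v\in E$. With this, e.g.\ $\lambda_{a^{-1}b}=\tau_a\lambda_b\rho_a$ reduces to the middle law applied to the pair $(a^{-1}b,\,x)$, and $\tau_{aba}=\tau_a\tau_b\tau_a$ follows because both $L_aR_aL_bR_bL_aR_a$ and $L_{aba}R_{aba}$ are maps $\gamma$ satisfying $\gamma(uv)=(L_{aba}u)(R_{aba}v)$ for all $u,v$ (for the first use the middle laws of $a$ and $b$ together with the already proved second-row relations; for the second use that $aba\in M(E)$, since $M(E)$ is a subloop), and such a $\gamma$ is unique because $E$ is unital (take $v=e$). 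So the plan is salvageable, but this lemma must be stated and used; as it stands, the reductions claimed for the $\tau$-relation and for rows three and four would fail.
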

However, the group which best describes relative representations of a Moufang loop $Q$ is the following subgroup of $\Doro(Q)$:
\begin{displaymath}
	\Doro(Q)_e := \textrm{ subgroup of } \Doro(Q) \textrm{ generated by } 
	\left\{ \rho^{-1}_{ab}\rho_b\rho_a \, , \, \rho^{-1}_{ab}\lambda_a\rho_b \mid a, b \in Q \right\}.
\end{displaymath}

\begin{prop}
Relative representations of a Moufang loop $Q$ are equivalent to $\Doro(Q)_e$-modules.
\end{prop}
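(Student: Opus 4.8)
The plan is to mirror, in the category $\cL$, exactly the reduction already carried out for $\U(Q;\cV)_e$-modules in the discussion of modules for loops in a variety, but with the variety-specific group $\U(Q;\cV)_e$ replaced by $\Doro(Q)_e$. By the general theory reviewed in Section~\ref{sec:Modules_for_loops}, any abelian group $(E\stackrel{\pi}{\rightarrow}Q,\bp,\bm,0)$ in $\cL\downarrow Q$ is determined by the abelian group $E_e$ together with the action of $\U(Q;\cL)_e$ on it by automorphisms, the product on $E\cong E_e\times Q$ being given by~(\ref{eq:Smith}) in terms of the operators $r(a,b)=R^{-1}_{ab}R_bR_a$ and $s(a,b)=R^{-1}_{ab}L_aR_b$ of~(\ref{eq:rs}). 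So a $Q$-module is the same as an abelian group $V=E_e$ equipped with operators $r(a,b),s(a,b)\in\Aut(V)$ satisfying whatever relations hold among the corresponding elements of $\U(Q;\cL)_e$. The relative condition $0(Q)\subseteq M(E)$ adds exactly the four families of identities in Proposition~\ref{prop:relative_equations}; thus a relative $Q$-module is an abelian group $V$ with operators $r(a,b),s(a,b)$ subject to those four families of relations. The claim, then, is that the quotient of $\Z\U(Q;\cL)_e$ by the ideal enforcing those relations is exactly the group algebra $\Z\Doro(Q)_e$; equivalently, that the group presented by generators $r(a,b),s(a,b)$ and the relations of Proposition~\ref{prop:relative_equations} is isomorphic to $\Doro(Q)_e$.

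First I would set up the comparison homomorphism. Using Proposition~\ref{prop:action}, for a loop $E$ with $Q\subseteq M(E)$ the group $\Doro(Q)$ acts on $E$ via $\lambda_a=L_a$, $\rho_a=R_a$, $\tau_a\colon x\mapsto(a\backslash x)/a$; restricting this action to the fiber $E_e$ (over which everything is based) and using that $0$ embeds $Q$ into $M(E)$, one gets that $\rho^{-1}_{ab}\rho_b\rho_a$ and $\rho^{-1}_{ab}\lambda_a\rho_b$ act on $E_e$ precisely as $r(a,b)$ and $s(a,b)$. Hence there is a surjection from $\Doro(Q)_e$ onto the subgroup of $\Aut(E_e)$ generated by the $r(a,b),s(a,b)$, compatibly with the module structures; this already shows that every $\Doro(Q)_e$-module yields, via the construction of Section~\ref{sec:Modules_for_loops}, a relative $Q$-module. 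The substantive direction is the converse: I must check that the generators $\rho^{-1}_{ab}\rho_b\rho_a$ and $\rho^{-1}_{ab}\lambda_a\rho_b$ of $\Doro(Q)_e$ satisfy the four families of relations in Proposition~\ref{prop:relative_equations} inside $\Doro(Q)$ itself, and that these relations \emph{suffice} to present $\Doro(Q)_e$ — i.e. that no relation of $\Doro(Q)_e$ is lost. The first part is a direct computation using the defining relations of $\Doro(Q)$ (the $\lambda_{aba}=\lambda_a\lambda_b\lambda_a$ family, the triality relations $\lambda_{a^{-1}b}=\tau_a\lambda_b\rho_a$, etc.) rewritten in terms of the $r,s$ generators — it is the infinitesimal shadow of the Moufang identities and is essentially forced. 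For the sufficiency I would invoke the fact, already used in the excerpt for varieties and documented in Section~10.5 of~\cite{Smith2007}, that the relevant ``multiplication group at $e$'' is presented by the relations expressing that $r,s$ come from a loop structure on $V\times Q$ lying in the appropriate class; here the appropriate class is ``$Q$ embeds as Moufang elements,'' whose translation into $r,s$ is exactly Proposition~\ref{prop:relative_equations}.

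The main obstacle I anticipate is precisely pinning down this last presentation claim: one must verify that $\Doro(Q)_e$ has no relations beyond those forced by Proposition~\ref{prop:relative_equations}, i.e. that the natural map from the abstractly presented group onto $\Doro(Q)_e$ is injective. The cleanest route is to show that $\Doro(Q)_e$ is the exact analogue, for the ``Moufang-element'' condition, of the group $\U(Q;\cV)_e$ for a variety $\cV$: namely, $\Doro(Q)$ is generated by $L_a,R_a$ (the $\tau_a$ being expressible through $\tau_a=\rho_a^{-1}\lambda_a^{-1}$ on the nose is false, but $\tau_a\lambda_a\rho_a=1$ gives $\tau_a=(\lambda_a\rho_a)^{-1}$), so $\Doro(Q)$ is generated by $\{\lambda_a,\rho_a\mid a\in Q\}$, and $\Doro(Q)_e$ is its stabilizer-of-$e$ subgroup generated by the listed elements; the relations of $\Doro(Q)$ then correspond exactly to the Moufang-element axioms satisfied by $Q$ inside any enveloping loop, and the reduction ``relations of $\U(Q)_e$ $=$ relations forced on $r,s$'' from~\cite{Smith2007} applies verbatim once ``$\cV$'' is replaced by ``loops in which $Q$ consists of Moufang elements.'' Granting that reduction, combining it with the computation that the $\Doro(Q)$-relations reproduce Proposition~\ref{prop:relative_equations} closes the equivalence; the functoriality of both constructions (on morphisms: a morphism of relative $Q$-modules is a morphism of the underlying abelian groups commuting with all $r(a,b),s(a,b)$, hence with the $\Doro(Q)_e$-action, and conversely) gives the categorical equivalence.
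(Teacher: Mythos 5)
Your first two ingredients are the right ones: via Proposition \ref{prop:action} the designated elements $\rho^{-1}_{ab}\rho_b\rho_a$ and $\rho^{-1}_{ab}\lambda_a\rho_b$ act on $E_e$ exactly as the maps $r(a,b)$, $s(a,b)$ of (\ref{eq:rs}), and one must check by computation in $\Doro(Q)$ that these elements satisfy the four families of Proposition \ref{prop:relative_equations}. The genuine gap is in how you propose to ``close the equivalence'': you reduce the proposition to the claim that $\Doro(Q)_e$ is \emph{presented} by the generators $r(a,b),s(a,b)$ subject exactly to those relations (equivalently, that the corresponding quotient of $\Z\U(Q;\cL)_e$ is the group algebra $\Z\Doro(Q)_e$). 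That is a far stronger statement than the proposition, you do not prove it, and the justification you offer --- Smith's Section 10.5 with $\cV$ replaced by ``loops in which $Q$ consists of Moufang elements'' --- does not apply verbatim: that class is not a variety, since the defining condition refers to the fixed subloop $0(Q)$ rather than to identities satisfied by all elements, so there is no free product in it and no universal multiplication group $\U(Q;\cV)$ with the required property; and even for honest varieties Smith's theorem identifies the representing object only as a quotient of $\Z\U(Q;\cV)_e$, not as the group algebra of a group with a prescribed presentation. As written, the central ``sufficiency'' step of your argument is an unproven assertion.

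Moreover, the presentation claim is unnecessary, which is how the paper's proof avoids the difficulty. A group action is determined by its values on generators, and $\Doro(Q)_e$ is \emph{by definition} generated by the designated elements. So one argues directly: given a relative representation, Proposition \ref{prop:action} makes $E_e$ a $\Doro(Q)_e$-module; conversely, given a $\Doro(Q)_e$-module $E_e$, formula (\ref{eq:Smith}) with $r(a,b),s(a,b)$ the images of the designated elements produces an abelian group in $\cL\downarrow Q$, and $0(Q)\subseteq M(E)$ follows from Proposition \ref{prop:relative_equations} once the four families are verified inside $\Doro(Q)$ (this computation must actually be carried out; e.g.\ the second family reduces, after cancellation, to the identity $\rho_c\lambda_{ac}=\lambda_a\lambda_c\rho_c$ in $\Doro(Q)$). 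The two assignments are mutually inverse precisely because the generators' actions are matched --- no comparison of presentations, and no appeal to a relatively free object, is needed. A small side slip: from $\tau_a\lambda_a\rho_a=1$ one gets $\tau_a=(\lambda_a\rho_a)^{-1}=\rho_a^{-1}\lambda_a^{-1}$, so the identity you call ``false on the nose'' is in fact the one you then use (and the one the paper uses in its own computation).
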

\begin{proof}
Since for any relative representation $(E \stackrel{\pi}{\rightarrow} Q, \bp, \bm, 0)$ we have $0(Q) \subseteq M(E)$,
$\Doro(Q)$ acts on $E$ as in Proposition \ref{prop:action}. 
$E_e$ also becomes a $\Doro(Q)_e$-module under this action. 
Conversely, starting with a $\Doro(Q)_e$-module $E_e$, we consider the loop $E = E_e \times Q$ defined by (\ref{eq:Smith}), 
where in this case
\begin{displaymath}
	r(a,b) := \rho^{-1}_{ab} \rho_b \rho_a \qquad s(a,b) := \rho^{-1}_{ab}\lambda_a\rho_b
\end{displaymath}
belong to $\Doro(Q)_e$. 
The action of $\U(Q;\cL)$ on $E$ factors through $\Doro(Q)$, so the induced action of $\U(Q;\cL)_e$ on $E_e$ factors through $\Doro(Q)_e$. 
The elements $r(a,b), s(a,b)$ in (\ref{eq:rs}) act on $E_e$ as the elements $r(a,b), s(a,b) \in \Doro(Q)_e$ we just defined. 
Moreover, we can use the relations in $\Doro(Q)$ to check that $Q \subseteq M(E)$ (recall that $Q$ is identified with $0 \times 0(Q)$). 
Thus we only have to check the relations in Proposition \ref{prop:relative_equations}. 
We write in detail the proof of the first one; the rest follow by similar computations.
We have that
\begin{align*}
	r((ac)b,c)s(ac,b) &= \rho^{-1}_{((ac)b)c}\rho_c\rho_{(ac)b}\rho^{-1}_{(ac)b} \lambda_{ac}\rho_b \\
	&=  \rho^{-1}_{((ac)b)c}\rho_c \lambda_{ac}\rho_b = \rho^{-1}_{a(c(bc))}\rho_c \lambda_{ac}\rho_b \\[2pt]
	s(a,c(bc))s(c,bc)r(b,c) &= \rho^{-1}_{a(c(bc))}\lambda_a\rho_{c(bc)}\rho^{-1}_{c(bc)}\lambda_c\rho_{bc}\rho^{-1}_{bc} \rho_c \rho_b\\
	&= \rho^{-1}_{a(c(bc))}\lambda_a\lambda_c\rho_c \rho_b,
\end{align*}
so we need to check that $\rho_c\lambda_{ac} = \lambda_a \lambda_c \rho_c$. 
This is a direct consequence of the relations $\lambda_{yx^{-1}} = \rho_x \lambda_y \tau_x$, $\lambda_{x^{-1}} = \lambda^{-1}_x$, 
$\rho_{x^{-1}} = \rho^{-1}_x$ and $\tau_x = \rho^{-1}_x\lambda^{-1}_x$ in $\Doro(Q)$.
\end{proof}
\subsubsection*{Examples}
Consider a group $G$ and the \emph{group of autotopisms} of $G$ \cite{Benkart2013}
\begin{displaymath}
	\Atp(G) := \{ (\phi_1,\phi_2,\phi_3) \mid \phi_1(ab) = \phi_2(a)\phi_3(b) \;\forall_{a,b \in G}\}.
\end{displaymath}
Examples of elements in $\Atp(G)$ are $(L_a, L_a, \Id), (R_a, \Id, R_a), (\Id, R^{-1}_a,L_a)$. 
There exists a homormorphism $\Doro(G) \rightarrow \Atp(G)$ (see \cite{Benkart2013}) determined by
\begin{displaymath}
	\lambda_a \mapsto (L_a,P^{-1}_a,L^{-1}_a) \qquad \rho_a \mapsto (R_a, R^{-1}_a,P^{-1}_a).
\end{displaymath} 
The image of $\Doro(G)_e$ under this homomorphism lies in the subgroup generated by
$(L_aR^{-1}_a,L_a,R^{-1}_a)$ and $(\Id, R^{-1}_b,L^{-1}_b)$, 
which is isomorphic to $G \times G$ by
\begin{displaymath}
	(a,b) \mapsto (L_bR^{-1}_b, L_bR^{-1}_a,R^{-1}_bL_a).
\end{displaymath}
Therefore, via the homomorphism $\Doro(G) \rightarrow \Atp(G)$, any module $V$ for $G \times G$ can be viewed as a module for $\Doro(G)_e$. 
The image of $r(a,b)$ and $s(a,b)$ are the autotopisms
\begin{displaymath}
	(\Id,R^{-1}_{b^{-1}a^{-1}ba}, L_{b^{-1}a^{-1}ba}) \quad \text{and} \quad (L_aR^{-1}_a,L_a,R^{-1}_a)(\Id,R^{-1}_{b^{-1}a^{-2}b}, L_{b^{-1}a^{-2}b})
\end{displaymath}
respectively. 
Hence, $r(a,b), s(a,b)$ act on $V$ as $ (b^{-1}a^{-1}ba,e)$ and $(b^{-1}a^{-2}b,a)$ respectively, and we obtain a relative representation of $G$.
\begin{prop}
	Let $G$ be a group and $V, W$ be linear representations of $G$. The set $E = V \otimes W \times G$ with product
	\begin{displaymath}
		\left(\sum v_i \otimes w_i,a\right) \hskip -3pt  \left(\sum v'_j \otimes w'_j,b\right) \hskip -2pt  
		= 
		\hskip -2pt \left(\sum b^{-1}a^{-1}bav_i \otimes w_i \hskip -1pt  + \hskip -1pt  \sum b^{-1}a^{-2}b v'_j \otimes a w'_j,ab \right)
	\end{displaymath}
	is a loop such that $G \cong 0\otimes 0 \times G \subseteq M(E)$.
\end{prop}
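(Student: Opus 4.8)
The plan is to realise $E$ as the loop produced by Smith's construction (\ref{eq:Smith}) from a suitable $\Doro(G)_e$-module, namely $V\otimes W$ equipped with the module structure exhibited in the Examples above. Concretely, I would first give $V\otimes W$ its natural $G\times G$-module structure $(a,b)\cdot(v\otimes w)=av\otimes bw$, and then pull it back along the homomorphism $\Doro(G)\to\Atp(G)$ ($\lambda_a\mapsto(L_a,P_a^{-1},L_a^{-1})$, $\rho_a\mapsto(R_a,R_a^{-1},P_a^{-1})$) followed by the identification of the image of $\Doro(G)_e$ with $G\times G$ described there. This makes $V\otimes W$ a unital left $\Doro(G)_e$-module.

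Next I would invoke the proposition asserting that relative representations of a Moufang loop $Q$ are equivalent to $\Doro(Q)_e$-modules, applied to $Q=G$. It yields a relative representation $(E\stackrel{\pi}{\rightarrow}G,\bp,\bm,0)$ with underlying set $(V\otimes W)\times G$ and product given by (\ref{eq:Smith}), where $r(a,b)=\rho^{-1}_{ab}\rho_b\rho_a$ and $s(a,b)=\rho^{-1}_{ab}\lambda_a\rho_b$ act through $\Doro(G)_e$. By definition of relative representation this $E$ is a loop, the section $0$ embeds $G$ as the subloop $0\otimes 0\times G\cong G$, and $0\otimes 0\times G\subseteq M(E)$; moreover the Moufang-element identities and the loop axioms come for free from that equivalence, so there is nothing to verify against Proposition \ref{prop:relative_equations} directly.

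It then remains only to check that this abstract product coincides with the displayed one. Here I would reuse the computation from the Examples: under the above identification, $r(a,b)$ acts on $V\otimes W$ as $(b^{-1}a^{-1}ba,e)$, i.e. $v\otimes w\mapsto (b^{-1}a^{-1}ba)v\otimes w$, and $s(a,b)$ acts as $(b^{-1}a^{-2}b,a)$, i.e. $v\otimes w\mapsto (b^{-1}a^{-2}b)v\otimes aw$. Substituting these into $(x_e,a)(y_e,b)=(r(a,b)x_e+s(a,b)y_e,ab)$ with $x_e=\sum v_i\otimes w_i$ and $y_e=\sum v'_j\otimes w'_j$ reproduces the product in the statement verbatim.

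The only place where genuine computation enters — and hence the main obstacle — is the verification of those two formulas for $r(a,b)$ and $s(a,b)$. This amounts to tracking the generators $\lambda_a,\rho_a$ through $\Doro(G)\to\Atp(G)$, simplifying the words $\rho^{-1}_{ab}\rho_b\rho_a$ and $\rho^{-1}_{ab}\lambda_a\rho_b$ using the defining relations of $\Doro(G)$ (much as in the proof of the previous proposition, where $\rho_c\lambda_{ac}$ is reduced to $\lambda_a\lambda_c\rho_c$), identifying the resulting autotopisms with elements of $G\times G$ via $(a,b)\mapsto(L_bR^{-1}_b,L_bR^{-1}_a,R^{-1}_bL_a)$, and finally reading off their action on $V\otimes W$. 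Everything downstream of this is either bookkeeping or already supplied by the general machinery of Section \ref{sec:Modules_Formal_Loops} and the equivalence with $\Doro(G)_e$-modules.
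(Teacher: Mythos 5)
Your proposal is correct and follows essentially the same route as the paper: the proposition is exactly the specialization of the preceding Examples discussion, where the natural $G\times G$-module structure on $V\otimes W$ is restricted along $\Doro(G)_e\to\Atp(G)$, the equivalence between relative representations of $G$ and $\Doro(G)_e$-modules supplies the loop structure and $0\otimes 0\times G\subseteq M(E)$, and the stated actions of $r(a,b)$ and $s(a,b)$ as $(b^{-1}a^{-1}ba,e)$ and $(b^{-1}a^{-2}b,a)$ yield the displayed product. The autotopism computation you flag as the remaining work is precisely the (routine) verification the paper itself records only by stating the images of $r(a,b)$ and $s(a,b)$, so your level of detail matches the source.
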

Notice that if $G$ is simple non-abelian and $V$ is faithful then $G \cap \Na(E) = \{ e \}$, 
where $\Na(E) := \{a \in E \mid (ay)z = a(yz),\, (xa)z = x(az),\, (xy)a = x(ya)\,\, \forall x,y,z \in E \}$ 
denotes the \emph{associative nucleus} of $E$.

\subsection{Relative modules for formal Moufang loops}
Now we will extend the notion of relative module to a formal setting. 
Recall that given a non-associative algebra $A$, the \emph{generalized alternative nucleus} of $A$ is defined as
\begin{displaymath}
	\Nalt(A):= \{ a \in A \mid (a,y,z) = -(y,a,z) = (y,z,a) \quad \forall y,z \in A \}
\end{displaymath}
where $(x,y,z) := (xy)z - x(yz)$ denotes the \emph{associator} of $x,y$ and $z$ \cites{Morandi2001,Perez-Izquierdo2004}. 
$\Nalt(A)$ is closed under the commutator product $[a,b]:= ab - ba$ and it is a \emph{Malcev algebra} with this product.

A \emph{formal Moufang loop} is a formal loop $F \colon \field[\m \times \m] \rightarrow \m$ satisfying the identities
\begin{displaymath}
	\xx (\yy(\xx \zz)) = ((\xx \yy)\xx)\zz \qquad ((\zz\xx)\yy)\xx = \zz (\xx(\yy \xx)).
\end{displaymath}
In other words, the bialgebra $\field[\m]$ with product $zz' := F'(z\otimes z')$ satisfies
\begin{displaymath}
	\sum z\1 (u (z \2 u)) = \sum ((z\1u)z\2)v \qquad \sum ((vz\1)u)z\2 = \sum v(z\1(u z\2)),
\end{displaymath}
so $\m$ is a Malcev algebra with the commutator product and $\field[\m]$ is isomorphic to $U(\m)$, 
the universal enveloping algebra of $\m$ \cites{Perez-Izquierdo2004,Perez-Izquierdo2007}, 
an algebra with a universal property with respect to homomorphisms $\m \rightarrow \Nalt(A)$ of Malcev algebras.
\begin{definition}
	A \emph{relative module} for the formal Moufang loop $F\colon \field[\m \times \m] \rightarrow \m$ is an abelian group $( G \stackrel{\pi}{\rightarrow} F, \bp, \bm, 0)$ in the comma category of formal loops over $F$ satisfying
	\begin{displaymath}
	 	0(\xx) (\yy(0(\xx) \zz)) = ((0(\xx) \yy)0(\xx))\zz, \qquad ((\zz 0(\xx))\yy)0(\xx) = \zz (0(\xx)(\yy 0(\xx))),
	\end{displaymath}
	where $0(\xx)$ denotes the composition of the formal maps $0$ and $\xx$.
\end{definition}
\begin{definition}
	A \emph{relative module} for a Moufang Hopf algebra $B$ is an abelian group in $(A\stackrel{\pi}{\rightarrow} B, \bp, \bm, 0)$ 
	in $\cB \downarrow B$ satisfying
	 \begin{align}
		\label{eq:relative1}	\sum b\1 (u (b \2 v)) &= \sum (( b\1 u) b\2)v \\
		\label{eq:relative2}	\sum ((v b\1)u) b\2 &= \sum v( b\1(u  b\2))
	\end{align}
	for any $b \in B$ and $u, v \in A$.
\end{definition}
Thus, thanks to the equivalence of categories of formal loops and irreducible bialgebras, 
we have that the category of relative modules for a formal Moufang loop $F\colon \field[\m \times \m] \rightarrow \m$ 
is equivalent to the category of relative modules for the Moufang Hopf algebra $U(\m)$.
\begin{prop} \label{prop:Nalt}
	An abelian group $(A\stackrel{\pi}{\rightarrow} U(\m), \bp, \bm, 0)$ in $\cB \downarrow U(\m)$ 
	is a relative module for $U(\m)$ if and only if $0(\m) \subseteq \Nalt(A)$.
\end{prop}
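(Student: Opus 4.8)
The plan is to prove the two implications separately. The necessity of $0(\m)\subseteq\Nalt(A)$ will come from specializing the relative identities (\ref{eq:relative1})--(\ref{eq:relative2}) to primitive elements, and the sufficiency from an induction along the coradical filtration of $U(\m)$ that mirrors the argument showing $U(\m)$ itself is Moufang. Throughout I identify $m\in\m$ with its image $0(m)\in A$, as in the text.

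For the \emph{only if} direction I would evaluate (\ref{eq:relative1}) and (\ref{eq:relative2}) on a primitive $m\in\m$, for which $\Delta(m)=m\otimes 1+1\otimes m$. Equation (\ref{eq:relative1}) then collapses to $m(uv)+u(mv)=(mu)v+(um)v$, i.e. $(m,u,v)=-(u,m,v)$, and (\ref{eq:relative2}) collapses to $(vm)u+(vu)m=v(mu)+v(um)$, i.e. $(v,m,u)=-(v,u,m)$, for every $u,v\in A$. The first family gives $(m,y,z)=-(y,m,z)$ and, combined with the second, also $(m,y,z)=(y,z,m)$; these are exactly the defining conditions for $m\in\Nalt(A)$, so $0(\m)\subseteq\Nalt(A)$.

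For the \emph{if} direction, assume $0(\m)\subseteq\Nalt(A)$. For fixed $u,v\in A$ each side of (\ref{eq:relative1}) is linear in $b$, so since $U(\m)$ is irreducible it suffices to verify the identity with $b$ ranging over the layers of the coradical filtration. The bottom layer $\field 1$ is trivial, and the primitive layer is precisely the family extracted in the previous paragraph, which now holds by hypothesis. For the inductive step I would use that $U(\m)$ is generated as an algebra by $\m$ to express, modulo lower filtration degree, a degree-$n$ element as a combination of products $wm$ with $\deg w=n-1$ and $m\in\m$; then $\Delta(wm)=\sum w\1 m\otimes w\2+\sum w\1\otimes w\2 m$, whose extreme summands ($w\2\in\field 1$, resp. $w\1\in\field 1$) reproduce $wm\otimes 1$ and $1\otimes wm$, while every other summand lies in strictly lower filtration degree in both tensor factors. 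Substituting this coproduct into (\ref{eq:relative1}) and using the inductive hypothesis together with the fact that the associator of $A$ is alternating on $m$, one rewrites the left-hand side as the right-hand side. This is the same manipulation that proves the identities $\sum z\1(u(z\2 v))=\sum((z\1 u)z\2)v$ for $U(\m)$ recalled above \cite{Perez-Izquierdo2004}; the sole property of $\m$ it uses is that $\m$ lies inside $\Nalt$ of the ambient algebra, here $\Nalt(A)$, so it goes through verbatim with $u,v$ ranging over all of $A$ rather than over $0(U(\m))$. Equation (\ref{eq:relative2}) is proved by the same induction, with left and right multiplications interchanged, the hypothesis $0(\m)\subseteq\Nalt(A)$ being left--right symmetric.

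I expect the inductive step to be the main obstacle. Because the coproduct of a degree-$n$ element still contains $b\otimes 1+1\otimes b$ with $b$ of degree $n$, the induction cannot be run through the reduced coproduct and has to be threaded through the algebra generators $\m$; the bookkeeping of the many associators that appear, and their collapse using only the alternating property of $m$ in $\Nalt(A)$ together with cocommutativity, is where the work concentrates.
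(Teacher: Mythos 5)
Your plan is essentially the paper's own: the paper omits the argument, citing the proof of \cite[Theorem 14]{Perez-Izquierdo2007}, which proceeds exactly as you describe --- the \emph{only if} direction by specializing (\ref{eq:relative1})--(\ref{eq:relative2}) to a primitive $b=0(m)$, where they collapse to the two defining conditions of $\Nalt(A)$, and the \emph{if} direction by an induction along the filtration of $U(\m)$ threaded through the generators $\m$, which transfers to arbitrary $u,v\in A$ precisely because the operator identities it needs (the concrete analogues on $A$ of Proposition \ref{prop:lambdaRecursive} for $L_{0(a)},R_{0(a)}$) only require $0(a)\in\Nalt(A)$, not any hypothesis on the other arguments. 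Your primitive-element computation is correct, and your remark that the induction must run through products $wm$ with $m\in\m$ rather than through the reduced coproduct is exactly the right point of care.
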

\begin{proof}
The proof is similar to the proof of \cite[Theorem 14]{Perez-Izquierdo2007} so we omit it.
\end{proof}
\subsubsection{Relative modules for $U(\m)$ are $U(\Lie(\m)_+)$-modules} \label{subsect:relative_modules}
The goal in \cite{Perez-Izquierdo2004} was to understand whether any Malcev algebra can be constructed as a 
Malcev subalgebra of $\Nalt(A)$  for some non-associative algebra $A$. 
A natural construction in this context is the Lie algebra $\Lie(\m)$ generated by abstract symbols 
$\lambda_a, \rho_a$ $a \in \m$ subject to relations
\begin{displaymath}
	\begin{array}{ll}
		\lambda_{\alpha a + \alpha' a'} = \alpha \lambda_a + \alpha' \lambda_{a'}, 
		& \rho_{\alpha a + \alpha' {a'}} = \alpha \rho_a + \alpha' \rho_{a'}, 
		\cr
		[\lambda_a,\lambda_{a'}] = \lambda_{[a,{a'}]} - 2 [\lambda_a,\rho_{a'}], 
		& [\rho_a, \rho_{a'}] = -\rho_{[a,{a'}]} - 2[\lambda_a,\rho_{a'}], 
		\cr
		[\lambda_a, \rho_{a'}] = [\rho_a,\lambda_{a'}] &
	\end{array}
\end{displaymath}
for all $\alpha,\alpha' \in \field$ and $a,{a'} \in \m$.
This algebra $\Lie(\m)$ models the action of the left and right multiplication operators $L_a, R_a$ by elements $a \in \Nalt(A)$ \cite{Morandi2001}. 
In the sight of Proposition \ref{prop:Nalt}, this Lie algebra must play a relevant role here too. 
The most useful construction in our setting of relative modules for $U(\m)$ is
\begin{displaymath}
	\Lie(\m)_+ :=\textrm{ Lie subalgebra of } \Lie(\m) \textrm{ generated by } \{ \ad_a:= \lambda_a - \rho_a \mid a \in \m\}.
\end{displaymath}
For any relative module $(A\stackrel{\pi}{\rightarrow} U(\m), \bp, \bm, 0)$ the homomorphism $\Mult_{U(\m)} \rightarrow \Endo(A)$ 
determined by $\llambda_z \mapsto L_z$ and $\rrho_z \mapsto R_z$ factors through $U(\Lie(\m))$ 
and its restriction to $\Mult^+_{U(\m)}$ factors through $U(\Lie(\m)_+)$. 
Hence, relative modules of $U(\m)$ are $\Lie(\m)_+$-modules.

Consider $\{(((a_{i_1}a_{i_2})\cdots )a_{i_n} \mid i_1 \leq i_2\leq \cdots \leq i_n, n \geq 0\}$, a Poincar\'e-Birkhoff-Witt basis of $U(\m)$ 
\cite{Perez-Izquierdo2004}, and define the following elements in $U(\Lie(\m))$
\begin{align}
	\lambda_{((a_{i_1}a_{i_2})\cdots) a_{i_n}} 
	&:= \lambda_{((a_{i_1}a_{i_2})\cdots )a_{i_{n-1}}} \lambda_{a_{i_n}} + [\lambda_{((a_{i_1}a_{i_2})\cdots )a_{i_{n-1}}},\rho_{a_{i_n}}]
	\cr
	\rho_{((a_{i_1}a_{i_2})\cdots) a_{i_n}} 
	&:= \rho_{a_{i_n}} \rho_{((a_{i_1}a_{i_2})\cdots )a_{i_{n-1}}} + [\lambda_{a_{i_n}},  \rho_{((a_{i_1}a_{i_2})\cdots )a_{i_{n-1}}}]
\end{align}
with $\lambda_1 := 1$ and $\rho_1 := 1$, and extend them by linearity to elements $\lambda_z, \rho_z \in U(\Lie(\m))$. 
With this definition $\lambda_z$ and $\rho_z$ act as the left and right multiplication operators by $z$ on $U(\m)$ \cite{Morandi2001}. 
Thus we have a commuting diagram
\begin{center}
	\begin{tikzpicture}
		\node(ll) at (-1,0.7) {$\Mult_{U(\m)}$};
		\node(rr) at (1,0.7) {$\Endo(A)$};
		\node(dd) at (0,-0.7) {$U(\Lie(\m))$};
		
		\draw[->] ($(ll.east)$) -- node[above]{} ($(rr.west)$);
		\draw[->] ($(ll.south)$) -- node[left]{} ($(dd.north)$);
		\draw[->] ($(dd.north)$) -- node[right]{}($(rr.south)$);
	\end{tikzpicture}
\end{center}
where the homomorphism $\Mult_{U(\m)} \rightarrow U(\Lie(\m))$ is determined by $\llambda_z \mapsto \lambda_z$ and $\rrho_z \mapsto \rho_z$. 
Prior to checking that this homomorphism restricts to a homomorphism $\Mult^+_{U(\m)} \rightarrow U(\Lie(\m)_+)$ 
we give some properties of the elements $\lambda_z, \rho_z$ we introduced.

There is an automorphism $\sigma$ of $U(\Lie(\m))$ \cite{Perez-Izquierdo2004} determined on the generators by
\begin{displaymath}
	\lambda_a \mapsto - \rho_a, \qquad \rho_a \mapsto - \lambda_a.
\end{displaymath}
The composition of this automorphism with the antipode $S$ of $U(\Lie(\m))$ gives an anti-automorphism $\sigma S$ 
interchanging $\lambda_x$ and $\rho_x$.
\begin{prop} \label{prop:lambdaRecursive}
	For any $z \in U(\m)$ and $a \in \m$ we have that
	\begin{alignat*}{2}
		\lambda_{za} = \lambda_z \lambda_a + [\lambda_z, \rho_a], \qquad
		\rho_{za} = \rho_a \rho_z + [\lambda_a, \rho_z],
		\\
		\lambda_{az} = \lambda_a \lambda_z + [\rho_a,\lambda_z], \qquad
		\rho_{az} = \rho_z \rho_a + [\rho_z,\lambda_a].
	\end{alignat*}
\end{prop}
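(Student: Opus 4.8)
The plan is to reduce the four formulas to the first and third ones and then establish those two directly. The reduction uses the anti-automorphism $\sigma S$ of $U(\Lie(\m))$ introduced just above the statement: since $\sigma S$ interchanges $\lambda_a$ and $\rho_a$ for $a\in\m$, reverses products, and sends $[x,y]$ to $[\sigma S(y),\sigma S(x)]$, a straightforward induction on the degree of $z$ (using the very recursions that define $\lambda_z$ and $\rho_z$) shows $\sigma S(\lambda_z)=\rho_z$ and $\sigma S(\rho_z)=\lambda_z$ for all $z\in U(\m)$. Applying $\sigma S$ to $\lambda_{za}=\lambda_z\lambda_a+[\lambda_z,\rho_a]$ then gives $\rho_{za}=\rho_a\rho_z+[\lambda_a,\rho_z]$, and applying it to $\lambda_{az}=\lambda_a\lambda_z+[\rho_a,\lambda_z]$ gives $\rho_{az}=\rho_z\rho_a+[\rho_z,\lambda_a]$. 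So it suffices to prove the two $\lambda$-identities.

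For $\lambda_{za}=\lambda_z\lambda_a+[\lambda_z,\rho_a]$, both sides are linear in $z\in U(\m)$ and in $a\in\m$, so I would reduce to $z=((a_{i_1}a_{i_2})\cdots)a_{i_n}$ a Poincar\'e-Birkhoff-Witt basis element of $U(\m)$ and $a=a_j$ a basis vector of $\m$, and induct on $n$. The case $n=0$ is immediate, and if $j\geq i_n$ then $za_j$ is again a PBW basis element and the asserted equality is \emph{exactly} the defining recursion for $\lambda_{za_j}$. The real work is the case $j<i_n$, where $za_j$ is not in normal form: here one invokes the straightening relations of $U(\m)$, all of which come from $\m\subseteq\Nalt(U(\m))$ --- concretely $ab=ba+[a,b]$, the identity $(ua)b+(ub)a=u(ab+ba)$, and its companion $a(uv)=(au)v+(ua)v-u(av)$ for $u,v\in U(\m)$, $a,b\in\m$ --- to rewrite $za_j$ as a combination of PBW basis elements; each rewriting step either drops the degree or keeps it and pushes the multi-index earlier in the lexicographic order, so the process terminates. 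One then applies $\lambda$ to this combination, evaluates the $\lambda$'s of the strictly lower-degree correction terms by the inductive hypothesis, and uses the defining relations $[\lambda_a,\lambda_{a'}]=\lambda_{[a,a']}-2[\lambda_a,\rho_{a'}]$, $[\rho_a,\rho_{a'}]=-\rho_{[a,a']}-2[\lambda_a,\rho_{a'}]$ and $[\lambda_a,\rho_{a'}]=[\rho_a,\lambda_{a'}]$ of $\Lie(\m)$ to absorb the residual discrepancies; for instance the base step $n=1$, $j<i_n$, is precisely the identity $\lambda_{[a_k,a_j]}=[\lambda_{a_k},\lambda_{a_j}]+[\lambda_{a_k},\rho_{a_j}]-[\lambda_{a_j},\rho_{a_k}]$, which is a direct consequence of those three relations. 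The formula $\lambda_{az}=\lambda_a\lambda_z+[\rho_a,\lambda_z]$ is handled symmetrically (or, alternatively, deduced from the first one via the automorphism $\sigma$, once one checks $\sigma(\lambda_z)=\rho_{S(z)}$ with $S$ the antipode of $U(\m)$).

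I expect the main obstacle to be the non-normal-form case: one must set up the double induction --- outer on degree in the PBW filtration, inner on how far the multi-index is from being sorted --- carefully enough that every correction term produced by reassociating in $U(\m)$ is already covered, and then verify that what remains after feeding in the inductive hypothesis is exactly killed by the three relations of $\Lie(\m)$. A useful guide and sanity check (which, by itself, does not suffice) is that, because $\lambda_z$ and $\rho_z$ act on $U(\m)$ as the multiplication operators $L_z$ and $R_z$ \cite{Morandi2001}, the claimed identities project to the operator identities $L_{za}=L_zL_a+[L_z,R_a]$, $R_{za}=R_aR_z+[L_a,R_z]$, and their left-handed analogues, all of which hold on $U(\m)$ simply because $a\in\Nalt(U(\m))$; since this representation of $U(\Lie(\m))$ on $U(\m)$ need not be faithful, the equalities must still be obtained inside $U(\Lie(\m))$ by the inductive argument above.
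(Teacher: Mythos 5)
Your outline coincides with the paper's strategy -- reduce to the two $\lambda$-identities via the anti-automorphism $\sigma S$, then induct over the Poincar\'e--Birkhoff--Witt basis, the sorted case being the defining recursion -- but the part you yourself flag as ``the main obstacle'' is exactly where the paper's proof has its real content, and your plan for it has a genuine gap. When you straighten $z a_j$ with $z=z'a$ and $a_j<a$, the correction terms unavoidably involve commutators such as $[z',a_j]$, $[[z',a],a_j]$ and $z'[a,a_j]$, i.e.\ expressions containing \emph{left} multiplications by elements of $\m$ on elements of higher filtration degree. The inductive hypothesis you propose to feed in (the identity $\lambda_{wb}=\lambda_w\lambda_b+[\lambda_w,\rho_b]$ in lower degree) only controls $\lambda$ of right products $wb$ with $b\in\m$; it gives you no handle on $\lambda_{[w,b]}$, and the three defining relations of $\Lie(\m)$ alone do not ``absorb'' these terms. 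The paper resolves this by strengthening the induction: it proves \emph{simultaneously} the recursion (\ref{eq:lambda}) and the auxiliary commutator identity (\ref{eq:lambdaComm}), $\lambda_{[z,b]}=[\lambda_z,\lambda_b+2\rho_b]$, using the rewriting (\ref{eq:trick}) of $3(\rho_{[a,b]}+[\lambda_a,\rho_b])$ in terms of the elements $\lambda_c+2\rho_c$, and then pins down the residual expression by observing that it is of the form $\lambda_w$ for some $w\in U(\m)$ and computing $w$ from its action on $1$. That last device is a legitimate and essential use of the representation on $U(\m)$ which you set aside on faithfulness grounds: faithfulness is not needed, because an element already known to equal $\lambda_w$ is determined by $\lambda_w 1=w$. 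Without some substitute for (\ref{eq:lambdaComm}) and this evaluation-at-$1$ step, your double induction does not visibly close.

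A second, related weak point: the formula $\lambda_{az}=\lambda_a\lambda_z+[\rho_a,\lambda_z]$ is not ``handled symmetrically'', since the symbols $\lambda_z,\rho_z$ are defined by a purely right-handed recursion and there is no left-handed mirror of it; in the paper it is deduced from (\ref{eq:lambda}) together with (\ref{eq:lambdaComm}) via $\lambda_{az}=\lambda_{za}+\lambda_{[a,z]}$. Your alternative route through the automorphism $\sigma$ would require proving $\sigma(\lambda_z)=\rho_{S(z)}$, which itself needs an induction of the same nature (the paper proves the analogous statement $S(\lambda_z)=\lambda_{S(z)}$ only \emph{after}, and using, the present proposition), so it cannot be taken for granted here. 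In short: correct skeleton, correct base case, but the missing ingredient is the strengthened induction hypothesis $\lambda_{[z,b]}=[\lambda_z,\lambda_b+2\rho_b]$, and with it the treatment of both the unsorted case and the left-multiplication identity.
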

\begin{proof}
The second identity is obtained from the first one by applying the anti-automorphism $\sigma S$. 
The third and fourth identities share the same relation. 
Thus, we only need to show the first and the third identities. 
We prove them by induction on the filtration degree $\vert z \vert$ of $z$ (see \cite{Perez-Izquierdo2004}) that for any $b \in \m$
\begin{align}
\label{eq:lambda} \lambda_{zb} &= \lambda_z \lambda_b + [\lambda_z, \rho_b] \\
\label{eq:lambdaComm}\lambda_{[z,b]} &= [\lambda_z, \lambda_b + 2 \rho_b],	
\end{align}
the case $\vert z \vert = 1$ being trivial. 
The identity $\lambda_{az} =\lambda_a \lambda_z + [\rho_a,\lambda_z]$ follows easily from these two identities.

Note that elements of the form $[z,c] \in U(\m)$ with $c \in \m$ have filtration degree $\leq \vert z \vert$. 
We assume that these equalities hold for elements $z$ with $\vert z \vert < n$, 
and show them for $z = ((a_{i_1}a_{i_2})\cdots) a_{i_n}$. 
Let $z':= ((a_{i_1}a_{i_2})\cdots)a_{i_{n-1}}$ and $a:= a_{i_n}$. We have
\begin{align*}
	[\lambda_{{z'}a},\lambda_b + 2 \rho_b] &= [\lambda_{z'}\lambda_a + [\lambda_{z'},\rho_a], \lambda_b + 2\rho_b]\cr
	&= \lambda_{z'}[\lambda_a,\lambda_b + 2 \rho_b] + [\lambda_{z'}, \lambda_b + 2 \rho_b]\lambda_a + [[\lambda_{z'},\rho_a],\lambda_b + 2 \rho_b]\cr
	&= \lambda_{z'} \lambda_{[a,b]} + \lambda_{[{z'},b]}\lambda_a + [[\lambda_{z'},\rho_a],\lambda_b + 2 \rho_b]\cr
	&\stackrel{\langle 1 \rangle}{=} \lambda_{z'} \lambda_{[a,b]} + \lambda_{[{z'},b]}\lambda_a + [\lambda_{[{z'},b]},\rho_a] + [\lambda_{z'}, [\rho_a, \lambda_b + 2 \rho_b]] \cr
	&\stackrel{\langle 2 \rangle}{=} \lambda_{z'} \lambda_{[a,b]} + \lambda_{[{z'},b]a} + [\lambda_{z'},[\rho_a, \lambda_b + 2 \rho_b]] \cr
	&\stackrel{\langle 3 \rangle}{=} \lambda_{{z'}[a,b]} - [\lambda_{z'},\rho_{[a,b]}] + \lambda_{[{z'},b]a} + [\lambda_{z'},[\rho_a, \lambda_b + 2\rho_b]] \cr
	&= \lambda_{{z'}[a,b]+[{z'},b]a} + [\lambda_{z'}, -\rho_{[a,b]} + [\rho_a, \lambda_b + 2\rho_b]]\cr
	&\stackrel{\langle 4 \rangle}{=} \lambda_{{z'}[a,b]+[{z'},b]a} - [\lambda_{z'} , 3 (\rho_{[a,b]} + [\rho_a,\lambda_b])]
\end{align*}
where $\langle 1 \rangle - \langle 3 \rangle$ follow from the hypothesis of induction and $\langle 4 \rangle$ follows from the relations in $\Lie(\m)$. 
The element $3 (\rho_{[a,b]} + [\rho_a,\lambda_b])$ can be written in terms of elements $\lambda_c + 2 \rho_c$:
\begin{align*}
	[\lambda_a + 2\rho_a, \lambda_b + 2 \rho_b] &= \lambda_{[a,b]} - 4 \rho_{[a,b]} - 6 [\lambda_a, \rho_b] \cr
	&= \lambda_{[a,b]} + 2 \rho_{[a,b]} - 6(\rho_{[a,b]} + [\lambda_a,\rho_b])
\end{align*}
so
\begin{equation} \label{eq:trick}
	3(\rho_{[a,b]} + [\lambda_a, \rho_b]) = \frac{1}{2}(\lambda_{[a,b]} + 2\rho_{[a,b]}) - \frac{1}{2}[\lambda_a + 2\rho_a, \lambda_b + 2 \rho_b].
\end{equation}
Then, we can write $[\lambda_{{z'}a}, \lambda_b + 2\rho_b]$ as
\begin{align*}
	[\lambda_{{z'}a}, \lambda_b + 2\rho_b] 
	&= 
	\lambda_{{z'}[a,b] + [{z'},b]a} - [\lambda_{z'}, \frac{1}{2}(\lambda_{[a,b]} 
	+ 2\rho_{[a,b]}) - \frac{1}{2}[\lambda_a + 2\rho_a, \lambda_b + 2 \rho_b]].
\end{align*}
Using the Jacobi identity and the hypothesis of induction we can conclude that $[\lambda_{{z'}a},\lambda_b + 2 \rho_b] = \lambda_z$ for some $z \in U(\m)$.
The action of this element on $1$ gives $z = ({z'}a)(3b) -b({z'}a)-2({z'}a)b = [{z'}a,b]$. 
One can also check directly 
that $z = {z'}[a,b] +[{z'},b]a - \frac{1}{2} [{z'},[a,b]] + \frac{1}{2}[[{z'},a],b] - \frac{1}{2}[[{z'},b],a] = [{z'}a,b]$ in $U(\m)$. 
This proves (\ref{eq:lambdaComm}).

To prove (\ref{eq:lambda}) we can assume that $b$ is a basic element $a_{i_{n+1}}$. 
In the case that $a_{i_n} \leq a_{i_{n+1}}$, (\ref{eq:lambda}) follows from the very definition of $\lambda_{zb}$. 
Thus, we may assume that $a_{i_{n+1}} < a_{i_n}$, i.e., $b < a$. We have that
\begin{align*}
	\lambda_{zb} - \lambda_z\lambda_b - [\lambda_z,\rho_b] &= \lambda_{({z'}a)b} - \lambda_{{z'}a}\lambda_b -[\lambda_{{z'}a},\rho_b] 
	\cr
	&\stackrel{\langle 1 \rangle}{=} 
	\lambda_{({z'}a)b} - \lambda_{z'} \lambda_a \lambda_b - [\lambda_{z'},\rho_a] \lambda_b - [\lambda_{z'}\lambda_a + [\lambda_{z'},\rho_a],\rho_b] 
	\cr
	&\stackrel{\langle 2 \rangle}{=} 
	\lambda_{({z'}a)b-({z'}b)a} + \lambda_{{z'}b}\lambda_a + [\lambda_{{z'}b},\rho_a] - \lambda_{z'}\lambda_a\lambda_b - [\lambda_{z'},\rho_a]\lambda_b 
	\cr
	& \quad  
	- [\lambda_{z'}\lambda_a + [\lambda_{z'},\rho_a],\rho_b]
	\cr
	&\stackrel{\langle 3 \rangle}{=} 
	\lambda_{({z'}a)b-({z'}b)a} + \lambda_{z'}\lambda_b\lambda_a + [\lambda_{z'},\rho_b]\lambda_a + [\lambda_{z'}\lambda_b +[\lambda_{z'},\rho_b],\rho_a]
	\cr
	& \quad - \lambda_{z'}\lambda_a\lambda_b - [\lambda_{z'},\rho_a]\lambda_b - [\lambda_{z'}\lambda_a + [\lambda_{z'},\rho_a],\rho_b] 
	\cr
	&\stackrel{\langle 4 \rangle}{=} 
	\lambda_{({z'}a)b - ({z'}b)a} + \lambda_{z'} [\lambda_b,\lambda_a] + \lambda_{z'} [\lambda_b,\rho_a] 
	\cr
	& \quad 
	- \lambda_{z'} [\lambda_a,\rho_b] + [\lambda_{z'},[\rho_b,\rho_a]]
	\cr
	&\stackrel{\langle 5 \rangle}{=} 
	\lambda_{({z'}a)b - ({z'}b)a} - \lambda_{z'} \lambda_{[a,b]} - [\lambda_{z'},[\rho_a,\rho_b]] 
	\cr
	&\stackrel{\langle 6 \rangle}{=} 
	\lambda_{({z'}a)b - ({z'}b)a - {z'}[a,b]} + [\lambda_{z'}, \rho_{[a,b]} - [\rho_a,\rho_b]]
	\cr
	&\stackrel{\langle 7 \rangle}{=} 
	\lambda_{({z'}a)b - ({z'}b)a - {z'}[a,b]} + 2 [\lambda_{z'}, \rho_{[a,b]} + [\lambda_a,\rho_b]],
\end{align*}
where $\langle 1 \rangle$, $\langle 3 \rangle$, $\langle 6 \rangle$ and $\langle 7 \rangle$ follow from induction, 
$\langle 4 \rangle$ follows from the Jacobi identity, $\langle 5 \rangle$ from the defining identities of $\Lie(\m)$ and 
$\langle 2 \rangle$ is a consequence of the definition of the symbols $\lambda_z$ and $\rho_z$ since $b <a$. 
By (\ref{eq:trick}) and the Jacobi identity we can write the latter equality as $\lambda_z$ for some $z \in U(\m)$. 
The action of $\lambda_{zb} - \lambda_z \lambda_b - [\lambda_z,\rho_b]$ on $1$ gives that $z = 0$. This proves (\ref{eq:lambda}).
\end{proof}
The universal enveloping algebra $U(\m)$ also has an antipode $S$ \cites{Perez-Izquierdo2004,Perez-Izquierdo2007} 
which is an involutive anti-automorphism satisfying $z\backslash z' = S(z)z'$ and $z'/z = z'S(z)$ for any $z,z' \in U(\m)$. 
The antipodes of $U(\Lie(\m))$ and $U(\m)$ are nicely related:
\begin{prop}
	For any $z \in U(\m)$ we have that
	\begin{displaymath}
		S(\lambda_z) = \lambda_{S(z)}, \qquad S(\rho_z) = \rho_{S(z)}.
	\end{displaymath}
\end{prop}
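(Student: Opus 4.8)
The plan is to prove both identities by induction on the filtration degree of $z$; the two inductions are independent, and since $z\mapsto\lambda_z$, $z\mapsto\rho_z$ and $S$ are all linear it suffices to treat a Poincar\'e-Birkhoff-Witt basis element $z = ((a_{i_1}a_{i_2})\cdots)a_{i_n}$ of $U(\m)$. For the base case: if $n=0$ then $\lambda_1 = 1 = \rho_1$ and $S(1)=1$, so there is nothing to check; if $z = a\in\m$ then $\lambda_a,\rho_a\in\Lie(\m) = \Prim U(\Lie(\m))$, so $S(\lambda_a) = -\lambda_a$ and $S(\rho_a) = -\rho_a$, while $\m\subseteq\Prim U(\m)$ forces $S(a)=-a$; hence $S(\lambda_a) = -\lambda_a = \lambda_{S(a)}$, and likewise $S(\rho_a) = \rho_{S(a)}$.

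For the inductive step I would write $z = z'a$ with $z' := ((a_{i_1}\cdots)a_{i_{n-1}})$ and $a := a_{i_n}$, so that $z'$ has strictly smaller filtration degree. Applying the anti-automorphism $S$ of $U(\Lie(\m))$ to the recursion $\lambda_{z'a} = \lambda_{z'}\lambda_a + [\lambda_{z'},\rho_a]$ of Proposition \ref{prop:lambdaRecursive}, and using $S(\lambda_a) = -\lambda_a$, $S(\rho_a) = -\rho_a$ together with the induction hypothesis $S(\lambda_{z'}) = \lambda_{S(z')}$, a short sign computation gives
\begin{displaymath}
	S(\lambda_{z'a}) = -\left(\lambda_a\lambda_{S(z')} + [\rho_a,\lambda_{S(z')}]\right).
\end{displaymath}
By the third recursion of Proposition \ref{prop:lambdaRecursive} the parenthesised expression equals $\lambda_{aS(z')}$, and since $S$ is an anti-automorphism of $U(\m)$ we have $S(z'a) = S(a)S(z') = -aS(z')$; therefore $S(\lambda_{z'a}) = -\lambda_{aS(z')} = \lambda_{-aS(z')} = \lambda_{S(z)}$. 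The computation for $\rho$ is entirely parallel: applying $S$ to $\rho_{z'a} = \rho_a\rho_{z'} + [\lambda_a,\rho_{z'}]$ yields $S(\rho_{z'a}) = -\left(\rho_{S(z')}\rho_a + [\rho_{S(z')},\lambda_a]\right)$, which by the fourth recursion of Proposition \ref{prop:lambdaRecursive} is $-\rho_{aS(z')} = \rho_{S(z)}$. Alternatively, once the $\lambda$-identity is established, the $\rho$-identity follows from it by applying the anti-automorphism $\sigma S$, which interchanges $\lambda_x$ and $\rho_x$.

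I do not expect any genuine obstacle; the content is bookkeeping. The two points that need care are the signs produced by $S$ on brackets (using $S([u,v]) = [S(v),S(u)] = -[S(u),S(v)]$ once $S(u),S(v)$ are scaled generators) and the fact that one must \emph{expand} $\lambda_{z'a}$ via the ``right multiplication'' recursion of Proposition \ref{prop:lambdaRecursive} but then \emph{re-assemble} the result via the ``left multiplication'' recursion, which is precisely what allows the factor $S(a) = -a$ coming from $U(\m)$ to be absorbed. The recursions of Proposition \ref{prop:lambdaRecursive} are tailored to make this matching go through cleanly.
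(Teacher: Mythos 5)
Your proposal is correct and follows essentially the same route as the paper: induction on the filtration degree, applying the anti-automorphism $S$ to the recursion $\lambda_{z'a}=\lambda_{z'}\lambda_a+[\lambda_{z'},\rho_a]$ of Proposition \ref{prop:lambdaRecursive}, and re-assembling the result via the left-multiplication recursion $\lambda_{az}=\lambda_a\lambda_z+[\rho_a,\lambda_z]$ together with $S(z'a)=S(a)S(z')$ in $U(\m)$ (the paper phrases the signs as $S(\lambda_a)=\lambda_{S(a)}$, $S(a)=-a$, but the computation is the same). The $\rho$-identity is likewise handled by the paper exactly as in your parallel computation, so there is nothing to add.
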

\begin{proof}
We use induction in the filtration degree $\vert z \vert$ of $z$, the case $\vert z \vert  = 0$ being trivial. 
The general case works, using Proposition \ref{prop:lambdaRecursive}, as follows
\begin{align*}
	 S(\lambda_{za}) &= S(\lambda_z \lambda_a + [\lambda_z,\rho_a]) = S(\lambda_a)S(\lambda_z) + [S(\rho_a),S(\lambda_z)]\cr
	 &= \lambda_{S(a)}\lambda_{S(z)}  + [\rho_{S(a)},\lambda_{S(z)}] = \lambda_{S(a)S(z)} = \lambda_{S(za)},
\end{align*}
where $a \in \m$.
\end{proof}
\begin{lem}
\label{lem:lambdaDelta}
	For any $z \in U(\m)$ we have that
	\begin{displaymath}
		\Delta(\lambda_z) = \sum \lambda_{z\1} \otimes \lambda_{z\2}, \qquad \Delta(\rho_z) = \sum \rho_{z\1} \otimes \rho_{z\2}.
	\end{displaymath}
\end{lem}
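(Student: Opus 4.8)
The plan is to prove the identity for $\lambda_z$ by induction on the filtration degree $\vert z\vert$ of $z\in U(\m)$ (the PBW filtration used to define the symbols $\lambda_z,\rho_z$), exploiting the recursions of Proposition~\ref{prop:lambdaRecursive} together with the fact that $\Delta$ is multiplicative on both $U(\m)$ and $U(\Lie(\m))$; the identity for $\rho_z$ then follows by a symmetric computation, or from the $\lambda$ case via the anti-automorphism $\sigma S$.

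First I would assemble the ingredients. On $U(\Lie(\m))$, the ordinary enveloping Hopf algebra of the Lie algebra $\Lie(\m)$, the comultiplication is an algebra homomorphism, and each generator $\lambda_a,\rho_a$ with $a\in\m$ lies in $\Lie(\m)$, hence is primitive: $\Delta(\lambda_a)=\lambda_a\otimes 1+1\otimes\lambda_a$ and likewise for $\rho_a$. On $U(\m)$ the multiplication is a coalgebra morphism, so $\Delta$ is multiplicative, and each $a\in\m$ is primitive, whence $\Delta(za)=\sum z\1 a\otimes z\2+\sum z\1\otimes z\2 a$ for every $z\in U(\m)$. The map $z\mapsto\lambda_z$ is linear by construction. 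The base case $\vert z\vert\le 1$ is immediate: $\lambda_1=1$ is grouplike, and for $z=a\in\m$ one has $\sum\lambda_{a\1}\otimes\lambda_{a\2}=\lambda_a\otimes 1+1\otimes\lambda_a=\Delta(\lambda_a)$.

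For the inductive step I would write a PBW monomial of degree $n$ as $z=z'a$ with $a=a_{i_n}\in\m$ and $\vert z'\vert=n-1$, apply $\Delta$ to the relation $\lambda_{z'a}=\lambda_{z'}\lambda_a+[\lambda_{z'},\rho_a]$ of Proposition~\ref{prop:lambdaRecursive}, and use that $\Delta$ is an algebra map on $U(\Lie(\m))$ together with the primitivity of $\lambda_a,\rho_a$ and the induction hypothesis $\Delta(\lambda_{z'})=\sum\lambda_{z'\1}\otimes\lambda_{z'\2}$. Collecting terms gives
\[
\Delta(\lambda_z)=\sum\bigl(\lambda_{z'\1}\lambda_a+[\lambda_{z'\1},\rho_a]\bigr)\otimes\lambda_{z'\2}+\sum\lambda_{z'\1}\otimes\bigl(\lambda_{z'\2}\lambda_a+[\lambda_{z'\2},\rho_a]\bigr).
\]
The one point requiring care is that the Sweedler components $z'\1,z'\2$ are arbitrary elements of $U(\m)$ and need not be PBW monomials, so Proposition~\ref{prop:lambdaRecursive} must be invoked in the generality in which it is stated (for all of $U(\m)$) to rewrite each parenthesized expression as $\lambda_{z'\1 a}$, resp. $\lambda_{z'\2 a}$. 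This turns the right-hand side into $\sum\lambda_{z'\1 a}\otimes\lambda_{z'\2}+\sum\lambda_{z'\1}\otimes\lambda_{z'\2 a}$, which by linearity of $w\mapsto\lambda_w$ in each tensor factor and the formula $\Delta(z'a)=\sum z'\1 a\otimes z'\2+\sum z'\1\otimes z'\2 a$ equals $\sum\lambda_{(z'a)\1}\otimes\lambda_{(z'a)\2}=\sum\lambda_{z\1}\otimes\lambda_{z\2}$, completing the induction.

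Finally, the statement for $\rho_z$ follows identically from the relation $\rho_{z'a}=\rho_a\rho_{z'}+[\lambda_a,\rho_{z'}]$; alternatively, since $\sigma$ and the antipode $S$ of $U(\Lie(\m))$ are coalgebra morphisms (the latter because $U(\Lie(\m))$ is cocommutative), their composite $\sigma S$ is a coalgebra morphism, and as $\sigma S$ interchanges $\lambda_z$ and $\rho_z$ one obtains $\Delta(\rho_z)=(\sigma S\otimes\sigma S)\Delta(\lambda_z)=\sum\rho_{z\1}\otimes\rho_{z\2}$ at once. I do not expect a genuine obstacle here: the computation is routine once Proposition~\ref{prop:lambdaRecursive} is available, the only subtlety being precisely that one must apply it to the generally non-monomial Sweedler components of $z'$.
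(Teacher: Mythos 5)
Your proof is correct and follows the same route the paper indicates: induction on the filtration degree using Proposition \ref{prop:lambdaRecursive}, multiplicativity of $\Delta$, primitivity of $\lambda_a,\rho_a$ for $a\in\m$, and the observation that the recursion holds for arbitrary (non-monomial) Sweedler components; the paper's proof is exactly this argument, left as a one-line remark. Your treatment of the $\rho_z$ case (either by the symmetric recursion or via the coalgebra morphism $\sigma S$) is likewise consistent with the paper's conventions.
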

\begin{proof}
The proof is easily obtained using Proposition \ref{prop:lambdaRecursive} and induction on the filtration degree of $\vert z\vert$.
\end{proof}
Therefore, by Lemma \ref{lem:lambdaDelta} the homomorphism $\Mult_{U(\m)} \rightarrow U(\Lie(\m))$ determined by 
$\llambda_z \mapsto \lambda_z$ and $\rrho_z \mapsto \rho_z$ is a homomorphism of bialgebras. 
Moreover, it is a homomorphism of Hopf algebras by the recursive definition of $S(\llambda_z)$ and $S(\rrho_z)$ and the fact that 
$\sum S(\lambda_{z\1})\lambda_{z\2} = \epsilon(z)1 = \sum S(\rho_{z\1})\rho_{z\2}$.

Given $z,z' \in U(\m)$ we define the elements
\begin{equation}
\label{eq:l}
	r(z,z') := \sum S(\rho_{z\1 z'\1})\rho_{z'\2}\rho_{z\2}
\end{equation}
and
\begin{equation}
\label{eq:r}
s(z,z') := \sum S(\rho_{z\1 z'\1}) \lambda_{z\2} \rho_{z'\2}.
\end{equation}
Notice that
\begin{displaymath}
	 r(1,z) = r(z,1) = s(1,z) = \epsilon(z)1.
\end{displaymath}
\begin{prop}
	For any $z,z' \in U(\m)$ $r(z,z')$ and $s(z,z')$ belong to $U(\Lie(\m)_+)$.
\end{prop}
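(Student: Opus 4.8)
The plan is to move the question into the Hopf algebra $\Mult_{U(\m)}$. Recall that the homomorphism $\Mult_{U(\m)} \to U(\Lie(\m))$ determined by $\llambda_z \mapsto \lambda_z$ and $\rrho_z \mapsto \rho_z$ is a homomorphism of Hopf algebras; since it commutes with $\Delta$ and with the antipodes, comparison with the definitions (\ref{eq:l}) and (\ref{eq:r}) shows that it sends $\rr(z,z') \mapsto r(z,z')$ and $\sss(z,z') \mapsto s(z,z')$. As $\Mult^+_{U(\m)}$ is generated as an algebra by the elements $\rr(z,z'),\sss(z,z')$, and $U(\Lie(\m)_+)$ is a subalgebra of $U(\Lie(\m))$, the proposition is equivalent to the statement that this homomorphism maps $\Mult^+_{U(\m)}$ into $U(\Lie(\m)_+)$, and this is the form I would prove.

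The key preliminary fact is that $[\lambda_a,\rho_b] \in \Lie(\m)_+$ for all $a,b \in \m$: expanding $[\ad_a,\ad_b] = [\lambda_a - \rho_a, \lambda_b - \rho_b]$ by means of the defining relations of $\Lie(\m)$ collapses it to $[\ad_a,\ad_b] = \ad_{[a,b]} - 6[\lambda_a,\rho_b]$, whence $[\lambda_a,\rho_b] = \tfrac16(\ad_{[a,b]} - [\ad_a,\ad_b]) \in \Lie(\m)_+$. Now put $P(\phi) := \sum S(\rrho_{\phi\1 1})\phi\2$ for $\phi \in \Mult_{U(\m)}$. By the description of $\Mult^+_{U(\m)}$ obtained above one has $\Mult^+_{U(\m)} = \{P(\phi) \mid \phi \in \Mult_{U(\m)}\}$ together with the recursions $P(\llambda_b\phi') = \sum \sss(b,\phi'\1 1)\,P(\phi'\2)$ and $P(\rrho_b\phi') = \sum \rr(\phi'\1 1,b)\,P(\phi'\2)$. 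Applying the homomorphism above and inducting on the length of $\phi$ as a word in $\{\llambda_b,\rrho_b \mid b \in \m\}$ (the base case $\phi = 1$ being trivial), and using that $U(\Lie(\m)_+)$ is closed under products, the claim reduces to showing that $r(w,b) \in U(\Lie(\m)_+)$ and $s(b,w) \in U(\Lie(\m)_+)$ for every $b \in \m$ and every $w \in U(\m)$.

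This last point I would settle by a second induction, on the filtration degree of $w$. For $w = 1$ one has $s(b,1) = \lambda_b - \rho_b = \ad_b$ and $r(1,b) = \epsilon(b)1 = 0$; for $w = a \in \m$ a direct computation with Proposition \ref{prop:lambdaRecursive} gives $r(a,b) = [\lambda_a,\rho_b]$ and $s(b,a) = 2[\lambda_b,\rho_a]$, both in $\Lie(\m)_+$ by the preliminary fact. For $w = w'a$ with $a \in \m$ and $|w'| < |w|$, one expands the coproduct $\Delta(w'a) = \sum w'\1 a \otimes w'\2 + \sum w'\1 \otimes w'\2 a$ inside (\ref{eq:l}) and (\ref{eq:r}), peels off the last letter $a$ using Proposition \ref{prop:lambdaRecursive} and the relations $S(\lambda_z) = \lambda_{S(z)}$, $S(\rho_z) = \rho_{S(z)}$, and rewrites the outcome so that every isolated $\lambda_c$ or $\rho_c$ is absorbed — through the relations of $\Lie(\m)$ and the preliminary fact — into a combination of the $\ad_c$, the $[\lambda_c,\rho_d]$, and values $r(w'',b), s(b,w'')$ with $|w''| < |w|$.

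The step I expect to be hardest is precisely this final rewriting. The straightforward expansions contain many terms with bare operators $\lambda_c$ and $\rho_c$, which do not individually lie in $\Lie(\m)_+$, and one must verify that after using the defining relations of $\Lie(\m)$ — the relations encoding the Malcev identity — these only ever recombine into elements of $\Lie(\m)_+$. This is the same flavour of bookkeeping as in the proof of Proposition \ref{prop:lambdaRecursive}, and I would expect that a rewriting trick in the spirit of (\ref{eq:trick}), now re-expressing the problematic combinations through $\ad_c = \lambda_c - \rho_c$ and $[\lambda_c,\rho_d]$, is what makes the argument close.
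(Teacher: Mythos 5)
Your reduction step does not work as stated, and it is where the real content of the proposition hides. The algebra $\Mult_{U(\m)}$ is, up to the linearity relations and $\llambda_1=\rrho_1=1$, the free (tensor) algebra on the letters $\llambda_b,\rrho_b$ with $b$ running over $\ker\epsilon\subseteq U(\m)$: there is no relation expressing $\llambda_z$ for a PBW monomial $z$ of degree at least two through the letters indexed by $\m$, so words in $\{\llambda_b,\rrho_b\mid b\in\m\}$ do not exhaust $\Mult_{U(\m)}$, and the recursions $P(\llambda_b\phi')=\sum\sss(b,\phi'\1 1)P(\phi'\2)$, $P(\rrho_b\phi')=\sum\rr(\phi'\1 1,b)P(\phi'\2)$ must be run with $b$ an arbitrary element of $\ker\epsilon$. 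Equivalently, $\Mult^+_{U(\m)}$ is generated by \emph{all} $\rr(z,z'),\sss(z,z')$, so bounding its image requires exactly the statement you set out to prove, with both arguments arbitrary in $U(\m)$; nothing has been reduced to the case $b\in\m$. Moreover, even for your reduced claim the inductive step (the ``final rewriting'' for $w=w'a$) is only conjectured by analogy with Proposition \ref{prop:lambdaRecursive} and (\ref{eq:trick}), not carried out --- and that step is the entire computational difficulty. The parts you do prove are correct but easy: $[\lambda_a,\rho_b]=\tfrac16(\ad_{[a,b]}-[\ad_a,\ad_b])\in\Lie(\m)_+$, $r(a,b)=[\lambda_a,\rho_b]$, $s(b,a)=2[\lambda_b,\rho_a]$, $s(b,1)=\ad_b$.

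The paper closes the argument without any induction on words or on the filtration, by a coalgebra maximality argument that your own observations nearly contain. Every $r(z,z')$ and $s(z,z')$ satisfies $\phi 1=\epsilon(\phi)1$ (this is what your base cases are detecting), and by Lemma \ref{lem:lambdaDelta} the spans of the $r(z,z')$ and of the $s(z,z')$ are subcoalgebras of $U(\Lie(\m))$. Hence it suffices to know that $U(\Lie(\m)_+)$ is the largest subcoalgebra contained in $\{\phi\in U(\Lie(\m))\mid\phi 1=\epsilon(\phi)1\}$: a larger subcoalgebra would contain a primitive element outside $\Lie(\m)_+$, and since your preliminary fact gives $\Lie(\m)=\Lie(\m)_++\spann\langle\lambda_a\mid a\in\m\rangle$, that element can be normalized to some $\lambda_a$ with $a\neq 0$, which contradicts $\lambda_a1=a\neq 0=\epsilon(\lambda_a)1$. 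If you prefer to keep your route, you must (i) let $b$ range over all of $U(\m)$ throughout, and (ii) actually establish recursion formulas for $r(w'a,b)$ and $s(b,w'a)$ in the style of Proposition \ref{prop:lambdaRecursive}; neither is in the present write-up.
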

\begin{proof}
By Lemma \ref{lem:lambdaDelta}, 
$\spann \, \langle \, r(z,z') \mid z,z' \in U(\m) \, \rangle$ and $\spann \,  \, s(z,z') \mid z,z' \in U(\m) \, \rangle$ 
are subcoalgebras of $U(\Lie(\m))$ and they are subsets of $\{\phi \in U(\Lie(\m))\mid \phi 1 = \epsilon(\phi)1\}$, 
where $\phi 1$ denotes the action of $\phi$ on $1 \in U(\m)$. 
Thus, since the sum of subcoalgebras is a subcoalgebra, it will be enough to prove that $U(\Lie(\m)_+)$ is the largest subcoalgebra 
contained in that subspace. 
Any subcoalgebra of $U(\Lie(\m))$ larger than $U(\Lie(\m)_+)$ contains a primitive element which is not in $\Lie(\m)_+$, 
so it contains an element of the form $\lambda_a$ for some nonzero $a \in \m$. 
However, $\lambda_a 1 = a \neq 0 = \epsilon(\lambda_a)1$. This proves the result.
\end{proof}
\begin{thrm} \label{thm:main}
The homomorphism $\Mult_{U(\m)} \rightarrow U(\Lie(\m))$ determined by $\llambda_z \mapsto \lambda_z$ and $\rrho_z \mapsto \rho_z$ 
restricts to a homomorphism $\Mult^+_{U(\m)} \rightarrow U(\Lie(\m)_+)$ of Hopf algebras sending 
$\rr(z,z')$ to $r(z,z')$ and $\sss(z,z')$ to $s(z,z')$ for all $z,z' \in U(\m)$.
\end{thrm}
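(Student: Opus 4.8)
The plan is to reduce the statement to facts already in place, the only genuine computation being a short termwise application of the map. Write $\Phi\colon \Mult_{U(\m)} \to U(\Lie(\m))$ for the homomorphism determined by $\llambda_z \mapsto \lambda_z$ and $\rrho_z \mapsto \rho_z$. First I would recall that $\Phi$ is already known to be a homomorphism of Hopf algebras: it is a bialgebra map by Lemma \ref{lem:lambdaDelta} (whose proof rests on the recursions of Proposition \ref{prop:lambdaRecursive}), and it commutes with the antipodes by the recursive definition of $S(\llambda_z), S(\rrho_z)$ together with the identity $\sum S(\lambda_{z\1})\lambda_{z\2} = \epsilon(z)1 = \sum S(\rho_{z\1})\rho_{z\2}$, exactly as noted in the paragraph following Lemma \ref{lem:lambdaDelta}.

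Next I would compute the images of the distinguished generators. Applying $\Phi$ to the defining expression $\rr(z,z') = \sum S(\rrho_{z\1 z'\1})\rrho_{z'\2}\rrho_{z\2}$ and using that $\Phi$ is multiplicative, commutes with $S$, and satisfies $\Phi(\rrho_b) = \rho_b$ for every $b \in U(\m)$ (in particular for the products $z\1 z'\1$ formed in $U(\m)$), one gets
\begin{displaymath}
	\Phi(\rr(z,z')) = \sum S(\rho_{z\1 z'\1})\rho_{z'\2}\rho_{z\2} = r(z,z'),
\end{displaymath}
which is precisely (\ref{eq:l}); the identical computation with $\llambda_{z\2}$ replacing $\rrho_{z\2}$ yields $\Phi(\sss(z,z')) = s(z,z')$ as in (\ref{eq:r}). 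By the proposition preceding this theorem, $r(z,z')$ and $s(z,z')$ lie in $U(\Lie(\m)_+)$.

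Finally I would conclude. By definition $\Mult^+_{U(\m)}$ is the unital subalgebra of $\Mult_{U(\m)}$ generated by the elements $\rr(z,z')$ and $\sss(z,z')$; these are sent into $U(\Lie(\m)_+)$ by the previous step, and $U(\Lie(\m)_+)$ is a subalgebra of $U(\Lie(\m))$, so $\Phi(\Mult^+_{U(\m)}) \subseteq U(\Lie(\m)_+)$. Since $\Mult^+_{U(\m)}$ is a Hopf subalgebra of $\Mult_{U(\m)}$ and $U(\Lie(\m)_+)$ is a Hopf subalgebra of $U(\Lie(\m))$, the restriction of the Hopf algebra homomorphism $\Phi$ is automatically a homomorphism of Hopf algebras $\Mult^+_{U(\m)} \to U(\Lie(\m)_+)$ with the stated effect on generators. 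I do not expect a real obstacle here, as all the substantive work has been front-loaded into Proposition \ref{prop:lambdaRecursive}, Lemma \ref{lem:lambdaDelta}, and the preceding proposition; the one point needing care is the bookkeeping of Sweedler indices when evaluating $\Phi$ on $\rr(z,z')$ and $\sss(z,z')$, namely that the subscripts $z\1,z\2,\dots$ refer to the comultiplication of $U(\m)$ and are untouched by $\Phi$, so that $\Phi$ genuinely acts on those expressions term by term.
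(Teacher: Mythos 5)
Your proposal is correct and matches the paper's (implicit) argument exactly: the paper proves this theorem by the same assembly of Lemma \ref{lem:lambdaDelta}, the antipode identities giving that $\llambda_z\mapsto\lambda_z$, $\rrho_z\mapsto\rho_z$ is a Hopf algebra morphism, the termwise evaluation on $\rr(z,z')$ and $\sss(z,z')$, and the preceding proposition placing $r(z,z')$ and $s(z,z')$ in $U(\Lie(\m)_+)$. Nothing is missing; the generator argument and the Hopf-subalgebra observations are exactly what the paper relies on.
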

Thus, any relative module of a formal Moufang loop $F \colon \field[\m \times \m] \rightarrow \m$ is determined by its structure as $\Lie(\m)_+$-module. 
In the next section we will prove that any $\Lie(\m)_+$-module integrates to a relative module of the corresponding formal Moufang loop.
%
%
\section{Modules for Malcev algebras revisited} \label{Modules_for_Malcev_algebras_revisited}
The representation theory of Malcev algebras \cites{Kuzmin1968,Carlsson1976,Elduque1990,Elduque1995} is modeled 
by the notion of split-null extension in the variety of Malcev algebras, 
so it characterizes split-null extensions of local Moufang loops in the variety of Moufang loops. 
It fails to describe relative representations of Moufang loops in the sense of Section \ref{sec:Moufang_modules}. 
A new infinitesimal counterpart of these representations is required. 
In this section we define such representations of Malcev algebras and integrate them to representations of formal Moufang loops. 
All over this section $\m$ will denote a Malcev algebra.

\subsection{Modules for Malcev algebras}
The representation theory of Malcev algebras has been beautifully developed by Kuzmin, Carlsson, Elduque and Shestakov among others. 
The translation of Kuzmin's fundamental paper \emph{Structure and representations of finite dimensional Malcev algebras} 
by Tvalavadze, edited by Bremner and Madariaga \cite{Kuzmin2014}, contains a brief survey of recent developments. 
Here we review some results needed to put our novel approach to this topic into perspective. 
We keep the traditional way of making operators to act on the right when citing results.

A \emph{representation} of a Malcev algebra $\m$ is a linear map $\rho \colon \m \rightarrow \Endo_{\field}(V)$ such that
\begin{displaymath}
	\rho_{[[x,y],z]} = \rho_x\rho_y\rho_z - \rho_z \rho_x\rho_y + \rho_y\rho_{[z,x]}
\end{displaymath}
or, equivalently, if $V \oplus \m$ with product
\begin{displaymath}
	(v+x)(w+y) = v \rho_y - w \rho_x + [x,y]
\end{displaymath}
is a Malcev algebra. In this case the $\field$-vector space $V$ is called a \emph{module} for $\m$. 
Unfortunately, irreducible modules for Malcev algebras are very scarce.
\begin{thrm}[Carlsson, \cite{Carlsson1976}] \label{thm:Carlsson}
Over fields $\field$ of characteristic zero
\begin{itemize}
	\item[(a)] Any irreducible module for $\sl2(2,\field)$ regarded as Malcev algebra is either a module for $\sl2(2,\field)$ regarded as Lie algebra 
		or a $2$-dimensional module with basis $\{v,w\}$ such that if $\{e,f,h\}$ is a basis for $\sl2(2,\field)$ 
		with $[eh] = e, [f,h] = -f$ and $[ef] = \frac{1}{2}h$, then the action of $\sl2(2,\field)$ is given by
		\begin{displaymath}
			v\cdot h = v, \  w \cdot h = -w, \  v\cdot e = w, \  v \cdot f = 0, \  w \cdot e = 0, \  w \cdot f = -v.
		\end{displaymath}
		This module is said to be of type $M_2$.
	\item[(b)] Any irreducible module for the $7$-dimensional simple central Malcev algebra $\Oc_0$ is either trivial or isomorphic to the adjoint module.
\end{itemize}
\end{thrm}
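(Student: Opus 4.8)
The plan is to reduce Carlsson's classification to a computation inside the universal multiplicative enveloping algebra of a Malcev algebra, and then to carry that computation out for the two simple algebras.

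\emph{Reduction via complete reducibility.} The first ingredient is the Malcev analogue of Weyl's theorem: over a field of characteristic zero, every finite-dimensional module for a semisimple Malcev algebra is completely reducible. This is proved as in the Lie case, from the vanishing of the first and second cohomology groups with coefficients in a module (equivalently, by producing a Casimir operator from an invariant nondegenerate trace form); since $\sl2(2,\field)$ and $\Oc_0$ are simple, it applies and reduces the problem to listing the irreducibles. The second ingredient is to pass to an associative algebra: because the representation identity
\begin{displaymath}
	\rho_{[[x,y],z]} = \rho_x\rho_y\rho_z - \rho_z\rho_x\rho_y + \rho_y\rho_{[z,x]}
\end{displaymath}
is multilinear, the assignment $x\mapsto\rho_x$ factors through a universal associative algebra $R(\m)$, namely the quotient of the tensor algebra $T(\m)$ by the ideal generated by the multilinearisations of this identity; Malcev modules of $\m$ are exactly unital $R(\m)$-modules. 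So classifying irreducible Malcev modules is the same as describing the finite-dimensional simple quotients of $R(\m)$, that is (using complete reducibility on each finite-dimensional homomorphic image) the Wedderburn blocks of the finite-dimensional semisimple quotients of $R(\m)$.

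\emph{The case $\sl2(2,\field)$.} Since $\sl2(2,\field)$ is a Lie algebra, each of its Lie modules is in particular a Malcev module, which gives a surjection $R(\sl2(2,\field))\twoheadrightarrow U(\sl2(2,\field))$; the issue is to find the remaining simple quotients. On an irreducible Malcev module $V$ I would study the operator-valued alternating map $\omega(x,y):=\rho_{[x,y]}-(\rho_x\rho_y-\rho_y\rho_x)$, whose vanishing is equivalent to $V$ being a Lie module. Substituting the representation identity into $\omega$ and using simplicity of $\sl2(2,\field)$ with Schur's lemma, one shows $\omega$ is governed by a single scalar $\mu\in\field$ (up to the canonical identification $\sl2(2,\field)\cong\sl2(2,\field)^*$, $\omega$ is $\mu$ times a fixed invariant expression). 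If $\mu=0$ then $V$ is a Lie module. If $\mu\neq 0$, I would diagonalise $\rho_h$ — its semisimplicity follows from complete reducibility applied to the subalgebra it generates together with the $\sl2$-triple relations — and run the usual highest-weight argument, now with $\rho_e$ and $\rho_f$ subject to the $\omega$-corrected commutation relations. The corrected relations force the weight string to terminate after two steps, so $V$ is two-dimensional, and matching the structure constants identifies it with the module $M_2$ of the statement. Conversely, a direct check that the displayed action of $\{e,f,h\}$ on $\{v,w\}$ satisfies the representation identity shows $M_2$ occurs, completing part (a).

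\emph{The case $\Oc_0$.} Now $\Oc_0$ is genuinely non-associative: it is the traceless part of the split octonion algebra $\Oc$ under the commutator, and $\Oc_0\subseteq\Nalt(\Oc)=\Oc$. I would use this embedding to relate Malcev modules of $\Oc_0$ to modules over the multiplication algebra of $\Oc$: the octonion bimodule $\Oc$ becomes a Malcev $\Oc_0$-module, and $R(\Oc_0)$ maps into, and its finite-dimensional semisimple part is controlled by, the multiplication algebra of $\Oc$. For the split octonions that multiplication algebra is all of $\Endo(\Oc)\cong M_8(\field)$; together with the one-dimensional trivial representation this shows that the finite-dimensional semisimple quotient of $R(\Oc_0)$ has exactly two blocks — one for the trivial module and one for the octonion module $\Oc$. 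Since $\Oc=\field 1\oplus\Oc_0$ as an $\Oc_0$-module with $\Oc_0$ irreducible, complete reducibility forces the octonion module to split as trivial $\oplus$ adjoint, and a dimension count rules out any further block. Hence every irreducible module for $\Oc_0$ is trivial or isomorphic to the adjoint module, which is part (b). The delicate part throughout is making the reduction effective — pinning down the finite-dimensional simple quotients of $R(\m)$ and thereby excluding exotic Malcev modules: for $\sl2(2,\field)$ this is the weight-theoretic computation showing the $\omega$-corrected raising/lowering string truncates after length two, and for $\Oc_0$ it is identifying the octonion multiplication algebra with $M_8(\field)$ and checking that nothing lies strictly between the trivial and the octonion modules; the background complete-reducibility theorem is itself non-trivial but may be quoted from the structure theory of Malcev algebras.
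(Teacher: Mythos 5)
First, a remark on the comparison you were asked to beat: the paper does not prove Theorem \ref{thm:Carlsson} at all --- it is quoted from Carlsson's paper \emph{Malcev-Moduln} and used as background --- so your proposal can only be judged on its own merits. Your general skeleton (complete reducibility for semisimple Malcev algebras in characteristic zero, which is indeed available and may be quoted, plus passage to the universal associative envelope $R(\m)$ defined by the multilinearised representation identity, so that irreducible Malcev modules correspond to simple finite-dimensional quotients of $R(\m)$) is sound and is in the spirit of the classical treatments of Kuzmin, Carlsson and Elduque. The problem is that at both of the decisive points the proposal asserts rather than proves.

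In part (a), the claim that Schur's lemma forces the deviation $\omega(x,y):=\rho_{[x,y]}-[\rho_x,\rho_y]$ to be ``governed by a single scalar $\mu$'' does not stand as stated: the operators $\omega(x,y)$ are not endomorphisms of the Malcev module (they need not commute with the $\rho_z$), and since $V$ is not assumed to be a Lie module there is no $\sl2(2,\field)$-equivariance of the map $(x,y)\mapsto\omega(x,y)$ to which Schur's lemma could be applied; some replacement argument (e.g.\ exploiting the specific consequences of the representation identity, as Carlsson does) is needed. Similarly, the semisimplicity of $\rho_h$ cannot be deduced from ``complete reducibility applied to the subalgebra it generates'': that subalgebra is one-dimensional abelian, hence not semisimple, and complete reducibility is not inherited by restriction to subalgebras. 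Finally, the statement that the ``$\omega$-corrected'' highest-weight string truncates after two steps and yields exactly the module $M_2$ is precisely the computation that constitutes part (a), and it is not carried out. In part (b) the argument is essentially circular: the assertion that the finite-dimensional semisimple quotient of $R(\Oc_0)$ ``is controlled by the multiplication algebra of $\Oc$'', i.e.\ has only the trivial block and the block coming from $\Oc$, is equivalent to the theorem being proved. Identifying the multiplication algebra of the split octonions with $\Endo(\Oc)\cong M_8(\field)$ only shows which irreducible modules \emph{do} occur; nothing in the sketch excludes an exotic irreducible Malcev module for $\Oc_0$ that does not arise from octonion multiplication operators, and ruling these out is the actual content of Carlsson's theorem. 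As it stands, the proposal is a reasonable reduction plan with the two hard steps left open.
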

This result was extended by Elduque to fields of characteristic $\neq 2,3$ \cite{Elduque1990} and later to arbitrary dimension \cite{Elduque1995}. 
The two-dimensional non-Lie representation of $\sl2(2,\field)$ in Theorem \ref{thm:Carlsson} seem to be very exceptional in this approach 
to the representation theory of Malcev algebras. However, it is interesting to note that this exceptional representations is isomorphic to
\begin{displaymath}
	v\cdot x := -2 vx
\end{displaymath}
for any $v \in \field \times \field$ and $x \in \sl2(2,\field)$, where $xv$ denotes the usual matrix product. 
This description is in full accordance with Theorem \ref{thm:modules} which shows how this module is no longer exceptional 
as a relative representation of the Malcev algebra $\sl2(2,\field)$, but a member of a unified series of such representations 
appearing for any Lie algebra, not only for $\sl2(2,\field)$.

\subsection{Relative modules for Malcev algebras}
We now introduce two new Lie algebras $\Lie(\m)_+$ and $\Lie(\m)$ and show that they are isomorphic to those previously denoted by these symbols, 
so the reader will find no ambiguity in this notation. 
The approach to $\Lie(\m)_+$ through generators and relations is justified to properly define the notion of relative module for Malcev algebras.
\begin{definition}
	Let $\m$ be a Malcev algebra. A \emph{relative representation} of $\m$ is a linear map $l \colon \m \rightarrow \Endo(V)$ satisfying
	\begin{displaymath}
		[[l_a,l_b],l_c] = -[l_{[a,b]},l_c] + l_{[[a,b],c] + [[a,c],b] + [a,[b,c]]}
	\end{displaymath}
		for any $a,b,c \in \m$.
\end{definition}
\begin{prop}
	Let $\Lie(\m)_+$ be the Lie algebra generated by symbols $\{\ad_a \mid a \in \m\}$ subject to relations
	\begin{itemize}
		\item[a)] $\ad_{\alpha a + \beta b} = \alpha \ad_a + \beta \ad_b$ for all $\alpha,\beta \in \field$ and $a,b \in \m$,
		\item[b)] $[[\ad_a,\ad_b],\ad_c] + [\ad_{[a,b]},\ad_c] - \ad_{[[a,b],c] + [[a,c],b] + [a,[b,c]]}$ for all $a,b,c \in \m$.
	\end{itemize}
	Then, the category of relative representations of $\m$ is equivalent to the category of representations of the Lie algebra $\Lie(\m)_+$.
\end{prop}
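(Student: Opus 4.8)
The plan is to observe that this equivalence is essentially a restatement of the universal property of a Lie algebra presented by generators and relations, so the proof is almost formal. Recall that, viewing $\Lie(\m)_+$ as a quotient of the free Lie algebra on the vector space $\m$ (so that relation a) is built in) by the ideal generated by the elements in b), for any Lie algebra $L$ a homomorphism $\Lie(\m)_+ \rightarrow L$ is the same data as a linear map $l\colon \m \rightarrow L$, $a \mapsto l_a$, such that
\begin{displaymath}
	[[l_a,l_b],l_c] + [l_{[a,b]},l_c] - l_{[[a,b],c] + [[a,c],b] + [a,[b,c]]} = 0
\end{displaymath}
for all $a,b,c \in \m$. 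Taking $L = \Endo(V)$ with the commutator bracket, this last condition is exactly the defining identity of a relative representation of $\m$ on $V$. So on objects the two categories already coincide: a $\Lie(\m)_+$-module structure on $V$ is the same as a relative representation of $\m$ on $V$.

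First I would make the functor $\Phi$ from $\Lie(\m)_+$-representations to relative representations explicit: send $(V,\varphi)$ to $(V,l^\varphi)$ with $l^\varphi_a := \varphi(\ad_a)$, and send a morphism $f$ to $f$ itself. This is well defined because the relations a), b) hold in $\Lie(\m)_+$ and $\varphi$ carries them to $\Endo(V)$, giving linearity of $l^\varphi$ and, after rearranging b), the relative-representation identity; and because an intertwiner of the operators $\varphi(x)$, $x \in \Lie(\m)_+$, is in particular an intertwiner of the $\varphi(\ad_a)$. Then I would make the quasi-inverse $\Psi$ explicit: send $(V,l)$ to $(V,\varphi_l)$, where $\varphi_l$ is the unique Lie homomorphism with $\varphi_l(\ad_a) = l_a$ furnished by the universal property above, and again send a morphism $f$ to $f$. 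The only point needing a word is that a linear $f$ with $f l_a = l'_a f$ for all $a$ satisfies $f\varphi_l(x) = \varphi_{l'}(x) f$ for all $x \in \Lie(\m)_+$; this holds because the $\ad_a$ generate $\Lie(\m)_+$ and the set of $x$ on which $\varphi_l$ and $\varphi_{l'}$ are $f$-intertwined is a Lie subalgebra.

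Finally I would check $\Phi\Psi = \Id$ and $\Psi\Phi = \Id$ on the nose: both composites act as the identity on underlying vector spaces and on linear maps, a $\Lie(\m)_+$-action is determined by its restriction to the generators $\ad_a$, and a relative representation is by definition a map on $\m$, so no data is lost. I do not expect a genuine obstacle here; the only things worth spelling out carefully are functoriality on morphisms (handled by the remark that the $\ad_a$ generate $\Lie(\m)_+$) and the fact that the presentation of $\Lie(\m)_+$ genuinely yields the stated universal property, i.e. that relation b) may be imposed freely — this is just the standard description of a Lie algebra as a quotient of a free Lie algebra, and in particular nothing in the argument uses the (separately established) identification of this presented $\Lie(\m)_+$ with the Lie subalgebra of $\Lie(\m)$ bearing the same name.
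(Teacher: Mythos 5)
Your proof is correct: the paper states this proposition without proof, treating it as an immediate consequence of the universal property of a Lie algebra presented by generators and relations, and your argument is precisely the routine verification it has in mind (identify $\Lie(\m)_+$-module structures with Lie homomorphisms $\Lie(\m)_+ \rightarrow \Endo(V)$, hence with linear maps $l\colon \m \rightarrow \Endo(V)$ satisfying the rearranged relation b), with both functors acting as the identity on underlying spaces and morphisms). Your closing remark is also accurate: nothing here depends on the separately proved identification of the presented $\Lie(\m)_+$ with the subalgebra of $\Lie(\m)$ generated by the $\lambda_a-\rho_a$.
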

Note that the symbol $\ad_a$ can be specialized to the usual adjoint map $\ad_a \colon b \mapsto [a,b]$ of $\m$, 
obtaining a relative representation of $\m$ (the \emph{adjoint representation}), 
so there is no confusion when using $\ad_a$ as an abstract generator of $\Lie(\m)_+$.

For each pair of elements $a, b \in \m$, we define
\begin{displaymath}
	D_{a,b} := \frac{1}{2}(\ad_{[a,b]} + [\ad_a, \ad_b]) \in \Lie(\m)_+.
\end{displaymath}
Consider $T_\m$ a copy of $\m$ whose elements are denoted by $T_a$ with $a \in \m$ and 
note that $T_{\alpha a + \beta b} = \alpha T_a + \beta T_b$ for any $\alpha, \beta \in \field$ and $a,b \in \m$.
\begin{prop} \label{prop:L}
	The vector space $\Lie(\m) := \Lie(\m)_+ \oplus T_\m$ is a Lie algebra with the product determined by
	\begin{align*}
		[\ad_a,T_x] &:= T_{[a,x]},\cr
		[[\ad_a,\ad_b],T_x] &:= T_{[a,[b,x]] - [b,[a,x]]} \cr
		[T_a,T_b] &:= \frac{1}{3} (\ad_{[a,b]}  + 2D_{a,b}) = \frac{1}{3}(2\ad_{[a,b]} + [\ad_a,\ad_b]).
	\end{align*}	
\end{prop}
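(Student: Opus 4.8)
The plan is to verify that the proposed bracket is anticommutative and satisfies the Jacobi identity. Anticommutativity is immediate on $\Lie(\m)_+$, is built into the mixed brackets (there $[\ad_a,T_x]$ and $[T_x,\ad_a]$ are declared opposite), and on $T_\m$ reduces to $2\ad_{[a,b]}+[\ad_a,\ad_b]=-(2\ad_{[b,a]}+[\ad_b,\ad_a])$; in particular $[T_a,T_a]=0$. Since the bracket is trilinear and the decomposition $\Lie(\m)=\Lie(\m)_+\oplus T_\m$ is compatible with it, in the sense that $[\Lie(\m)_+,\Lie(\m)_+]\subseteq\Lie(\m)_+$, $[\Lie(\m)_+,T_\m]\subseteq T_\m$ and $[T_\m,T_\m]\subseteq\Lie(\m)_+$, the Jacobi identity only needs to be verified according to how many of the three arguments lie in $T_\m$.

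The starting point is to make $T_\m$ a $\Lie(\m)_+$-module via $\ad_a\cdot T_x:=T_{[a,x]}$. This is well defined: relation (a) in the presentation of $\Lie(\m)_+$ is obviously respected, and relation (b) is respected precisely because the adjoint map $\ad_a\colon b\mapsto[a,b]$ is a relative representation of $\m$ — this was recorded right after $\Lie(\m)_+$ was presented by generators and relations, and it is a multilinear consequence of the Malcev identity; see also \cite{Perez-Izquierdo2004,Morandi2001}. Granting the module structure, the first displayed bracket is the module action, the second displayed bracket $[[\ad_a,\ad_b],T_x]=T_{[a,[b,x]]-[b,[a,x]]}$ is automatic (it equals $\ad_a\cdot(\ad_b\cdot T_x)-\ad_b\cdot(\ad_a\cdot T_x)$), the Jacobi identity with no $T$-argument is that of $\Lie(\m)_+$, and the Jacobi identity with exactly one $T$-argument is the module axiom.

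For three $T$-arguments, the module structure together with the formula for $[T_p,T_q]$ gives $[[T_x,T_y],T_z]=\tfrac13 T_{2[[x,y],z]+[x,[y,z]]-[y,[x,z]]}$, and summing over the three cyclic permutations of $x,y,z$ the subscript collapses to $0$ by anticommutativity in $\m$ alone. For one argument $u\in\Lie(\m)_+$ and two $T$-arguments, the set of $u$ for which the Jacobi identity holds is a Lie subalgebra of $\Lie(\m)_+$: it consists exactly of those $u$ for which $\ad_u$ acts as a derivation with respect to the bilinear map $T_\m\times T_\m\to\Lie(\m)_+$, $(T_x,T_y)\mapsto[T_x,T_y]$, and those $u$ are closed under the bracket of $\Lie(\m)_+$ by the usual argument. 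Hence it suffices to take $u=\ad_a$. Substituting the formula for $[T_p,T_q]$, using relation (b) to rewrite $[[\ad_x,\ad_y],\ad_a]$, and using $[\ad_p,\ad_q]=2D_{p,q}-\ad_{[p,q]}$, the expression $[[\ad_a,T_x],T_y]+[[T_x,T_y],\ad_a]+[[T_y,\ad_a],T_x]$ collapses — all $\ad$-terms cancelling by anticommutativity — to $\tfrac23\bigl(D_{[a,x],y}+D_{[x,y],a}+D_{[y,a],x}\bigr)$. This cyclic sum vanishes: expanding $D_{p,q}=\tfrac12(\ad_{[p,q]}+[\ad_p,\ad_q])$, applying relation (b) once more and the Jacobi identity already available in $\Lie(\m)_+$, it equals $\tfrac12\ad_c$ where $c$ is the sum of the Jacobian of $x,y,a$ in $\m$ and the three cyclic permutations of $[[x,y],a]+[[x,a],y]+[x,[y,a]]$, and $c=0$ in $\m$ by anticommutativity.

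The Malcev identity of $\m$ enters only once, to secure the $\Lie(\m)_+$-module structure on $T_\m$; everything else follows formally from relation (b), the Jacobi identity of $\Lie(\m)_+$ and anticommutativity of $\m$. Accordingly the step I expect to be the main obstacle is the bookkeeping in the one-$\Lie(\m)_+$-argument case, and it is tedious rather than subtle. A shorter route, available once the two presentations of $\Lie(\m)$ have been reconciled, is to identify $\Lie(\m)$ with the Lie algebra of the same name from Section \ref{sec:Moufang_modules}, generated by $\lambda_a,\rho_a$, under $\ad_a\leftrightarrow\lambda_a-\rho_a$ and $T_a\leftrightarrow\lambda_a+\rho_a$: the three displayed bracket formulas then follow directly from the relations $[\lambda_a,\lambda_b]=\lambda_{[a,b]}-2[\lambda_a,\rho_b]$, $[\rho_a,\rho_b]=-\rho_{[a,b]}-2[\lambda_a,\rho_b]$, $[\lambda_a,\rho_b]=[\rho_a,\lambda_b]$, and the Jacobi identity is automatic, at the cost of first checking that $\lambda_a,\rho_a$ span $\Lie(\m)_+\oplus T_\m$.
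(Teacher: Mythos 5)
Your proof is correct, and its two substantive computations coincide with the paper's, but the overall organization is genuinely different. The paper verifies the Jacobi identity on the spanning set $\ad_{\m}\cup D_{\m,\m}\cup T_\m$: the triples inside $\ad_{\m}\cup T_\m$ are checked by exactly the computations you perform (your reduction of the $(\ad_a,T_x,T_y)$ case to $\tfrac23\bigl(D_{[a,x],y}+D_{[x,y],a}+D_{[y,a],x}\bigr)$ is the same quantity the paper shows to vanish), while the triples involving $D_{a,b}$ are handled via the relations (\ref{eq:DT-Dad})--(\ref{eq:DD}), which rest on Sagle's theorem that $D_{a,b}$ is a derivation of $\m$. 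You instead first establish the $\Lie(\m)_+$-module structure on $T_\m$ (the same single use of the Malcev identity, since the adjoint map being a relative representation is equivalent to $D_{a,b}$ being a derivation), so the cases with at most one $T$-argument become automatic, and you eliminate arbitrary elements of $\Lie(\m)_+$ in the two-$T$ case by the derivation-subalgebra reduction to the generators $\ad_a$ rather than by spanning with $D_{\m,\m}$. Your route buys two things the paper leaves implicit: the well-definedness of $[\phi,T_x]$ for a general $\phi\in\Lie(\m)_+$ (the paper relies silently on its remark about the adjoint representation), and an actual derivation, from relation (b) plus the Jacobi identity and anticommutativity, of the vanishing of $D_{[a,x],y}+D_{[x,y],a}+D_{[y,a],x}$, which the paper only asserts; the paper's route, in exchange, records the $D$-relations it reuses afterwards. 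One caveat on your closing aside: the reconciliation of the two presentations is Proposition \ref{prop:isomorphism}, whose proof in the paper uses the present proposition, so that shortcut is circular in the paper's logical order; making it independent would require showing the natural map out of $\Lie(\m)_+\oplus T_\m$ is injective, not merely that $\lambda_a,\rho_a$ span its image.
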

\begin{proof}
We should check that the cyclic sum of the product of three elements in $ ad_{\m} \cup D_{\m,\m} \cup T_\m $ is zero. 
Since $\Lie(\m)_+$ is a Lie algebra, we may assume that at least one element belongs to $T_\m$. 
We first deal with the case where the three elements belong to $\ad_{\m} \cup \, T_\m$.  
Let $J := [[T_a,T_b],T_c] + [[T_b,T_c],T_a] +[[T_c,T_a],T_b]$. We have
\begin{align*}
		J &= \frac{1}{3} \Big( [2\ad_{[a,b]}+[\ad_a,\ad_b],T_c] + [2\ad_{[b,c]}+[\ad_b,\ad_c],T_a] 
		\cr
		& \quad + [2\ad_{[c,a]}+[\ad_c,\ad_a],T_b] \Big) 
		\cr
		&= \frac{1}{3} \Big( 2 T_{[[a,b],c]} + T_{[a,[b,c]] - [b,[a,c]]} +2 T_{[[b,c],a]} + T_{[b,[c,a]] - [c,[b,a]]} 
		\cr
		& \quad +2 T_{[[c,a],b]} + T_{[c,[a,b]] -[a,[c,b]]}  \Big) = 0.
\end{align*}
Now let $J := [[T_a,T_b],\ad_x] + [[T_b,\ad_x],T_a]+ [[\ad_x,T_a],T_b]$. We have
\begin{align*}
		J &= \frac{1}{3}[ 2\ad_{[a,b]}+[\ad_a,\ad_b], \ad_x ] + [T_{[b,x]},T_a] + [T_{[x,a]},T_b] \cr
		&= \frac{1}{3} \Big( 2[\ad_{[a,b]},\ad_x] - [\ad_{[a,b]},\ad_x] + \ad_{[[a,b],x]+[[a,x],b]+[a,[b,x]]}
		\cr
		&  \quad +2 \ad_{[[b,x],a]} +[\ad_{[b,x]},\ad_a] +2 \ad_{[[x,a],b]} + [\ad_{[x,a]},\ad_b] \Big)
		\cr
		&= \quad \frac{1}{3} \Big(  [\ad_{[a,b]},\ad_x] + [\ad_{[b,x]},\ad_a] + [\ad_{[x,a]},\ad_b] 
		\cr
		& \quad + \ad_{[[b,x],a]+[[x,a],b]+[[a,b],x]} \Big) = 0,
\end{align*}
where the last equality follows from the fact that by construction $\Lie(\m)_+$ satisfies the Jacobi identity and 
from the relations between the generators of $\Lie(\m)_+$. 
Finally, consider $J:=[[T_x,\ad_a],\ad_b] + [[\ad_a,\ad_b],T_x] + [[\ad_b,T_x],\ad_a]$. Again,
\begin{align*}
		J &= [T_{[x,a]},\ad_b] - T_{[[x,a],b]-[[x,b],a]} + [T_{[b,x]},\ad_a] 
		\cr
		&= T_{[[x,a],b]} + T_{[a,[b,x]] - [b,[a,x]]} + T_{[[b,x],a]} = 0.
\end{align*}

To deal with the case where elements in $D_{\m,\m}$ appear in the cyclic sum we define the map $D_{a,b} \colon \m \rightarrow \m$ given by
\begin{equation}
\label{eq:D}
	 D_{a,b}(x) := \frac{1}{2} \Big( [[a,b],x] + [a,[b,x]]-[b,[a,x]] \Big).
\end{equation}
This map is a derivation of $\m$ \cite{Sagle1961} and it is related to $D_{a,b} \in \Lie(\m)_+$ by
\begin{align}
	& [D_{a,b},T_x] = T_{D_{a,b}(x)} , \qquad [D_{a,b},\ad_x] = \ad_{D_{a,b}(x)},	\label{eq:DT-Dad} \\
	& [D_{a,b},D_{x,y}]=D_{D_{a,b}(x),y} + D_{x,D_{a,b}(y)}. \label{eq:DD}
\end{align}
The Jacobi identity for the remaining cases follows from these relations.
\end{proof}
\begin{prop} \label{prop:isomorphism}
	The Lie algebra $\Lie(\m)$ is isomorphic to the Lie algebra $\Lie(\m)$ defined in Section \ref{subsect:relative_modules}.
\end{prop}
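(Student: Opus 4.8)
The plan is to exhibit a pair of mutually inverse homomorphisms of Lie algebras. Write $\widehat{\Lie}(\m)$ for the Lie algebra of Section~\ref{subsect:relative_modules}, presented by generators $\lambda_a,\rho_a$ ($a\in\m$) and the relations listed there, and keep the notation $\Lie(\m)=\Lie(\m)_+\oplus T_\m$ for the Lie algebra built in Proposition~\ref{prop:L}. The correspondence to aim for, on generators, is
\begin{displaymath}
	\lambda_a \longleftrightarrow \tfrac12\bigl(\ad_a + T_a\bigr), \qquad \rho_a \longleftrightarrow \tfrac12\bigl(T_a - \ad_a\bigr),
\end{displaymath}
equivalently $\ad_a=\lambda_a-\rho_a$ and $T_a=\lambda_a+\rho_a$; these scalars are the only ones that make $\ad_a$ correspond to the genuine adjoint operator of $\m$.

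First I would define $\Phi\colon\widehat{\Lie}(\m)\to\Lie(\m)$ on the presentation of $\widehat{\Lie}(\m)$ by $\Phi(\lambda_a):=\tfrac12(\ad_a+T_a)$ and $\Phi(\rho_a):=\tfrac12(T_a-\ad_a)$, and check that each of the defining relations of $\widehat{\Lie}(\m)$ is satisfied by these images. This is a direct computation using only the bracket formulas of Proposition~\ref{prop:L} together with $[T_a,\ad_b]=-[\ad_b,T_a]=-T_{[b,a]}=T_{[a,b]}$; the cancellations hinge on the coefficient $\tfrac13$ in $[T_a,T_b]=\tfrac13(2\ad_{[a,b]}+[\ad_a,\ad_b])$ and on the definition $D_{a,b}=\tfrac12(\ad_{[a,b]}+[\ad_a,\ad_b])$. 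Once $\Phi$ is known to be a well-defined homomorphism, it is surjective, since $\Phi(\lambda_a-\rho_a)=\ad_a$ and $\Phi(\lambda_a+\rho_a)=T_a$ and the $\ad_a$ and $T_a$ generate $\Lie(\m)$.

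Next I would construct the inverse $\Psi\colon\Lie(\m)\to\widehat{\Lie}(\m)$. On $\Lie(\m)_+$, which is given by generators $\ad_a$ and relations (a), (b) of the proposition that presents $\Lie(\m)_+$ by generators and relations, I would set $\Psi(\ad_a):=\lambda_a-\rho_a$ and verify (a) and (b) for $\lambda_a-\rho_a$ inside $\widehat{\Lie}(\m)$. Relation (a) is trivial; relation (b), after substituting $[\ad_a,\ad_b]=2D_{a,b}-\ad_{[a,b]}$ and using (\ref{eq:D}), is equivalent to the single identity $[\bar D_{a,b},\,\lambda_c-\rho_c]=\lambda_{D_{a,b}(c)}-\rho_{D_{a,b}(c)}$, where $\bar D_{a,b}:=\tfrac12\bigl(\lambda_{[a,b]}-\rho_{[a,b]}+[\lambda_a-\rho_a,\lambda_b-\rho_b]\bigr)$; this is the abstract form of the classical fact that the operators $D_{a,b}$ are derivations of $\m$~\cite{Sagle1961}, read off from the defining relations of $\widehat{\Lie}(\m)$ (compare (\ref{eq:DT-Dad})). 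I would then extend $\Psi$ to $T_\m$ by $\Psi(T_a):=\lambda_a+\rho_a$ and verify the three remaining bracket formulas of Proposition~\ref{prop:L}: the formula $[\ad_a,T_x]=T_{[a,x]}$ reduces, via the relation $[\lambda_a,\rho_x]=[\rho_a,\lambda_x]$ and the other defining relations of $\widehat{\Lie}(\m)$, to $[\lambda_a-\rho_a,\lambda_x+\rho_x]=\lambda_{[a,x]}+\rho_{[a,x]}$; the formula for $[[\ad_a,\ad_b],T_x]$ then follows from it by the Jacobi identity; and $[T_a,T_b]=\tfrac13(2\ad_{[a,b]}+[\ad_a,\ad_b])$ is the computation dual to the first relation checked for $\Phi$.

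Finally, $\Psi\Phi$ fixes $\lambda_a$ and $\rho_a$ and $\Phi\Psi$ fixes $\ad_a$ and $T_a$, so $\Phi$ and $\Psi$ are mutually inverse isomorphisms. The step I expect to be the main obstacle is relation (b) in the construction of $\Psi$: this is the only place where the Malcev identity is genuinely used, and carrying it out requires either a patient bracket computation in $\widehat{\Lie}(\m)$ --- conveniently organised around the antisymmetric element $P_{a,b}:=[\lambda_a,\rho_b]$ and the identity $[\lambda_a-\rho_a,\lambda_b-\rho_b]=\lambda_{[a,b]}-\rho_{[a,b]}-6P_{a,b}$ --- or appealing to the structure theory of $\widehat{\Lie}(\m)$ developed in \cite{Perez-Izquierdo2004,Morandi2001}. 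The remaining verifications for $\Phi$ and $\Psi$ are routine, but the $\tfrac12$ and $\tfrac13$ coefficients must be tracked with care for the cancellations to occur.
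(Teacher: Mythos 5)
Your proposal is correct and takes essentially the same route as the paper's own proof: both directions are obtained by checking the defining relations on generators under the correspondence $\lambda_a \leftrightarrow \tfrac12(\ad_a+T_a)$, $\rho_a \leftrightarrow \tfrac12(T_a-\ad_a)$, which is exactly the paper's construction of the mutually inverse homomorphisms $\phi$ and $\phi'$ (including the verification of relation (b) for $\lambda_a-\rho_a$, which the paper also leaves as a computation). Incidentally, your sign for $\rho_a$ is the consistent one, since it makes $\lambda_a-\rho_a$ correspond to $\ad_a$ and $\lambda_a+\rho_a$ to $T_a$, whereas the formula $\rho_a:=\tfrac12\ad_a-\tfrac12 T_a$ displayed in the paper's proof carries a sign slip.
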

\begin{proof}
To avoid ambiguities in this proof, we denote by $\Lie(\m)'$ the Lie algebra defined in \ref{subsect:relative_modules} 
and use $\Lie(\m)$ for the Lie algebra in Proposition \ref{prop:L}.

By Proposition \ref{prop:L}, the elements $\lambda_a  := \tfrac{1}{2}\ad_a + \tfrac{1}{2} T_a, \rho_a := \tfrac{1}{2}\ad_a - \tfrac{1}{2} T_a \in \Lie(\m)$ 
satisfy the defining relations of $\Lie(\m)'$ so we have a homomorphism $\phi \colon \Lie(\m)' \rightarrow \Lie(\m)$ 
sending $\lambda_a$ to $\lambda_a$ and $\rho_a$ to $\rho_a$.

Conversely, the elements $\ad_a \in \Lie(\m)'$ satisfy the defining relations of $\Lie(\m)_+$, 
so we have a homomorphism $\Lie(\m)^+ \rightarrow \Lie(\m)'$ sending $\ad_a$ to $\ad_a$. 
By Proposition \ref{prop:L} and the formulas for the product on $\Lie(\m)'$, this homomorphism can be extended 
to a homomorphism $\phi' \colon \Lie(\m) \rightarrow \Lie(\m)'$ sending $\ad_a$ to $\ad_a$ and $T_a$ to $T_a$. 
Clearly $\phi'$ is the inverse of $\phi$.
\end{proof}
%
%
\subsection{Relative modules for \fd central simple Malcev algebras}
%
Proposition \ref{prop:isomorphism} shows that any relative module of a formal Moufang loop $F \colon \field[\m \times \m] \rightarrow \m$ 
is determined by its structure as a relative module for the Malcev algebra $\m$. 
Before proving that relative representations of Malcev algebras can be formally integrated to relative representations of the corresponding formal loops, 
we classify the relative modules of finite-dimensional (f.d.) central simple Malcev algebras to show how the new theory 
extends the usual theory of representations of Malcev algebras.

Any \fd central simple Malcev algebra is either a simple Lie algebra or an algebra of traceless octonions with the commutator product. 
So to develop our theory we first consider Lie algebras.

\begin{prop}
	Let $\m$ be a Lie algebra. Then
	\begin{align*}
	 	I_M &:= \spann \langle \, \ad_{[a,b]} - D_{a,b} \mid a, b \in \m \, \rangle \\
	 	I_L &:= \spann \langle \, \ad_{[a,b]} + 2 D_{a,b} \mid a, b \in \m \, \rangle
	\end{align*}
	are ideals of $\Lie(\m)_+$ and $[I_M,I_L] = 0$.
\end{prop}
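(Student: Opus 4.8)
The plan is to work entirely inside the generators--and--relations presentation of $\Lie(\m)_+$, exploiting the fact that for a Lie algebra the ``Malcev Jacobian'' $[[a,b],c]+[[a,c],b]+[a,[b,c]]$ collapses: by the Jacobi identity in $\m$ it equals $2[[a,b],c]$, and the inner derivation \eqref{eq:D} collapses to $D_{a,b}(x)=[[a,b],x]$. First I would record the consequences of this in $\Lie(\m)_+$. Relation b) becomes $[\,[\ad_a,\ad_b]+\ad_{[a,b]},\ad_c\,]=2\ad_{[[a,b],c]}$, that is
\[
  [D_{a,b},\ad_c]=\ad_{[[a,b],c]},
\]
and, combined with the $\ad$-part of \eqref{eq:DT-Dad} and with \eqref{eq:DD}, it also gives $[D_{a,b},D_{c,d}]=D_{[[a,b],c],d}+D_{c,[[a,b],d]}$. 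I would also note the elementary identities $D_{b,a}=-D_{a,b}$ and $[\ad_x,\ad_y]=2D_{x,y}-\ad_{[x,y]}$ in $\Lie(\m)_+$, both immediate from $D_{a,b}=\tfrac12(\ad_{[a,b]}+[\ad_a,\ad_b])$. These are the only facts I will use.

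For the two ideal claims, since $\Lie(\m)_+$ is generated as a Lie algebra by $\{\ad_c\mid c\in\m\}$, it is enough to check that $I_M$ and $I_L$ are stable under $[\ad_c,-]$; stability under the whole algebra then follows by the standard induction $[[g,g'],i]=[g,[g',i]]-[g',[g,i]]$. Writing $u_{a,b}:=\ad_{[a,b]}-D_{a,b}$ and $v_{a,b}:=\ad_{[a,b]}+2D_{a,b}$, a direct computation with the identities above yields
\[
  [\ad_c,u_{a,b}]=2\,u_{[a,b],c}\in I_M,
  \qquad
  [\ad_c,v_{a,b}]=-\,v_{[a,b],c}\in I_L,
\]
so both subspaces are ideals of $\Lie(\m)_+$. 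This step is routine.

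The heart of the matter is $[I_M,I_L]=0$, i.e.\ $[u_{a,b},v_{c,d}]=0$ for all $a,b,c,d\in\m$. Expanding the bracket and using the identities from the first paragraph (in particular $[D_{a,b},D_{c,d}]=D_{[[a,b],c],d}+D_{c,[[a,b],d]}$) I expect to arrive at
\[
  [u_{a,b},v_{c,d}]=2\bigl(D_{[a,b],[c,d]}-[D_{a,b},D_{c,d}]\bigr),
\]
and then, after substituting $D=\tfrac12(\ad_{[\,\cdot\,,\,\cdot\,]}+[\ad_{\cdot},\ad_{\cdot}])$ once more and using $[D_{a,b},\ad_{[c,d]}]=\ad_{[[a,b],[c,d]]}$, at $\tfrac12\Psi$ with
\[
  \Psi:=[\ad_{[a,b]},\ad_{[c,d]}]-[\ad_{[[a,b],c]},\ad_d]-[\ad_c,\ad_{[[a,b],d]}].
\]
The key lemma I would isolate is: in $\Lie(\m)_+$ the cyclic sum $[\ad_{[x,y]},\ad_z]+[\ad_{[y,z]},\ad_x]+[\ad_{[z,x]},\ad_y]$ vanishes for all $x,y,z\in\m$. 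Indeed, applying the simplified relation b) to each summand rewrites this sum as $2\ad_{[[x,y],z]+[[y,z],x]+[[z,x],y]}$ minus the Jacobi expression $[[\ad_x,\ad_y],\ad_z]+[[\ad_y,\ad_z],\ad_x]+[[\ad_z,\ad_x],\ad_y]$; the first term vanishes by the Jacobi identity in $\m$ and the second by the Jacobi identity in $\Lie(\m)_+$. Recognizing $\Psi$ as minus this cyclic sum with $(x,y,z)=(c,d,[a,b])$ then gives $\Psi=0$, hence $[u_{a,b},v_{c,d}]=0$.

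The obstacle I anticipate is not computational but a matter of discipline: one must avoid the seductive (and false) shortcut of declaring $D_{[a,b],[c,d]}$ and $[D_{a,b},D_{c,d}]$ equal because they induce the same derivation of $\m$ --- the map $\Lie(\m)_+\to\Der(\m)$ is far from injective, $I_M$ and $I_L$ themselves lying in its kernel. Every step must therefore be justified inside the presented Lie algebra, the only non-formal inputs being the two Jacobi identities (in $\m$ and in $\Lie(\m)_+$) together with relation b); the real care goes into sign bookkeeping and into keeping track of which $D_{a,b}$ is an element of $\Lie(\m)_+$ and which is a derivation of $\m$.
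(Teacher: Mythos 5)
Your proposal is correct and takes essentially the same route as the paper: the identical bracket computations with the generators $\ad_c$ give the ideal property, and $[I_M,I_L]=0$ is reduced to the very same identity $[\ad_{[a,b]},\ad_{[c,d]}]=[\ad_{[[a,b],c]},\ad_d]+[\ad_c,\ad_{[[a,b],d]}]$, which the paper verifies by the same combination of relation b) (collapsed for Lie algebras) and the Jacobi identities in $\m$ and in $\Lie(\m)_+$ that powers your cyclic-sum lemma. The only discrepancy is the immaterial factor of two in ``$\tfrac12\Psi$'' (one in fact gets $[u_{a,b},v_{c,d}]=2\bigl(D_{[a,b],[c,d]}-[D_{a,b},D_{c,d}]\bigr)=\Psi$), which does not affect anything since the point is that $\Psi=0$.
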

\begin{proof}
We use the following relation in $\Lie(\m)_+$
\begin{displaymath}
	[[\ad_a,\ad_b],\ad_c] = 2\ad_{[[a,b],c]} - [\ad_{[a,b]},\ad_c],
\end{displaymath}
which can also be written as
\begin{displaymath}
	[D_{a,b},\ad_c] = \ad_{[[a,b],c]}.
\end{displaymath}
Let $\xi_{a,b}:=\ad_{[a,b]} - D_{a,b}$ and $\xi'_{c,d} := \ad_{[c,d]} + 2 D_{c,d} = 2 \ad_{[c,d]} + [\ad_c,\ad_d]$. We have
\begin{align*}
	[\xi_{a,b}, \ad_c] &= [\ad_{[a,b]} - D_{a,b},\ad_c] = 2D_{[a,b],c}-\ad_{[[a,b],c]} - \ad_{[[a,b],c]} = -2 \xi_{[a,b],c} 
	\\
	[\xi'_{a,b}, \ad_c] &= [\ad_{[a,b]} + 2 D_{a,b},\ad_c] = 2D_{[a,b],c} - \ad_{[[a,b],c]} + 2\ad_{[[a,b],c]} = \xi'_{[a,b],c} .
\end{align*}
Since $\Lie(\m)_+$ is generated by $\ad_\m$, this proves that $I_M$ and $I_L$ are ideals. Finally
\begin{align*}
	 [\xi_{a,b},\xi'_{c,d}] &= [\xi_{a,b},2\ad_{[c,d]} + [\ad_c,\ad_d] ]
	 -4\xi_{[a,b],[c,d]} - 2[\xi_{[a,b],c},\ad_d] - 2[\ad_c, \xi_{[a,b],d}]\cr
	 &= - 4\xi_{[a,b],[c,d]} + 4 \xi_{[[a,b],c],d} +4 \xi_{c,[[a,b],d]}.
\end{align*}
The vanishing of this element is equivalent to the identity
\begin{displaymath}
	[\ad_{[a,b]},\ad_{[c,d]}] = [\ad_{[[a,b],c]},\ad_d] + [\ad_c,\ad_{[[a,b],d]}], 
\end{displaymath}
which can be proved as follows
\begin{align*}
	[\ad_{[a,b]},\ad_{[c,d]}] &= 2 \ad_{[[a,b],[c,d]]} - [\ad_{[a,b]},[\ad_c,\ad_d]] 
	\cr
	&= 2 \ad_{[[a,b],[c,d]]} - [[\ad_{[a,b]},\ad_c],\ad_d] - [\ad_c,[\ad_{[a,b]},\ad_d]] 
	\cr
	&= 2 \ad_{[[a,b],[c,d]]} -2\ad_{[[[a,b],c],d]} -2 \ad_{[c,[[a,b],d]]} 
	\cr
	& \quad + [\ad_{[[a,b],c]},\ad_d] + [\ad_c,\ad_{[[a,b],d]}]
	\cr
	&= [\ad_{[[a,b],c]},\ad_d] + [\ad_c,\ad_{[[a,b],d]}].
\end{align*}
\end{proof}
\begin{prop} \label{prop:2m}
	Let $\m$ be a \fd semisimple Lie algebra. Then assigning
	\begin{displaymath}
		\ad_a \mapsto (-2a,a)
	\end{displaymath}
	induces an isomorphism of Lie algebras $\Lie(\m)_+ \cong \m \times \m$.
\end{prop}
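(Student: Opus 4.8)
The plan is to produce a surjective Lie algebra homomorphism $\phi\colon\Lie(\m)_+\to\m\times\m$ determined by $\ad_a\mapsto(-2a,a)$ and then to show that its kernel is zero. First I would check that this prescription respects the defining relations of $\Lie(\m)_+$, so that $\phi$ exists: linearity is immediate, and since $\m$ is a Lie algebra the Jacobi identity turns $[[a,b],c]+[[a,c],b]+[a,[b,c]]$ into $2[[a,b],c]$, so relation (b) reads $[[\ad_a,\ad_b],\ad_c]=2\ad_{[[a,b],c]}-[\ad_{[a,b]},\ad_c]$; a direct computation in $\m\times\m$ (using $[(x_1,x_2),(y_1,y_2)]=([x_1,y_1],[x_2,y_2])$) verifies that $(-2a,a)$ satisfies this. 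Equivalently $\phi=(p_1,p_2)$, where $p_1,p_2\colon\Lie(\m)_+\to\Der(\m)\cong\m$ are the homomorphisms attached to the relative representations $a\mapsto-2\ad_a$ and $a\mapsto\ad_a$ of $\m$ on the adjoint module (both satisfy relation (b), again by the Jacobi identity, and all derivations of a semisimple Lie algebra are inner). Surjectivity then uses that $\m$ is perfect: from $[(-2a,a),(-2b,b)]=(4[a,b],[a,b])$ and $[\m,\m]=\m$ the image contains every $(4c,c)$, hence also $(4c,c)-(-2c,c)=(6c,0)$, so $\m\times 0$ and symmetrically $0\times\m$ lie in the image.

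Next I would show that $\Lie(\m)_+$ is finite-dimensional and perfect. Put $D_{a,b}=\tfrac12(\ad_{[a,b]}+[\ad_a,\ad_b])$, so that $[\ad_a,\ad_b]=2D_{a,b}-\ad_{[a,b]}$; using the identity $[D_{a,b},\ad_c]=\ad_{[[a,b],c]}$ from the proof of the previous proposition one checks that $\ad_\m+D_{\m,\m}$ (where $\ad_\m:=\spann\langle\ad_a\mid a\in\m\rangle$) is closed under the bracket — the only case needing a word is $[D_{a,b},[\ad_c,\ad_d]]=[\ad_{[[a,b],c]},\ad_d]+[\ad_c,\ad_{[[a,b],d]}]$, which lies in $\ad_\m+D_{\m,\m}$ since $[\ad_x,\ad_y]=2D_{x,y}-\ad_{[x,y]}$. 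As $\ad_\m+D_{\m,\m}$ contains all the generators it equals $\Lie(\m)_+$, which is consequently finite-dimensional. It is also perfect: by perfectness of $\m$ every $\ad_c$ is a sum of elements $\ad_{[[a,b],c']}=[D_{a,b},\ad_{c'}]\in[\Lie(\m)_+,\Lie(\m)_+]$, and then $D_{a,b}\in[\Lie(\m)_+,\Lie(\m)_+]$ as well.

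Finally I would prove $\ker\phi=0$. A short computation gives $\phi(D_{a,b})=([a,b],[a,b])$. If $z\in\ker\phi$, writing $z=\ad_a+d$ with $d=\sum_i\lambda_iD_{a_i,b_i}$ and comparing the two coordinates of $\phi(z)=0$ forces $a=0$ and $y:=\sum_i\lambda_i[a_i,b_i]=0$; hence $[z,\ad_c]=\sum_i\lambda_i\ad_{[[a_i,b_i],c]}=\ad_{[y,c]}=0$ for all $c$, and since $\Lie(\m)_+$ is generated by the $\ad_c$ this shows $z$ is central. Thus $\ker\phi$ is a central, hence solvable, ideal, so $\ker\phi\subseteq\operatorname{rad}\Lie(\m)_+$; conversely $\phi(\operatorname{rad}\Lie(\m)_+)$ is a solvable ideal of the semisimple algebra $\m\times\m$, hence $0$, so $\operatorname{rad}\Lie(\m)_+=\ker\phi$ is central. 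A Levi decomposition $\Lie(\m)_+=\s\oplus\operatorname{rad}\Lie(\m)_+$ (a direct product of Lie algebras, since the action of $\s$ on the central radical is trivial) combined with perfectness gives $\Lie(\m)_+=\s$, whence $\ker\phi=0$ and $\phi$ is an isomorphism.

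I expect the main obstacle to be the middle part together with the centrality claim: recognizing that $\Lie(\m)_+=\ad_\m+D_{\m,\m}$ — so that $\Lie(\m)_+$ is finite-dimensional — and that $\ker\phi$ is forced to lie inside $D_{\m,\m}$ and to be central. Once that is established, the semisimplicity of $\m\times\m$ (Whitehead's lemma, or Levi plus triviality of the action on a central ideal) and the perfectness of $\Lie(\m)_+$ close the argument with no further computation.
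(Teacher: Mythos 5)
Your proof is correct and follows essentially the same route as the paper's: the same homomorphism $\ad_a\mapsto(-2a,a)$, surjectivity via perfectness of $\m$, identification of the kernel with the central radical of $\Lie(\m)_+$, and a Levi-decomposition argument to show that radical is zero. The only difference is that you make explicit what the paper leaves implicit (that $\Lie(\m)_+=\ad_\m+\spann\langle D_{a,b}\rangle$ is finite-dimensional and perfect), whereas the paper instead observes directly that kernel elements $\sum_i D_{a_i,b_i}$ with $\sum_i[a_i,b_i]=0$ equal $\tfrac12\sum_i[\ad_{a_i},\ad_{b_i}]$ and hence lie in the derived algebra, which meets the center trivially.
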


\begin{proof}
Let $l_a := (-2a,a)$. We have that
\begin{displaymath}
	[[l_a,l_b],l_c] = -[l_{[a,b]},l_c] + l_{[[a,b],c]+[[a,c],b] +[a,[b,c]]}
\end{displaymath}
so there exists a homomorphism of Lie algebras $\Lie(\m)_+ \rightarrow \m \times \m$ induced by $\ad_a \mapsto l_a$. 
We check that it is an isomorphism. 
The image of $D_{a,b}$ is $([a,b],[a,b])$ so the image of $\ad_{[a,b]}-D_{a,b}$ is $(-3[a,b],0)$ and 
the image of $\ad_{[a,b]} + 2 D_{a,b}$ is $(0,3[a,b])$. 
Since $\m$ is semisimple, we obtain the surjectivity. 
If $\ad_a + \sum_i D_{a_i,b_i}$ belongs to the kernel of the homomorphism, then $(-2a+\sum_i[a_i,b_i],a+\sum_i[a_i,b_i]) = (0,0)$, 
so $a = 0 = \sum_i[a_i,b_i]$. 
Therefore, the kernel consists of all the elements $\sum_i D_{a_i,b_i}$ such that $\sum_i[a_i,b_i] = 0$. 
Since $[D_{a,b},\ad_c] = \ad_{[[a,b],c]}$, the kernel is the center of $\Lie(\m)_+$ and it coincides with the radical of $\Lie(\m)_+$. 
The Levi decomposition of $\Lie(\m)_+$ shows that in this case the intersection of the center and the derived algebra $[\Lie(\m)_+,\Lie(\m)_+]$ is zero. 
However, $\sum_i[a_i,b_i] = 0$ implies that $\sum_i[\ad_{a_i},\ad_{b_i}] = 2 \sum_i D_{a_i,b_i}$ 
and so the center is contained in $[\Lie(\m)_+,\Lie(\m)_+]$. 
This proves that the kernel is zero.
\end{proof}
Now we can describe the relative modules for semisimple Lie algebras. 
The term \emph{Lie module} refers to a usual module for a Lie algebra as opposed to the more general kind of modules that we are considering in this paper.

\begin{thrm} \label{thm:modules}
	Let $\m$ be a \fd semisimple Lie algebra and $V$ an irreducible relative module of $\m$. 
	Then there exist irreducible Lie modules $V_M$ and $V_L$ of $\m$ such that $V$ is isomorphic to the vector space $V_M \otimes V_L$ with action
	\begin{displaymath}
		a * v_M \otimes v_L := -2(av_M) \otimes v_L + v_M \otimes (av_L)
	\end{displaymath}
	for any $a \in \m$, $v_M \in V_M$ and $v_L \in V_L$.
\end{thrm}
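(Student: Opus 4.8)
The plan is to identify an irreducible relative module of $\m$ with an irreducible module over the direct sum $\m\times\m$ and then appeal to the classical tensor-product decomposition of irreducible modules over a direct sum of semisimple Lie algebras.

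First I would use that, by the definition of a relative representation together with the proposition identifying relative representations of $\m$ with representations of $\Lie(\m)_+$ (under which the abstract generator $\ad_a$ acts as the operator $a*$), an irreducible relative module $V$ of $\m$ is the same thing as an irreducible $\Lie(\m)_+$-module. Since $\m$ is finite-dimensional semisimple, Proposition~\ref{prop:2m} gives an isomorphism $\Lie(\m)_+\cong\m\times\m$ with $\ad_a\mapsto(-2a,a)$. Writing $\m\times\m=\m_1\oplus\m_2$ for the two copies of $\m$, we conclude that $V$ is an irreducible module for $\m_1\oplus\m_2$ on which $\ad_a$ acts through $(-2a,a)$, with the $-2a$ sitting in $\m_1$.

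Next I would carry out the standard decomposition. Restricting $V$ to the summand $\m_1$ and invoking Weyl's complete reducibility theorem (this is where semisimplicity of $\m$ and characteristic zero enter), $V$ splits as a sum of $\m_1$-isotypic components; because $\m_2$ commutes with $\m_1$ it preserves each component, so the irreducibility of $V$ forces a single isotypic component and yields $V\cong V_M\otimes V_L$, where $V_M$ is the occurring irreducible $\m_1$-module and $V_L=\Hom_{\m_1}(V_M,V)$ is a module for $\m_2$; a symmetric argument (or a Schur-type argument) then shows $V_L$ is $\m_2$-irreducible. Under the identifications $\m_1\cong\m\cong\m_2$ this exhibits irreducible Lie modules $V_M$ and $V_L$ of $\m$, with $(x,y)\in\m_1\oplus\m_2$ acting on $V_M\otimes V_L$ as $x\otimes 1+1\otimes y$. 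Finally, translating the action of $\ad_a=(-2a,a)$ through this isomorphism gives $a*(v_M\otimes v_L)=-2(av_M)\otimes v_L+v_M\otimes(av_L)$, which is exactly the asserted formula; the extreme cases $V_L=\field$ and $V_M=\field$ recover, respectively, the operator $v\mapsto-2av$ observed before the theorem and an ordinary Lie module, so the statement genuinely unifies these.

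The step I expect to be the main obstacle is the tensor-product decomposition: making the appeal to complete reducibility fully rigorous, in particular controlling the endomorphism (Schur) algebra $\Endo_\m(V_M)$ when $\field$ is not algebraically closed, and---if relative modules are not assumed finite-dimensional---ensuring that the multiplicity-space argument still produces honestly irreducible factors $V_M$ and $V_L$ rather than merely indecomposable ones. Everything else is formal once the presentation of $\Lie(\m)_+$ by generators and relations and Proposition~\ref{prop:2m} are in hand.
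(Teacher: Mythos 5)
Your proposal is correct and follows essentially the same route as the paper: relative modules are identified with $\Lie(\m)_+$-modules, Proposition~\ref{prop:2m} converts these to $\m\times\m$-modules with $a$ acting as $(-2a,a)$, and the classical tensor-product decomposition of irreducibles over a direct sum gives $V\cong V_M\otimes V_L$. The paper's own proof is just a terser version of this argument (it cites the decomposition without spelling out the isotypic-component details or the Schur-algebra caveat you raise).
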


\begin{proof}
Relative modules are the same as $\Lie(\m)_+$-modules. Since we know that $\ad_a \mapsto (-2a,a)$ induces 
an isomorphism between $\Lie(\m)_+$ and $\m \times \m$ then irreducible relative modules of $\m$ correspond to 
irreducible $\Lie(\m)_+$-modules where the element $a \in \m$ acts as $(-2a,a)$, i.e. 
to tensor products $V_M \otimes V_L$ of two irreducible Lie modules of $\m$ where $a$ acts as in the statement.
\end{proof}
Notice that in case that $V_M \cong \field$ is a trivial Lie module, $V_M \otimes V_L \cong V_L$ is a Lie module. 
However, when $V_L \cong \field$ is a trivial Lie module, $\m \cong \sl2(2,\field)$ and $V_M$ is its two-dimensional irreducible representation, 
we get the irreducible non-Lie Malcev module of $\sl2(2,\field)$. 
Therefore, in general, a relative module of a \fd semisimple Lie algebra is some sort of combination of a Lie module $V_L$ 
and a `purely Malcev' module $V_M$.

Relative representations of \fd non-Lie central simple Malcev algebras are easily derived. 
These algebras are known to be isomorphic to the traceless octonions with the commutator product. 
Thus after extending scalars we get the split octonion algebra. 
Since $\bar{\field}\otimes_{\field} \Lie(\m)_+ \cong \Lie(\bar{\field} \otimes_{\field} \m)_+$ for any field $\bar{\field}$ extending $\field$, 
the following result proves that the representation theory of these Malcev algebras corresponds to the representation theory 
of central simple Lie algebras of type $B_3$.
\begin{thrm}
	Let $\m$ be a \fd non-Lie central simple Malcev algebra. 
	Then $\Lie(\m)_+$ is a central simple Lie algebra of type $B_3$ and relative modules for $\m$ 
	are the same that $\Lie(\m)_+$-modules, the action being given by $a* v := \ad_a v$ for any $a \in \m$.
\end{thrm}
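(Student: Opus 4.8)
The plan is to identify $\Lie(\m)_+$ with an explicit orthogonal Lie algebra; the module statement then follows at once from the equivalence, already established, between relative representations of $\m$ and representations of $\Lie(\m)_+$ (the generator $\ad_a$ acting as $v\mapsto\ad_av$). Recall that $\m$ is, over $\field$, the space $\Oc_0$ of traceless elements of an octonion algebra $\Oc$ with bracket $[x,y]=xy-yx$; since $\Lie(-)_+$ commutes with base change we may if we wish pass to $\bar\field$ and take $\Oc$ split, though the argument works over $\field$. Let $q$ be the norm of $\Oc$, $q_0:=q|_{\Oc_0}$ the induced nondegenerate $7$-dimensional form, and $b$ its polar form. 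Because $[x,y]$ is again traceless, $\ad_a\colon x\mapsto[a,x]$ is a well-defined operator on $\Oc_0$; and since $\bar a=-a$ for $a\in\Oc_0$, the composition-algebra identities $b(xu,v)=b(u,\bar xv)$ and $b(ux,v)=b(u,v\bar x)$ give $b(\ad_au,v)=-b(u,\ad_av)$, so $\ad_a\in\mathfrak{so}(q_0)$.

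First I would describe $\Lie(\m)_+$ concretely. The defining relation of $\Lie(\m)_+$ rewrites every triple bracket $[[\ad_a,\ad_b],\ad_c]$ inside $\spann\{\ad_d\}+\spann\{[\ad_a,\ad_b]\}$, so $\Lie(\m)_+$ is spanned by the $\ad_a$ together with the elements $D_{a,b}:=\tfrac12(\ad_{[a,b]}+[\ad_a,\ad_b])$, and one has $[D_{a,b},\ad_c]=\ad_{D_{a,b}(c)}$ with $D_{a,b}\colon\m\to\m$ the derivation (\ref{eq:D}). Under the adjoint realization $\ad_a\mapsto\ad_a$ these go to the honest operators $\ad_a$ and $D_{a,b}$ on $\Oc_0$, generating the subalgebra $\mathfrak{L}=\mathfrak{h}\oplus\spann\{\ad_a\mid a\in\Oc_0\}\subseteq\mathfrak{gl}(\Oc_0)$, where $\mathfrak{h}:=\spann\{D_{a,b}\}$ is classically (\cite{Sagle1961}) a $14$-dimensional $\field$-form of $\mathfrak{g}_2$ acting on $\Oc_0$ as its $7$-dimensional irreducible module. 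As an $\mathfrak{h}$-module, $\spann\{\ad_a\}$ is a copy of that irreducible $\Oc_0$, which is not isomorphic to the adjoint module $\mathfrak{h}$, so the two summands meet only in $0$; hence $\dim\mathfrak{L}=21$ and $\mathfrak{L}\subseteq\mathfrak{so}(q_0)$ by the first paragraph.

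Then I would finish with a dimension count. The surjection $\Lie(\m)_+\twoheadrightarrow\mathfrak{L}$ gives $\dim\Lie(\m)_+\ge21$; conversely, faithfulness of the adjoint realization — citable from \cite{Perez-Izquierdo2004,Morandi2001}, or obtainable by checking that the abstract $D_{a,b}$ pick up no relations beyond those of $\mathfrak{h}$ — gives $\dim\Lie(\m)_+\le\dim\mathfrak{L}=21$, so $\Lie(\m)_+\cong\mathfrak{L}$. Since $\dim\mathfrak{so}(q_0)=\binom{7}{2}=21=\dim\mathfrak{L}$ and $\mathfrak{L}\subseteq\mathfrak{so}(q_0)$, we conclude $\Lie(\m)_+=\mathfrak{so}(q_0)$, which is central simple of type $B_3$ as the orthogonal Lie algebra of a nondegenerate $7$-dimensional quadratic form over a field of characteristic zero. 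Finally, relative modules of $\m$ are $\Lie(\m)_+$-modules with $a\ast v=\ad_av$ by the equivalence recalled at the start, now read through $\Lie(\m)_+\cong\mathfrak{so}(q_0)$.

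I expect the dimension count — equivalently, the faithfulness of the adjoint relative representation of $\Oc_0$, i.e. that no collapse occurs among the abstract $D_{a,b}$ beyond the $\mathfrak{g}_2$-relations — to be the only real obstacle. Everything else (the skew-symmetry of $\ad_a$ for $q_0$, the structure of the derivation algebra of $\Oc_0$, and the comparison of the $7$- and $14$-dimensional $\mathfrak{g}_2$-modules inside $\mathfrak{gl}(\Oc_0)$) is standard, and once $\dim\Lie(\m)_+=21$ is secured the containment in $\mathfrak{so}(q_0)$ forces equality and the $B_3$ type is immediate.
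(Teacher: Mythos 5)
Your overall reduction agrees with the paper's: everything hinges on proving that $\Lie(\m)_+$ is central simple of type $B_3$, and your route --- surject $\Lie(\m)_+$ onto the concrete algebra $\mathfrak{L}=\ad_{\m}+\spann\langle D_{a,b}\rangle\subseteq\mathfrak{so}(q_0)$ generated by the adjoint operators and identify $\mathfrak{L}=\mathfrak{so}(q_0)$ --- is in the spirit of the ``dimension counting argument'' that the paper itself mentions as an alternative. But the step you defer is exactly the theorem's real content, and it is not closed. Surjectivity of $\Lie(\m)_+\twoheadrightarrow\mathfrak{L}$ only gives $\dim\Lie(\m)_+\ge 21$; for the upper bound you must show that every linear relation $\sum_i D_{a_i,b_i}=0$ holding among the \emph{concrete} inner derivations of $\Oc_0$ (a $7$-dimensional space of relations, since $\Lambda^2\Oc_0$ has dimension $21$ and $\Der(\Oc_0)$ has dimension $14$) is already a consequence of the defining relations of $\Lie(\m)_+$ for the abstract elements $D_{a,b}=\tfrac12(\ad_{[a,b]}+[\ad_a,\ad_b])$. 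Your phrase ``the abstract $D_{a,b}$ pick up no relations beyond those of $\mathfrak{h}$'' has this backwards: no \emph{extra} relations is automatic from the surjection, and what must be proved is that no \emph{fewer} hold. Moreover, neither \cite{Perez-Izquierdo2004} nor \cite{Morandi2001} states the faithfulness (equivalently $\dim\Lie(\m)_+=21$) of this abstract presentation for $\Oc_0$ --- that is essentially what is being proved here, so it cannot simply be ``cited''. Without it, $\Lie(\m)_+$ could a priori be up to $28$-dimensional with a nontrivial kernel over $\mathfrak{so}(q_0)$, in which case it would be neither simple nor would relative modules reduce to $B_3$-modules.

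The paper closes precisely this gap by analysing the kernel of the adjoint realization: a kernel element $\ad_a+\sum_iD_{a_i,b_i}$ forces the operator $\ad_a$ to be a derivation of $\m$, which for a non-Lie central simple Malcev algebra forces $a=0$; the remaining abstract element $\sum_iD_{a_i,b_i}$, whose associated derivation (\ref{eq:D}) vanishes, commutes with all generators because $[D_{a,b},\ad_c]=\ad_{D_{a,b}(c)}$ follows from the defining relations, and this central element is then killed as in the proof of Proposition \ref{prop:2m}. Some argument of this kind (or an explicit verification that the seven-dimensional space of $\mathfrak{g}_2$-relations holds abstractly in $\Lie(\m)_+$) is what your proposal still needs. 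The surrounding material you supply is correct and standard: the skew-symmetry of $\ad_a$ with respect to $q_0$, the fact that $\Lie(\m)_+$ is spanned by the $\ad_a$ and $D_{a,b}$, the identifications $\spann\langle D_{a,b}\rangle=\Der(\Oc_0)\cong\mathfrak{g}_2$ and $\mathfrak{so}(q_0)=\Der(\Oc_0)\oplus\ad_{\Oc_0}$ (which rederives the fact the paper takes from \cite{Schafer1966}), and the reduction of the module statement to the structure of $\Lie(\m)_+$.
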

\begin{proof}
We only need to prove that $\Lie(\m)_+$ is a central simple Lie algebra of type $B_3$. 
We can assume that $\field$ is algebraically closed. 
The map $\ad \colon \m \rightarrow \gl(\m)$ $a \mapsto \ad_a$ defines a relative representation of $\m$ and 
induces an epimorphism between $\Lie(\m)_+$ and the multiplication Lie algebra $\Lie$ of $\m$, i.e. 
the Lie subalgebra of $\gl(\m)$ generated by the maps $\ad_a \colon x \mapsto [a,x]$, 
which is known to be a central simple Lie algebra of type $B_3$ \cite{Schafer1966}. 
The kernel of this epimorphism consists of the symbols $\ad_a + \sum_i D_{a_i,b_i} \in \Lie(\m)_+$ acting trivially on $\m$ 
($\ad_a$ acts as the adjoint map and $D_{a,b}$ acts as the derivation defined in (\ref{eq:D})). 
However, no non-zero derivation of $\m$ is of the form $\ad_a$, so $a = 0$. 
So we can conclude that $\sum_i D_{a_i,b_i} = 0$ as in the proof of Proposition \ref{prop:2m} 
(although a dimension counting argument, valid in positive characteristic $\neq 2,3$, is also possible), so the statement is proved.
\end{proof}
\subsection{Formal integration of relative modules for Malcev algebras}
Any relative module $V$ of a Malcev algebra $\m$ is a module for $U(\Lie(m)_+)$ and, by Theorem \ref{thm:main}, also for $\Mult^+_{U(\m)}$. 
Thus, by Theorem \ref{thm:fundamental}, it defines an abelian group $\left(\field[V] \otimes U(\m) \stackrel{\pi}{\rightarrow}U(\m), \bp, \bm, 0 \right)$ 
in $\cB \downarrow U(\m)$. By Proposition \ref{prop:Nalt} we only have to check that $\m \subseteq \Nalt(\field[V] \otimes U(\m))$.
\begin{thrm}
	Let $\m$ be a Malcev algebra and $V$ a relative module of $M$. Then $\field[V] \otimes U(\m)$ with the product given by the formula
	\begin{displaymath}
		(x \otimes b)(x' \otimes b') := \sum \left( r(b\1,b'\1)x \right) \left(s(b\2,b'\2)x' \right)\otimes b\3 b'\3
	\end{displaymath}
	for any $x, x' \in \field[V]$ and $b,b' \in U(\m)$, where $r(b,b')$ and $s(b,b')$ are defined by (\ref{eq:r}) and (\ref{eq:l}) respectively, 
	is a unital irreducible bialgebra whose generalized alternative nucleus contains $\m \cong 1\otimes \m$. 	
\end{thrm}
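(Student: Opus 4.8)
The statement splits into two claims: that $A:=\field[V]\otimes U(\m)$ equipped with the displayed product is a unital irreducible bialgebra, and that $1\otimes\m\subseteq\Nalt(A)$. The first claim requires essentially no new work. Being a relative module, $V$ is a module for the Lie algebra $\Lie(\m)_+$ and hence a unital left $U(\Lie(\m)_+)$-module; by Theorem~\ref{thm:main} the Hopf algebra homomorphism $\Mult^+_{U(\m)}\rightarrow U(\Lie(\m)_+)$ carries $\rr(z,z')$ to $r(z,z')$ and $\sss(z,z')$ to $s(z,z')$, so pulling back turns $V$ into a unital left $\Mult^+_{U(\m)}$-module on which $\rr(z,z')$ and $\sss(z,z')$ act exactly as the operators $r(z,z')$ and $s(z,z')$. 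Feeding this module into the construction carried out in the proof of Theorem~\ref{thm:fundamental} (with $B=U(\m)$) produces an abelian group $(\field[V]\otimes U(\m)\stackrel{\pi}{\rightarrow}U(\m),\bp,\bm,0)$ in $\cB\downarrow U(\m)$ whose underlying bialgebra has precisely the product in the statement and in which $0\colon b\mapsto 1\otimes b$ is a bialgebra section of $\pi$; in particular $A$ is a unital irreducible bialgebra, and the identification $\m\cong 1\otimes\m$ of the statement is via $0$.

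For the second claim I would begin by recording, as a short computation from the product formula (using $r(z,1)=r(1,z)=s(1,z)=\epsilon(z)1$), that for $a\in\m$ left and right multiplication by $1\otimes a$ on $A=\field[V]\otimes U(\m)$ are given by
\begin{align*}
	(1\otimes a)(x\otimes b)&=\sum (s(a,b\1)x)\otimes b\2+x\otimes ab,\\
	(x\otimes b)(1\otimes a)&=\sum (r(b\1,a)x)\otimes b\2+x\otimes ba,
\end{align*}
where the operators $s(a,z)$ and $r(z,a)$ occurring here are, by construction, elements of $U(\Lie(\m)_+)$ acting on $\field[V]$ through the relative module structure of $V$. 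Unwinding the coproduct of the primitive element $a$ then shows that the two relative-module identities (\ref{eq:relative1})--(\ref{eq:relative2}) for $b=a$ are verbatim the identities $(1\otimes a,u,v)=-(u,1\otimes a,v)$ and $(u,v,1\otimes a)=-(u,1\otimes a,v)$ (for all $u,v\in A$) that define membership of $1\otimes a$ in $\Nalt(A)$. Hence it suffices to verify them for $b=a\in\m$, which by Proposition~\ref{prop:Nalt} simultaneously exhibits $A$ as a relative module for $U(\m)$.

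The verification itself is a direct, if lengthy, computation. One substitutes the two formulas above for multiplication by $1\otimes a$ into each of the three associators $(1\otimes a,\,x\otimes b,\,x'\otimes b')$, $(x\otimes b,\,1\otimes a,\,x'\otimes b')$ and $(x\otimes b,\,x'\otimes b',\,1\otimes a)$, and then expands every remaining product by the product formula, by the multiplicativity of $r$ and $s$ over products in $\field[V]$ (Proposition~\ref{prop:module-algebra}), and by the recursions of Proposition~\ref{prop:lambdaRecursive}, thereby reducing each of the three identities to an equality between sums of operators applied to $x$ and $x'$. The terms that do not cancel outright are exactly those governed by the defining relations of $\Lie(\m)_+$ --- equivalently, by the relative representation identity satisfied by the action of $\m$ on $V$ --- together with $\sum S(\lambda_{z\1})\lambda_{z\2}=\epsilon(z)1=\sum S(\rho_{z\1})\rho_{z\2}$, so the identities hold. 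This runs parallel to the proof of \cite[Theorem~14]{Perez-Izquierdo2007} and to the (omitted) proof of Proposition~\ref{prop:Nalt}.

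I expect the only genuine obstacle to be the Sweedler-index bookkeeping in this last step: each of the three associators expands into several summands indexed by the ways of splitting the iterated coproducts of $a$, $b$ and $b'$, and the effort lies in regrouping these until the residual operator identity is recognizably the $\Lie(\m)_+$-relation. Exploiting the anti-automorphism $\sigma S$ interchanging $\lambda$ and $\rho$, exactly as in Proposition~\ref{prop:lambdaRecursive}, should roughly halve this labour. By contrast, the bialgebra claim is a formal consequence of results already in hand.
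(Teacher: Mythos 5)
Your overall route is the paper's: the bialgebra/abelian-group part is obtained exactly as in the preamble to this subsection (a relative module is a $U(\Lie(\m)_+)$-module, hence via Theorem~\ref{thm:main} a $\Mult^+_{U(\m)}$-module, which Theorem~\ref{thm:fundamental} turns into an abelian group on $\field[V]\otimes U(\m)$ with precisely the displayed product); your formulas for multiplication by $1\otimes a$ are correct; and reducing the problem to $1\otimes\m\subseteq\Nalt(A)$, equivalently to identities (\ref{eq:relative1})--(\ref{eq:relative2}) for primitive $b=a$, is the same reduction the paper makes through Proposition~\ref{prop:Nalt}. So the architecture is sound and matches the paper.

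The gap is in the decisive computation, which you only assert, and whose closing mechanism you misidentify. When one expands the associators $(a,x'b',x''b'')+(x'b',a,x''b'')$ (and the companion identity), two families of terms survive the obvious cancellations: those whose $U(\m)$-factor is one of $(ab')b''$, $a(b'b'')$, $(b'a)b''$, $b'(ab'')$, and, after rewriting $r$ and $s$ through $\lambda,\rho$, those carrying a left factor $S(\rho_z)$ with $z$ a product of all of $a,b',b''$. Both families cancel only because $(a,b',b'')=-(b',a,b'')$ holds in $U(\m)$, i.e.\ because $\m$ lies in the generalized alternative nucleus of $U(\m)$ (a fact from the construction of $U(\m)$ in \cite{Perez-Izquierdo2004}, invoked twice in the paper's proof) --- an ingredient your sketch never uses. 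What then remains are operator identities valid in $U(\Lie(\m))$ itself, uniformly in $V$, which follow from Proposition~\ref{prop:lambdaRecursive} together with the counit/antipode relation, e.g.\ $\rho_{ab''}\rho_{b'}=\rho_{b''}\rho_a\rho_{b'}+\rho_{b''}\lambda_a\rho_{b'}-\lambda_a\rho_{b''}\rho_{b'}$; in particular the defining relation of $\Lie(\m)_+$, i.e.\ the relative-representation identity of $V$, plays no role at this stage. So your forecast that the residual terms are ``governed by the defining relations of $\Lie(\m)_+$'' points at the wrong cancellation: following it literally you would get stuck on the four $U(\m)$-factor terms above. To complete the argument you must add the nucleus property of $\m$ in $U(\m)$ and then finish with Proposition~\ref{prop:lambdaRecursive}, as the paper does when it reduces to and verifies the identities $S_1=0$ and $S_2=0$.
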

\begin{proof}
Note that the identification $\field[V] \otimes 1 \cong \field[V]$ and $1 \otimes U(\m) \cong U(\m)$ implies
\begin{displaymath}
	xb' = (x \otimes 1)(1 \otimes b') = \sum\left( r(1,b'\1)x \right) \left(s(1,b'\2) 1\right) \otimes b'\3 = x \otimes b',
\end{displaymath}
so we can safely remove the symbol $\otimes$ from our notation. 
Note also that $\field[V]$ and $U(\m)$ are subalgebras of $\field[V] \otimes U(\m)$, so no extra symbols for their products are needed. 
The fact that $\field[V] \otimes U(\m)$ is a unital bialgebra is an easy consequence of: 
1) $\Delta (\phi x) = \sum \phi\1 x\1 \otimes \phi\2 x\2$ for any $\phi \in U(\Lie(\m)_+)$ and $x \in \field[V]$, 
and 2) $\Delta(r(b,b')) = \sum r(b\1,b'\1) \otimes r(b\2,b'\2)$, $\Delta(s(b,b')) = \sum s(b\1, b'\1) \otimes s(b\2,b'\2)$. 
The space of primitive elements of $\field[V] \otimes U(\m)$ is $V \otimes \m$ 
because the space of primitive elements of $U(\m)$ (resp. $\field[V]$) is $\m$ (resp. $V$).

Therefore, we only have to check that $\m \subseteq \Nalt(\field[V]\otimes U(\m))$. 
To avoid a large number of parentheses we use again the symbol $\cdot$ to denote the product. 
For example, $xb\cdot x'b'$ represents the element $(xb)(x'b')$. 
We write the formula for the product on $\field[V] \otimes U(\m)$ as
\begin{displaymath}
	xb \cdot x'b' = \sum \left(r(b\1,b'\2)x \cdot s(b\2, b'\2)x' \right)\cdot b\3b'\3.
\end{displaymath}
We have to prove that
\begin{displaymath}
	(a,x'b',x''b'') = -(x'b',a,x''b'') = (x'b',x''b'',a)
\end{displaymath}
for any $x',x'' \in \field[V]$, $b',b'' \in U(\m)$ and $a \in \m$, where as usual $(x,y,z)$ denotes the associator. 
We only prove the first equality, leaving the rest to the reader.
We have
\begin{align*}
(&a ,x'b' ,x''b'') + (x'b',a,x''b'') \\
&= \sum \left( r(a\1 b'\1, b''\1)s(a\2,b'\2)x' \cdot s(a\3 b'\3, b''\2)x'' \right) \cdot (a\4 b'\4 )b''\3\\
&  \quad - \sum \left(s(a\1, b'\1 b''\1)r(b'\2, b''\2)x' \cdot s(a\2,b'\3 b''\3)s(b'\4, b''\4)x''\right) \cdot a\3(b'\5 b''\5)\\
&  \quad + \sum \left(r(b'\1 a\1, b''\1) r(b'\2, a\2)x' \cdot s(b'\3 a\3, b''\2)x''\right) \cdot (b'\4 a\4) b''\3 \\
&  \quad - \sum \left( r(b'\1 , a\1 b''\1)x' \cdot s(b'\2, a\2 b''\2)s(a\3, b''\3)x'' \right) \cdot b'\3(a\3 b''\4).
\end{align*}
Since $a$ is primitive we get $(a ,x'b', x''b'') + (x'b',a,x''b'') = \Sigma_1 - \Sigma_2 + \Sigma_3 - \Sigma_4$, where
\begin{align*}
\Sigma_1&:=  \sum \left( r(ab'\1, b''\1)s(1,b'\2)x' \cdot s(b'\3, b''\2)x'' \right) \cdot b'\4 b''\3 \\
& \quad + \sum \left( r(b'\1 , b''\1)s(a, b'\2) x' \cdot s(b'\3, b''\2)x'') \right) \cdot b'\4 b''\3 \\
& \quad + \sum \left( r(b'\1, b''\1)s(1,b'\2)x' \cdot s(ab'\3, b''\2)x''  \right) \cdot b'\4 b''\3 \\
& \quad + \sum \left( r(b'\1, b''\1)s(1, b'\2)x' \cdot s(b'\3, b''\2)x''   \right) \cdot (ab'\4)b''\3, \\[4pt]
\Sigma_2&:= \sum \left( s(a,b'\1 b''\1)r(b'\2,b''\2)x' \cdot s(1,b'\3 b''\3)s(b'\4, b''\4)x''\right) \cdot b'\5 b''\5 \\
& \quad + \sum \left( s(1, b'\1 b''\1 )r(b'\2, b''\2)x' \cdot s(a, b'\3 b''\3)s(b'\4, b''\4)x'' \right) \cdot b'\5 b''\5 \\
& \quad + \sum \left( s(1, b'\1 b''\1)r(b'\2, b''\2)x' \cdot s(1, b'\3 b''\3)s(b'\4, b''\4)x'' \right) \cdot a(b'\5 b''\5), \\[4pt]
\Sigma_3&:=  \sum \left( r(b'\1 a, b''\1)r(b'\2,1)x' \cdot s(b'\3, b''\2)x''  \right)\cdot b'\4 b''\3 \\
& \quad +  \sum \left( r(b'\1, b''\1)r(b'\2,a)x' \cdot s(b'\3, b''\2)x''  \right) \cdot b'\4 b''\3 \\
& \quad + \sum \left( r(b'\1, b''\1)r(b'\2,1)x' \cdot s(b'\3 a,b''\2)x''  \right) \cdot b'\4 b''\3 \\
& \quad + \sum \left( r(b'\1,b''\1)r(b'\2,1)x' \cdot s(b'\3,b''\2)x'' \right) \cdot (b'\3 a)b''\4 \\[4pt]
\Sigma_4&:=  \sum \left( r(b'\1, a b''\1)x' \cdot s(b'\2, b''\2)s(1,b''\3)x'' \right) \cdot b'\3 b''\4 \\
& \quad + \sum \left( r(b'\1,b''\1)x' \cdot s(b'\2, a b''\2)s(1,b''\3)x''  \right) \cdot b'\3 b''\4\\
& \quad + \sum \left( r(b'\1, b''\1)x' \cdot s(b'\2, b''\2)s(a, b''\3)x''  \right) \cdot b'\3 b''\4  \\
& \quad + \sum \left( r(b'\1,b''\1)x' \cdot s(b'\2,b''\2)s(1,b''\3)x'' \right) \cdot b'\3(a b''\4).
\end{align*}
Since $a$ belongs to the generalized alternative nucleus of $U(\m)$, the terms whose right factor include $a$ vanish. 
Cocommutativity of $\Delta$ allows us to write the right factor of the remaining terms as $b'\1 b''\1$, 
so we can ignore it when checking the vanishing of the sum.
We use ocommutativity of $\Delta$ again and reorder the reduced terms so either $r(b'\1,b''\1)x'$ or $s(b'\1,b''\1)x''$ is a common factor. 
Therefore, to prove that $(a ,x'b', x''b'') + (x'b',a,x''b'') = 0$ we only have to prove that
\begin{align} \label{eq:naltOne}
	S_1 & := r(ab',b'') + r(b'a,b'') -r(b',ab'')+ \sum  r(b'\1, b'') s(a, b'\2) \\
 		& \quad - s(a,b'\1 b''\1)r(b'\2,b''\2) +r(b'\1,b'')r(b'\2,a), \nonumber \\
	S_2 & := s(ab',b'') + s(b'a,b'') - s(b',ab'') \\
	    & \quad - \sum  s(a,b'\1 b''\1)s(b'\2, b''\2) + s(b',b''\1)s(a,b''\2)  = 0.  \nonumber
\end{align}
Since these equalities involve elements in $\U(\Lie(\m))$ we can use the definition of $r$ and $s$ in terms of $\lambda$ and $\rho$. 
First we prove (\ref{eq:naltOne}). After expanding it we get
\begin{align*}
S_1 &= \sum S(\rho_{b'\1 b''\1})\rho_{b''\2}\rho_{b'\2} \left( S(\rho_{ab'\3})\rho_{b'\4} + S(\rho_{b'\3}) \lambda_a \rho_{b'\4} \right)\\
&\quad  + S(\rho_{ab'\1 \cdot b''\1}) \rho_{b''\2}\rho_{b'\2} + S(\rho_{b'\1 b''\1}) \rho_{b''\2}\rho_{ab'\2}\\
&\quad - \left( S(\rho_{a\cdot b'\1 b''\1})\rho_{b'\2 b''\2} + S(\rho_{b'\1 b''\1})\lambda_a \rho_{b'\2 b''\2}\right) S(\rho_{b'\3 b''\3})\rho_{b''\4}\rho_{b'\4}\\
&\quad + S(\rho_{b'\1 b''\1})\rho_{b''\2}\rho_{b'\2} \left( S(\rho_{b'\3 a}) \rho_{b'\4} + S(\rho_{b'\3})\rho_a \rho_{b'\4}\right)\\
&\quad +  S(\rho_{b'\1 a\cdot b''\1 })\rho_{b''\2 }\rho_{b'\2} + S(\rho_{b'\1 b''\1})\rho_{b''\2}\rho_{b'\2 a} - S(\rho_{b'\1 \cdot a b''\1 })\rho_{b''\2}\rho_{b'\2} \\
&\quad - S(\rho_{b'\1 b''\1})\rho_{ab''\2}\rho_{b'\2}.
\end{align*}
Since $a$ belongs to the generalized alternative nucleus of $U(\m)$, the sum of the terms with a factor of the form $S(\rho_z)$ 
where $z$ is a product of all $a, b'$ and $b''$ vanish. 
The remaining terms have a common left factor $S(\rho_{b'\1 b''\1})$ which is not needed to check that $S_1 = 0$ and so we have to prove that
\begin{align*}
\sum \rho_{b''}\rho_{b'\1}&S(\rho_{ab'\2}) \rho_{b'\3} + \rho_{b''} \lambda_a \rho_{b'} + \rho_{b''}\rho_{ab'} - \lambda_a \rho_{b''}\rho_{b'}\\
 &+ \sum \rho_{b''\1}\rho_{b'\1}S(\rho_{b'\2 a})\rho_{b'\3} + \rho_{b''}\rho_a\rho_{b'} + \rho_{b''}\rho_{b'a}- \rho_{ab''}\rho_{b'} = 0.
\end{align*}
By Proposition \ref{prop:lambdaRecursive} we know that $\rho_{az+za} = \rho_a\rho_z + \rho_z \rho_a$, so after simplifying, 
it is equivalent to show that
\begin{displaymath}
	\rho_{b''} \lambda_a \rho_{b'} - \lambda_a \rho_{b''}\rho_{b'} + \rho_{b''}\rho_a\rho_{b'}- \rho_{ab''}\rho_{b'} = 0,
\end{displaymath}
which is an immediate consequence of Proposition \ref{prop:lambdaRecursive}.

We now prove that $S_2 = 0$. After expanding $S_2$ in terms of $\lambda_z$ and $\rho_z$ we get
\begin{align*}
S_2 &= \sum S(\rho_{ab'\1\cdot b''\1}) \lambda_{b'\2}\rho_{b''\2} + S(\rho_{b'\1 b''\1}) \lambda_{ab'\2}\rho_{b''\2} \\
&\quad - \left( S(\rho_{a\cdot b'\1 b''\1}) + S(\rho_{b'\1 b''\1})\lambda_a \right) \rho_{b'\2 b''\2}S(\rho_{b'\3 b''\3})\lambda_{b'\4}\rho_{b''\4}\\
&\quad +  S(\rho_{b'\1 a\cdot b''\1})\lambda_{b'\2}\rho_{b''\2} + S(\rho_{b'\1b''\1})\lambda_{b'\2 a}\rho_{b''\2} \\
&\quad - S(\rho_{b'\1 b''\1})\lambda_{b'\2}\rho_{b''\2} \left( S(\rho_{ab''\3})\rho_{b''\4} + S(\rho_{b''\3})\lambda_a\rho_{b''\4}\right)\\
&\quad - \left( S(\rho_{b'\1\cdot ab''\1}) \lambda_{b'\2}\rho_{b''\2} + S(\rho_{b'\1 b''\1})\lambda_{b'\2}\rho_{ab''\2}\right).
\end{align*}
Again, since $a$ belongs to the generalized alternative nucleus of $U(\m)$, the sum of the terms with a left factor of the form $S(\rho_z)$ 
where $z$ is a product of all $a, b'$ and $b''$ is zero. T
he remaining terms share a left factor $S(b'\1 b''\1)$ that can be omitted in our considerations. 
Thus, to get that $S_2 = 0$ we have to prove that
\begin{align*}
\lambda_{ab'} \rho_{b''} &- \lambda_a \lambda_{b'}\rho_{b''} + \lambda_{b'a}\rho_{b''} \\
&- \sum  \lambda_{b'}\rho_{b''\1}\left( S(\rho_{ab''\2})\rho_{b''\3} + S(\rho_{b''\2})\lambda_a\rho_{b''\3}  \right) - \lambda_{b'}\rho_{ab''}  = 0,
\end{align*}
which follows directly from Proposition \ref{prop:lambdaRecursive}.
\end{proof}

\begin{thrm}
	Let $F\colon \field[\m \times \m] \rightarrow \m$ be a formal Moufang loop. 
	The category of relative modules of $F$, the category of relative modules of $\m$ 
	and the category of modules of the Lie algebra $\Lie(\m)_+$ are equivalent.
\end{thrm}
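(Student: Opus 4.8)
The plan is to chain the equivalences already established, the only genuinely new point being to match \emph{relative} $U(\m)$-modules with $\Lie(\m)_+$-modules through the Hopf-algebra map of Theorem~\ref{thm:main}.

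First I would pass to bialgebras. Under the equivalence between formal loops and irreducible unital bialgebras, relative modules of $F\colon\field[\m\times\m]\to\m$ coincide with relative modules of the Moufang Hopf algebra $U(\m)$, and by Proposition~\ref{prop:Nalt} these are exactly the abelian groups $(A\stackrel{\pi}{\to}U(\m),\bp,\bm,0)$ in $\cB\downarrow U(\m)$ with $0(\m)\subseteq\Nalt(A)$. Theorem~\ref{thm:fundamental} already identifies all $U(\m)$-modules with unital left $\Mult^+_{U(\m)}$-modules via the mutually inverse functors $A\mapsto\Prim F(A)$ and $V\mapsto\field[V]\otimes U(\m)$, so what remains is to single out the $\Mult^+_{U(\m)}$-modules that correspond to the relative ones.

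Let $\psi\colon\Mult^+_{U(\m)}\to U(\Lie(\m)_+)$ be the homomorphism of Theorem~\ref{thm:main} and $\psi^{*}$ the induced restriction of scalars. I would check three points. (i) $\psi$ is surjective: its image is a subalgebra of $U(\Lie(\m))$ containing each $\ad_a=\lambda_a-\rho_a=s(a,1)=\psi(\sss(a,1))$ for $a\in\m$, hence the Lie subalgebra $\Lie(\m)_+$ these elements generate, hence all of $U(\Lie(\m)_+)$; consequently $\psi^{*}$ is fully faithful. (ii) If $V$ is a $U(\Lie(\m)_+)$-module, then by Theorem~\ref{thm:main} the multiplication that Theorem~\ref{thm:fundamental} puts on $A=\field[V]\otimes U(\m)$ is $\sum(r(b\1,b'\1)x)(s(b\2,b'\2)x')\otimes b\3b'\3$, and the preceding theorem shows $\m\cong 1\otimes\m\subseteq\Nalt(A)$; thus $\psi^{*}$ sends $U(\Lie(\m)_+)$-modules to relative $U(\m)$-modules. (iii) Conversely, for a relative module $A\to U(\m)$ the condition $0(\m)\subseteq\Nalt(A)$ forces the algebra map $\Mult_{U(\m)}\to\Endo(A)$, $\llambda_z\mapsto L_z$, $\rrho_z\mapsto R_z$, to factor through $U(\Lie(\m))$ — since left and right multiplications by elements of a generalized alternative nucleus satisfy the defining relations of $\Lie(\m)$ \cite{Morandi2001}, as already used in Section~\ref{subsect:relative_modules} — and its restriction to $\Mult^+_{U(\m)}$ then factors through $U(\Lie(\m)_+)$, so $\Prim F(A)$ is a $\psi^{*}$-pullback. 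Combining (i)--(iii), $\psi^{*}$ is an equivalence between $U(\Lie(\m)_+)$-modules and relative $U(\m)$-modules.

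Finally, $U(\Lie(\m)_+)$-modules are just $\Lie(\m)_+$-modules, and by the presentation of $\Lie(\m)_+$ by generators and relations (and Proposition~\ref{prop:isomorphism}, which removes any ambiguity in the notation $\Lie(\m)_+$) these are precisely the relative representations of $\m$. Composing the three equivalences yields the statement. The step needing the most care is (iii): one must be sure that the operators $L_a,R_a$ with $a$ in a generalized alternative nucleus really do satisfy the relations of $\Lie(\m)$, so that the $\Mult^+_{U(\m)}$-action genuinely descends to a $\Lie(\m)_+$-action; the remaining work is bookkeeping on top of Theorems~\ref{thm:fundamental} and~\ref{thm:main} and the immediately preceding theorem.
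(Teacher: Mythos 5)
Your proposal is correct and takes essentially the same route as the paper, which states this theorem as the direct composition of the loop--bialgebra equivalence, Proposition \ref{prop:Nalt}, Theorems \ref{thm:fundamental} and \ref{thm:main} (together with the factorization through $U(\Lie(\m)_+)$ noted in Section \ref{subsect:relative_modules}), the presentation of $\Lie(\m)_+$ and Proposition \ref{prop:isomorphism}, and the integration theorem immediately preceding it. Your points (i)--(iii) simply make explicit the functorial bookkeeping (surjectivity of $\psi$, essential surjectivity via the integration theorem, and the factorization for relative modules) that the paper leaves implicit.
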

%
%
\def\cprime{$'$}
\begin{bibdiv}
\begin{biblist}

\bib{Abe1980}{book}{
      author={Abe, E.},
       title={Hopf algebras},
      series={Cambridge Tracts in Mathematics},
   publisher={Cambridge University Press, Cambridge-New York},
        date={1980},
      volume={74},
        ISBN={0-521-22240-0},
        note={Translated from the Japanese by Hisae Kinoshita and Hiroko
  Tanaka},
}

\bib{Beck2003}{article}{
      author={Beck, J.~M.},
       title={Triples, algebras and cohomology},
        date={2003},
     journal={Repr. Theory Appl. Categ.},
      number={2},
       pages={1\ndash 59},
}

\bib{Benkart2013}{article}{
      author={Benkart, G.},
      author={Madariaga, S.a},
      author={P{\'e}rez-Izquierdo, J.~M.},
       title={Hopf algebras with triality},
        date={2013},
        ISSN={0002-9947},
     journal={Trans. Amer. Math. Soc.},
      volume={365},
      number={2},
       pages={1001\ndash 1023},
}

\bib{Carlsson1976}{article}{
      author={Carlsson, R.},
       title={Malcev-{M}oduln},
        date={1976},
        ISSN={0075-4102},
     journal={J. Reine Angew. Math.},
      volume={281},
       pages={199\ndash 210},
}

\bib{Dharwadker1995}{article}{
      author={Dharwadker, A.},
      author={Smith, J.~D.~H.},
       title={Split extensions and representations of {M}oufang loops},
        date={1995},
        ISSN={0092-7872},
     journal={Comm. Algebra},
      volume={23},
      number={11},
       pages={4245\ndash 4255},
}

\bib{Doro1978}{article}{
      author={Doro, S.},
       title={Simple {M}oufang loops},
        date={1978},
        ISSN={0305-0041},
     journal={Math. Proc. Cambridge Philos. Soc.},
      volume={83},
      number={3},
       pages={377\ndash 392},
}

\bib{Eilenberg1948}{article}{
      author={Eilenberg, S.},
       title={Extensions of general algebras},
        date={1948},
     journal={Ann. Soc. Polon. Math.},
      volume={21},
       pages={125\ndash 134},
}

\bib{Elduque1990}{article}{
      author={{Elduque}, A.},
       title={{On Malcev modules}},
        date={1990},
        ISSN={0092-7872; 1532-4125/e},
     journal={{Commun. Algebra}},
      volume={18},
      number={5},
       pages={1551\ndash 1561},
}

\bib{Elduque1995}{article}{
      author={{Elduque}, A.},
      author={{Shestakov}, I.~P.},
       title={{Irreducible non-Lie modules for Malcev superalgebras}},
        date={1995},
        ISSN={0021-8693},
     journal={{J. Algebra}},
      volume={173},
      number={3},
       pages={622\ndash 637},
}

\bib{Glauberman1968}{article}{
      author={Glauberman, G.},
       title={On loops of odd order. {II}},
        date={1968},
        ISSN={0021-8693},
     journal={J. Algebra},
      volume={8},
       pages={393\ndash 414},
}

\bib{Grishkov2006}{article}{
      author={Grishkov, A.~N.},
      author={Zavarnitsine, A.~V.},
       title={Groups with triality},
        date={2006},
        ISSN={0219-4988},
     journal={J. Algebra Appl.},
      volume={5},
      number={4},
       pages={441\ndash 463},
}

\bib{Hall2010}{article}{
      author={Hall, J.~I.},
       title={On {M}ikheev's construction of enveloping groups},
        date={2010},
        ISSN={0010-2628},
     journal={Comment. Math. Univ. Carolin.},
      volume={51},
      number={2},
       pages={245\ndash 252},
}

\bib{Klim2010}{article}{
      author={Klim, J.},
      author={Majid, S.},
       title={Bicrossproduct {H}opf quasigroups},
        date={2010},
        ISSN={0010-2628},
     journal={Comment. Math. Univ. Carolin.},
      volume={51},
      number={2},
       pages={287\ndash 304},
}

\bib{Kuzmin1968}{article}{
      author={Kuz{\cprime}min, E.~N.},
       title={Mal\cprime cev algebras and their representations},
        date={1968},
        ISSN={0373-9252},
     journal={Algebra i Logika},
      volume={7},
      number={4},
       pages={48\ndash 69},
}

\bib{Kuzmin2014}{article}{
      author={Kuzmin, E.~N.},
       title={Structure and representations of finite dimensional {M}alcev
  algebras},
        date={2014},
        ISSN={1561-2848},
     journal={Quasigroups Related Systems},
      volume={22},
      number={1},
       pages={97\ndash 132},
}

\bib{Kuzmin1972}{article}{
      author={{Kuz'min}, E.~N.},
       title={{On the relation between Mal'tsev algebras and analytic Moufang
  groups}},
        date={1972},
        ISSN={0002-5232; 1573-8302/e},
     journal={{Algebra Logic}},
      volume={10},
       pages={1\ndash 14},
}

\bib{Loginov1993}{article}{
      author={{Loginov}, E.~K.},
       title={{On linear representations of Moufang loops}},
        date={1993},
        ISSN={0092-7872; 1532-4125/e},
     journal={{Commun. Algebra}},
      volume={21},
      number={7},
       pages={2527\ndash 2536},
}

\bib{MacLane1998}{book}{
      author={Mac~Lane, S.},
       title={Categories for the working mathematician},
     edition={Second},
      series={Graduate Texts in Mathematics},
   publisher={Springer-Verlag, New York},
        date={1998},
      volume={5},
        ISBN={0-387-98403-8},
}

\bib{Maltsev1955}{article}{
      author={{Mal'tsev}, A.~I.},
       title={{Analytic loops}},
        date={1955},
     journal={{Mat. Sb., Nov. Ser.}},
      volume={36},
       pages={569\ndash 576},
}

\bib{Mikheev1993}{article}{
      author={Mikheev, P.~O.},
       title={Groups that envelop {M}oufang loops},
        date={1993},
        ISSN={0042-1316},
     journal={Uspekhi Mat. Nauk},
      volume={48},
      number={2(290)},
       pages={191\ndash 192},
}

\bib{Morandi2001}{article}{
      author={Morandi, P.~J.},
      author={P{\'e}rez-Izquierdo, J.~M.},
      author={Pumpl{\"u}n, S.},
       title={On the tensor product of composition algebras},
        date={2001},
        ISSN={0021-8693},
     journal={J. Algebra},
      volume={243},
      number={1},
       pages={41\ndash 68},
}

\bib{Mostovoy2010}{article}{
      author={Mostovoy, J.},
      author={P{\'e}rez-Izquierdo, J.~M.},
       title={Formal multiplications, bialgebras of distributions and
  nonassociative {L}ie theory},
        date={2010},
        ISSN={1083-4362},
     journal={Transform. Groups},
      volume={15},
      number={3},
       pages={625\ndash 653},
}

\bib{Paal1990}{article}{
      author={Paal, E.},
       title={On invariants of continuous {M}oufang transformations},
        date={1990},
        ISSN={0134-627X},
     journal={Eesti NSV Tead. Akad. F\"u\"us. Inst. Uurim.},
      number={66},
       pages={98\ndash 106},
        note={Quasigroups and nonassociative algebras in physics (Russian)
  (Tartu, 1989)},
}

\bib{Paal2008I}{article}{
      author={Paal, E.},
       title={Moufang symmetry I. Generalized Lie and Maurer-Cartan equations},
        date={2008},
      eprint={arXiv:0802.3471},
}

\bib{Paal2008II}{article}{
      author={Paal, E.},
       title={Moufang symmetry II. Moufang-Mal'tsev pairs and triality},
        date={2008},
      eprint={arXiv:0802.3517},
}

\bib{Paal2008III}{article}{
      author={Paal, E.},
       title={Moufang symmetry III. Integrability of generalized Lie
  equations},
        date={2008},
      eprint={arXiv:0802.3590},
}

\bib{Paal2008IV}{article}{
      author={Paal, E.},
       title={Moufang symmetry IV. Reductivity and hidden associativity},
        date={2008},
      eprint={arXiv:0802.3635},
}

\bib{Paal2008V}{article}{
      author={Paal, E.},
       title={Moufang symmetry V. Triple closure},
        date={2008},
      eprint={arXiv:0802.4007},
}

\bib{Paal2008VI}{article}{
      author={Paal, E.},
       title={Moufang symmetry VI. Reductivity and hidden associativity in
  mal'tsev algebras},
        date={2008},
      eprint={arXiv:0802.4139},
}

\bib{Paal2008VII}{article}{
      author={Paal, E.},
       title={Moufang symmetry VII. Moufang transformations},
        date={2008},
      eprint={arXiv:0803.0242}
}

\bib{Paal2008VIII}{article}{
      author={Paal, E.},
       title={Moufang symmetry VIII. Reconstruction of Moufang loops},
        date={2008},
      eprint={arXiv:0803.0243}
}

\bib{Paal2008X}{article}{
      author={Paal, E.},
       title={Moufang symmetry X. Generalized lie and Maurer-Cartan equations
  of continuous Moufang transformations},
        date={2008},
      eprint={arXiv:0803.0840}
}

\bib{Paal2008XI}{article}{
      author={Paal, E.},
       title={Moufang symmetry XI. Integrability of generalized Lie equations
  of continuous Moufang transformations},
        date={2008},
      eprint={arXiv:0802.3799}
}

\bib{Paal2008XII}{article}{
      author={Paal, E.},
       title={Moufang symmetry XII. Reductivity and hidden associativity of
  infinitesimal Moufang transformations},
        date={2008},
      eprint={arXiv:0802.3800}
}

\bib{Perez-Izquierdo2007}{article}{
      author={P{\'e}rez-Izquierdo, J.~M.},
       title={Algebras, hyperalgebras, nonassociative bialgebras and loops},
        date={2007},
        ISSN={0001-8708},
     journal={Adv. Math.},
      volume={208},
      number={2},
       pages={834\ndash 876}
}

\bib{Perez-Izquierdo2004}{article}{
      author={P{\'e}rez-Izquierdo, J.~M.},
      author={Shestakov, I.~P.},
       title={An envelope for {M}alcev algebras},
        date={2004},
        ISSN={0021-8693},
     journal={J. Algebra},
      volume={272},
      number={1},
       pages={379\ndash 393},
}

\bib{Pflugfelder1990}{book}{
      author={{Pflugfelder}, H.~O.},
       title={{Quasigroups and loops: introduction}},
   publisher={Berlin: Heldermann Verlag},
        date={1990},
        ISBN={3-88538-007-2},
}

\bib{Phillips2009}{article}{
      author={Phillips, J.~D.},
       title={The {M}oufang laws, global and local},
        date={2009},
        ISSN={0219-4988},
     journal={J. Algebra Appl.},
      volume={8},
      number={4},
       pages={477\ndash 492},
}

\bib{Sabinin1999}{book}{
      author={{Sabinin}, L.~V.},
       title={{Smooth quasigroups and loops}},
   publisher={Dordrecht: Kluwer Academic Publishers},
        date={1999},
        ISBN={0-7923-5920-8/hbk},
}

\bib{Sabinin1988}{article}{
      author={{Sabinin}, L.~V.},
      author={{Mikheev}, P.~O.},
       title={{On the infinitesimal theory of local analytic loops}},
        date={1988},
        ISSN={0197-6788},
     journal={{Sov. Math., Dokl.}},
      volume={36},
       pages={545\ndash 548},
}

\bib{Sagle1961}{article}{
      author={Sagle, A.~A.},
       title={Malcev algebras},
        date={1961},
        ISSN={0002-9947},
     journal={Trans. Amer. Math. Soc.},
      volume={101},
       pages={426\ndash 458},
}

\bib{Schafer1966}{book}{
      author={Schafer, R.~D.},
       title={An introduction to nonassociative algebras},
      series={Pure and Applied Mathematics, Vol. 22},
   publisher={Academic Press, New York-London},
        date={1966},
}

\bib{Shestakov2002}{article}{
      author={{Shestakov}, I.~P.},
      author={{Umirbaev}, U.~U.},
       title={{Free Akivis algebras, primitive elements, and hyperalgebras}},
        date={2002},
        ISSN={0021-8693},
     journal={{J. Algebra}},
      volume={250},
      number={2},
       pages={533\ndash 548},
}

\bib{Smith1986}{book}{
      author={Smith, J.~D.~H.},
       title={Representation theory of infinite groups and finite quasigroups},
      series={S\'eminaire de Math\'ematiques Sup\'erieures [Seminar on Higher
  Mathematics]},
   publisher={Presses de l'Universit\'e de Montr\'eal, Montreal, QC},
        date={1986},
      volume={101},
        ISBN={2-7606-0776-3},
}

\bib{Smith2007}{book}{
      author={Smith, J. D.~H.},
       title={An introduction to quasigroups and their representations},
      series={Studies in Advanced Mathematics},
   publisher={Chapman \& Hall/CRC, Boca Raton, FL},
        date={2007},
        ISBN={978-1-58488-537-5; 1-58488-537-8},
}

\bib{Sweedler1969}{book}{
      author={Sweedler, M.~E.},
       title={Hopf algebras},
      series={Mathematics Lecture Note Series},
   publisher={W. A. Benjamin, Inc., New York},
        date={1969},
}

\end{biblist}
\end{bibdiv}

\end{document}